\documentclass[12pt]{amsart}  
\usepackage{amssymb}
\usepackage{amscd}
\usepackage{multicol}

\textwidth 5.5in
%

\newtheorem{theorem}{Theorem}[section]
\newtheorem{corollary}[theorem]{Corollary}
\newtheorem{proposition}[theorem]{Proposition}
\newtheorem{lemma}[theorem]{Lemma}

\theoremstyle{remark}
\newtheorem{remark}[theorem]{Remark}
\newtheorem{example}[theorem]{Example}
\newtheorem*{notation}{Notation}

\theoremstyle{definition}
\newtheorem{definition}[theorem]{Definition}

\numberwithin{equation}{subsection}

\DeclareMathOperator{\Symm}{\mathtt{S}}

\DeclareMathOperator{\Map}{{\mathcal{H}\!\mathit{om}}}
\DeclareMathOperator{\Hom}{Hom}

\DeclareMathOperator{\shEnd}{\underline{End}}

\DeclareMathOperator{\im}{im}
\DeclareMathOperator{\coker}{coker}

\newcommand{\pr}{\mathtt{pr}}
\newcommand{\ev}{\mathtt{ev}}
\newcommand{\id}{\operatorname{id}}
\newcommand{\ip}{{\langle\ ,\ \rangle}}

\newcommand\OExt[1]{{\mathcal{O}[#1]\operatorname{Ext}}}

\newcommand\CExt[2]{\operatorname{\mathcal{C}Ext}({#1})_{#2}}

\newcommand{\DGMan}{\mathrm{dg}\mathfrak{Man}}
\DeclareMathOperator{\Cour}{\mathtt{Q}}
\DeclareMathOperator{\CA}{CA}
\DeclareMathOperator{\ECA}{ECA}
\DeclareMathOperator{\Conn}{Conn}

\newcommand\Mod[1]{{#1}\!-\!\mathtt{Mod}}
\newcommand\LA[1]{{#1}\!-\!\mathtt{LieAlgd}}

\begin{document}

\title{On higher-dimensional Courant algebroids}

\author{Paul Bressler}
\address{Universidad de los Andes}
\email{paul.bressler@gmail.com}
\author{Camilo Rengifo}
\address{Universidad de La Sabana}
\email{camilo.rengifo@unisabana.edu.co}

\begin{abstract}
We define the transgression functor which associates to a (higher-dimensional) Courant algebroid on a manifold a Lie algebroid on the shifted tangent bundle of the manifold.
\keywords{Courant algebroid \and Lie algebroid \and differential graded manifold}
\end{abstract}

\maketitle

\section{Introduction}
Courant algebroids (\cite{C}, \cite{D}, \cite{LWX}, \cite{B},
\cite{S1}, \cite{S3}), as well as higher-dimensional
generalizations thereof (\cite{Z}, \cite{V2}, \cite{CR}) are
objects of finite dimensional differential geometry built around
Leibniz (rather than Lie) brackets which give rise to
field-theoretic Poisson brackets. Leibniz brackets arise as
derived brackets (\cite{YKS1}, \cite{YKS2}) and it is natural to
expect Courant algebroids to arise in this way. Such a
construction was given by D.~Roytenberg in \cite{R} and
P.~\v{S}evera in \cite{S3}, Letter 7; (see also \cite{K-S},
\cite{U}, \cite{Z}).

We describe a construction, which we refer to as \emph{transgression}, which associates to a Courant algebroid $\mathcal{Q}$ on a manifold $X$ (see Definition \ref{defn: courant}) a Lie algebroid $\tau\mathcal{Q}$ on the differential graded manifold $X^\sharp$ (frequently denoted by $T[1]X$ in the literature) with the underlying space $X$ and the structure sheaf the de Rham complex of $X$. The Lie algebroid $\tau\mathcal{Q}$ possesses (and, hence, is uniquely determined by) a universal property with respect to suitably defined Courant-to-Lie morphisms (see Definition \ref{def: CtL morphism}) of $\mathcal{Q}$ to Lie algebroids on $X^\sharp$. The construction is given in Section \ref{section: transgression for courant algebroids} which is devoted to the proof of the main result of the paper (Theorem \ref{thm: Q/CtL has initial object} and Corollary \ref{cor: tau section}). Furthermore, in Theorem \ref{thm: ext to ext equiv} we show that for a special class of Courant algebroids (Courant extensions of Lie algebroids, see Definition \ref{def: courant extension}) transgression restricts to an equivalence of appropriately defined categories. The Courant algebroid $\mathcal{Q}$ appears as a graded component of $\tau\mathcal{Q}$ and is equipped with the structure (the Leibniz bracket in particular) derived from the Lie algebroid structure of the latter.

The present work generalizes and expands upon the results of \cite{CR} which treated the case of exact (higher dimensional) Courant algebroids.

In Section \ref{section: the odd path space} we review some basics
of the theory of differential-graded manifolds; (see also
\cite{R}, \cite{S1}, \cite{V1}). In particular, we show that, for
$X$ a manifold, the differential-graded manifold $X^\sharp$ is a
mapping space, a well-known fact, \cite{S2}.

In Section \ref{section: transgression for lie algebroids} we define the transgression functors which associate to Lie algebroids and modules over these on $X$ corresponding objects on $X^\sharp$. In addition we introduce \emph{marked Lie algebroids} examples of which are the Lie algebroids $\tau\mathcal{Q}$ arising from Courant algebroids via transgression.

In Section \ref{section: courant algebroids} we give the
definition of (higher-dimensional) Courant algebroids and study
basic properties thereof, including a review of the classification
of exact Courant algebroids. Most of the results are direct
generalizations of the corresponding results of \cite{B}.

Section \ref{section: transgression for courant algebroids} contains the key construction
of the present paper, namely, of transgression for Courant algebroids,
as well as the principal results, Theorem \ref{thm: Q/CtL has initial object}, Corollary \ref{cor: tau section}.

In Section \ref{section: transgression for extensions} we specialize to the case of Courant extensions
and show (Theorem \ref{thm: ext to ext equiv}) that transgression restricts to an equivalence of appropriately defined categories.

For the convenience of the reader those proofs which consist of straightforward but lengthy verification of identities are delegated to Section \ref{section: proofs}.

\subsection{Notation}
In order to simplify notations in numerous signs we will write ``$a$" instead of ``$\deg(a)$" in expressions appearing in exponents of $-1$.
For example, $(-1)^{ab - 1}$ stands for $(-1)^{\deg(a) \cdot \deg(b) - 1}$.

Throughout the paper ``manifold" means a $C^\infty$, real analytic or complex manifold.
For a manifold $X$ we denote by $\mathcal{O}_X$ the corresponding structure sheaf of \emph{complex valued} $C^\infty$, respectively analytic
or holomorphic functions. We denote by $\mathcal{T}_X$ (respectively, by $\Omega_X^k$) the sheaf of complex valued vector fields
(respectively, differential forms of degree $k$) on $X$.

\section{The odd path space}\label{section: the odd path space}

\subsection{DG-manifolds}
For the purposes of the present note a \emph{differential-graded manifold (DG-manifold)} is a pair
$\mathfrak{X} := (X, \mathcal{O}_\mathfrak{X})$, where $X$ is a manifold and $\mathcal{O}_\mathfrak{X}$
is a sheaf of commutative differential-graded algebras (CDGA) on $X$ locally isomorphic to one of the form
$\mathcal{O}_X\otimes\Symm(E)$, where $\Symm(E)$ is the symmetric algebra of a finite-dimensional graded vector space $E$.

Let $\mathfrak{X} = (X, \mathcal{O}_\mathfrak{X})$ and
$\mathfrak{Y} = (Y, \mathcal{O}_\mathfrak{Y})$ be DG-manifolds. A
morphism $\phi\colon \mathfrak{X} \to \mathfrak{Y}$ is a morphism
of ringed spaces, which is to say a map $\phi\colon X \to Y$ of
manifolds together with the morphism of graded algebras
$\phi^*\colon \phi^{-1}\mathcal{O}_\mathfrak{Y} \to
\mathcal{O}_\mathfrak{X}$ compatible with the map
$\phi^{-1}\mathcal{O}_Y \to \mathcal{O}_X$.

We denote the category of DG-manifolds by $\DGMan$. Let $\DGMan^+$
denote the full subcategory of DG-manifolds $\mathfrak{X} = (X, \mathcal{O}_\mathfrak{X})$ such that $\mathcal{O}_\mathfrak{X}^i = 0$ if $i < N$ for some $N\in\mathbb{Z}$.

\begin{example}\label{example: shifted T}
An ordinary manifold is an example of a DG-manifold with the
structure sheaf concentrated in degree zero. Each ordinary
manifold $X$ determines a DG-manifold $X^\sharp \in \DGMan^+$
defined by $X^\sharp = (X,\Omega^\bullet_X, d)$ and frequently denoted by $T[1]X$ in the literature. There is a
canonical morphism $X \to X^\sharp$ of DG-manifolds defined by the
canonical map $\Omega^\bullet_X \to \mathcal{O}_X$.
\end{example}
\begin{example}\label{example: odd line}
Let $\vec{\mathfrak{t}}$ denote the DG-manifold with the
underlying space  consisting of one point and the DG-algebra of
functions $\mathcal{O}_{\vec{\mathfrak{t}}} =
\mathbb{C}[\epsilon]$, the free graded commutative algebra with
generator $\epsilon$ of degree $-1$ and the differential
$\partial_\epsilon \colon \epsilon \mapsto 1$. Note that
$\vec{\mathfrak{t}} \in \DGMan^+$.
\end{example}

\subsection{Products}
The category $\DGMan^+$ has finite products. Suppose that
$\mathfrak{X} = (X, \mathcal{O}_\mathfrak{X}), \mathfrak{Y} = (Y,
\mathcal{O}_\mathfrak{Y}) \in \DGMan^+$.

If $\mathcal{O}_\mathfrak{X} \cong \mathcal{O}_X\otimes\Symm(E)$
and $\mathcal{O}_\mathfrak{Y} \cong \mathcal{O}_X\otimes\Symm(F)$,
where $E$ and $F$ are finite-dimensional graded vector spaces,
then
\[
\mathcal{O}_{X\times Y}\otimes_{\mathcal{O}_X\boxtimes\mathcal{O}_Y} \mathcal{O}_\mathfrak{X}\boxtimes\mathcal{O}_\mathfrak{Y} \cong \mathcal{O}_{X\times Y}\otimes \Symm(E\oplus F)
\]
and is bounded below.

Let
\[
\mathcal{O}_{\mathfrak{X}\times\mathfrak{Y}} := \mathcal{O}_{X\times Y}\otimes_{\mathcal{O}_X\boxtimes\mathcal{O}_Y} \mathcal{O}_\mathfrak{X}\boxtimes\mathcal{O}_\mathfrak{Y} .
\]
Then, $(X\times Y, \mathcal{O}_{\mathfrak{X}\times\mathfrak{Y}})  \in \DGMan^+$.

The morphism $\pr_\mathfrak{X} \colon
\mathfrak{X}\times\mathfrak{Y} \to \mathfrak{X}$ is defined as the
projection $\pr_X \colon X\times Y \to X$ together with the map
$\pr_\mathfrak{X}^* \colon \pr_X^{-1}\mathcal{O}_\mathfrak{X} \to
\mathcal{O}_{\mathfrak{X}\times\mathfrak{Y}}$ defined as the
composition
\[
\pr_X^{-1}\mathcal{O}_\mathfrak{X} \xrightarrow{\id\boxtimes 1} \mathcal{O}_\mathfrak{X}\boxtimes\mathcal{O}_\mathfrak{Y} \xrightarrow{1\boxtimes \id} \mathcal{O}_{\mathfrak{X}\times\mathfrak{Y}} .
\]
The morphism $\pr_\mathfrak{Y} \colon \mathfrak{X}\times\mathfrak{Y} \to \mathfrak{Y}$ is defined similarly.

\begin{lemma}
The DG-manifold $(X\times Y,
\mathcal{O}_{\mathfrak{X}\times\mathfrak{Y}}) \in \DGMan^+$
together with the morphisms $\pr_\mathfrak{X}$ and
$\pr_\mathfrak{Y}$ represents the product of $\mathfrak{X}$ and
$\mathfrak{Y}$.
\end{lemma}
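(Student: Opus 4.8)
The plan is to verify the defining universal property of the product directly: for every $\mathfrak{Z} = (Z,\mathcal{O}_\mathfrak{Z})\in\DGMan^+$ the map induced by post-composition with $\pr_\mathfrak{X}$ and $\pr_\mathfrak{Y}$,
\[
\Hom_{\DGMan^+}(\mathfrak{Z},\mathfrak{X}\times\mathfrak{Y}) \longrightarrow \Hom_{\DGMan^+}(\mathfrak{Z},\mathfrak{X})\times\Hom_{\DGMan^+}(\mathfrak{Z},\mathfrak{Y}),
\]
is a bijection. Since it has already been observed that $(X\times Y,\mathcal{O}_{\mathfrak{X}\times\mathfrak{Y}})$ lies in $\DGMan^+$, this is all that remains.

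First I would dispose of the underlying spaces. The first datum of a morphism of DG-manifolds is a map of underlying manifolds, and $X\times Y$ is the categorical product in the category of manifolds; thus a map $f\colon Z\to X\times Y$ is the same as the pair $(f_X,f_Y):=(\pr_X\circ f,\pr_Y\circ f)$, compatibly with the projections. It therefore suffices, having fixed such an $f$, to establish a bijection between morphisms of graded algebras $f^{-1}\mathcal{O}_{\mathfrak{X}\times\mathfrak{Y}}\to\mathcal{O}_\mathfrak{Z}$ compatible with $f^{-1}\mathcal{O}_{X\times Y}\to\mathcal{O}_Z$, on the one hand, and pairs consisting of a morphism $f_X^{-1}\mathcal{O}_\mathfrak{X}\to\mathcal{O}_\mathfrak{Z}$ over $f_X$ and a morphism $f_Y^{-1}\mathcal{O}_\mathfrak{Y}\to\mathcal{O}_\mathfrak{Z}$ over $f_Y$, on the other.

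The heart of the matter is the universal property of the tensor product. I would first rewrite, using the $(f^{-1},f_*)$ adjunction, the set of morphisms $f^{-1}\mathcal{O}_{\mathfrak{X}\times\mathfrak{Y}}\to\mathcal{O}_\mathfrak{Z}$ over the base as the set of morphisms of commutative graded $\mathcal{O}_{X\times Y}$-algebras $\mathcal{O}_{\mathfrak{X}\times\mathfrak{Y}}\to f_*\mathcal{O}_\mathfrak{Z}$, where $f_*\mathcal{O}_\mathfrak{Z}$ carries its canonical $\mathcal{O}_{X\times Y}$-algebra structure. By construction $\mathcal{O}_{\mathfrak{X}\times\mathfrak{Y}} = \mathcal{O}_{X\times Y}\otimes_{\mathcal{O}_X\boxtimes\mathcal{O}_Y}\mathcal{O}_\mathfrak{X}\boxtimes\mathcal{O}_\mathfrak{Y}$ is precisely the coproduct of the scalar extensions of $\pr_X^{-1}\mathcal{O}_\mathfrak{X}$ and $\pr_Y^{-1}\mathcal{O}_\mathfrak{Y}$ in commutative graded $\mathcal{O}_{X\times Y}$-algebras, with coproduct inclusions $\pr_\mathfrak{X}^*$ and $\pr_\mathfrak{Y}^*$; a morphism out of a coproduct is the same as a compatible pair of morphisms out of the factors. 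Adjoint transposition once more, together with $\pr_X\circ f=f_X$ and $\pr_Y\circ f=f_Y$, turns each factor map $\pr_X^{-1}\mathcal{O}_\mathfrak{X}\to f_*\mathcal{O}_\mathfrak{Z}$ into the desired morphism $f_X^{-1}\mathcal{O}_\mathfrak{X}\to\mathcal{O}_\mathfrak{Z}$ over $f_X$, and symmetrically for $Y$. Chasing the identifications shows that the two factor maps are exactly the components obtained by composing with $\pr_\mathfrak{X}$ and $\pr_\mathfrak{Y}$, which gives the asserted bijection.

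It remains to observe that nothing is lost upon passing to the differential: the inclusions $\pr_\mathfrak{X}^*,\pr_\mathfrak{Y}^*$ are maps of CDGAs, and the differential on $\mathcal{O}_{\mathfrak{X}\times\mathfrak{Y}}$ is the unique derivation extending the two given differentials by the Leibniz rule, so a graded-algebra morphism out of the tensor product commutes with the differential precisely when both of its restrictions do; hence the bijection above restricts to those morphisms respecting the differential. The only real obstacle is bookkeeping: one must check carefully, at the level of sheaves rather than stalks, both that the relative external tensor product is genuinely the coproduct in $\mathcal{O}_{X\times Y}$-algebras (using the local descriptions $\mathcal{O}_\mathfrak{X}\cong\mathcal{O}_X\otimes\Symm(E)$, $\mathcal{O}_\mathfrak{Y}\cong\mathcal{O}_Y\otimes\Symm(F)$ together with $\Symm(E\oplus F)\cong\Symm(E)\otimes\Symm(F)$), and that all the ``over the base'' compatibilities survive the two adjoint transpositions, so that the resulting bijection is indeed the one induced by $\pr_\mathfrak{X}$ and $\pr_\mathfrak{Y}$.
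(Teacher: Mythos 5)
The paper gives no argument here (its proof reads ``Left to the reader''), so there is nothing to compare against; your write-up supplies exactly the verification the authors intend. The argument is correct: reducing along the product of underlying manifolds, identifying $\mathcal{O}_{X\times Y}\otimes_{\mathcal{O}_X\boxtimes\mathcal{O}_Y}\mathcal{O}_\mathfrak{X}\boxtimes\mathcal{O}_\mathfrak{Y}$ as the coproduct of the scalar extensions in commutative graded $\mathcal{O}_{X\times Y}$-algebras via the $(f^{-1},f_*)$ adjunction, and then observing that compatibility with the differential is detected on the two generating factors, is the standard and complete route.
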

\begin{proof}
Left to the reader.
\end{proof}

\subsection{The odd path space}\label{subsection: the odd path space}
We briefly recall the basic facts concerning the internal hom in
presheaves. For a category $\mathcal{C}$ we denote by
$\widehat{\mathcal{C}}$ the category of presheaves on
$\mathcal{C}$. For $F, G \in \widehat{\mathcal{C}}$ the presheaf
$\Map(F,G)$ is defined by
\[
\Map(F,G)(c) = \Hom_{\widehat{\mathcal{C}}}(c,\Map(F,G)) := \Hom_{\widehat{\mathcal{C}}}(c\times F, G)
\]
for $c\in\mathcal{C}$. Then, for any $H \in \widehat{\mathcal{C}}$,
\[
\Hom_{\widehat{\mathcal{C}}}(H,\Map(F,G)) \cong \Hom_{\widehat{\mathcal{C}}}(H\times F, G)
\]
naturally in $F$, $G$ and $H$. The case $H=\Map(F,G)$ and
$\id_{\Map(F,G)}$ gives rise to the \emph{evaluation map}
$\ev\colon \Map(F,G)\times F \to G$. The case $H = G\times F$ and
the projection $\pr \colon G\times F \to G$ gives rise to the map
$i \colon G \to \Map(F,G)$. In other words, internal hom is
defined as the right adjoint to the product and the maps $\ev$ and
$i$ are the counit and the unit of the adjunction.

\begin{theorem}\label{thm: odd path space representable}
For a manifold $X$ the DG manifold $X^\sharp$ (see Example \ref{example: shifted T}) represents the presheaf $\Map(\vec{\mathfrak{t}}, X)$ on $\DGMan^+$.
\end{theorem}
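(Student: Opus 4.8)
The plan is to prove representability by producing, for every $\mathfrak{Z} = (Z, \mathcal{O}_\mathfrak{Z}) \in \DGMan^+$, a bijection
\[
\Hom_{\DGMan^+}(\mathfrak{Z}, X^\sharp) \;\cong\; \Hom_{\DGMan^+}(\mathfrak{Z}\times\vec{\mathfrak{t}}, X) \;=\; \Map(\vec{\mathfrak{t}}, X)(\mathfrak{Z}),
\]
natural in $\mathfrak{Z}$; by the Yoneda lemma this exhibits $X^\sharp$ as a representing object. The strategy is to show that \emph{both} sides reduce to the same elementary datum: a morphism of ringed spaces consisting of a smooth map $g\colon Z \to X$ together with a homomorphism of sheaves of algebras $g^{-1}\mathcal{O}_X \to \mathcal{O}_\mathfrak{Z}^0$ (into the degree-zero part, with \emph{no} compatibility with the differential required). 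Since $\vec{\mathfrak{t}}$ has a one-point underlying space, the underlying map on both sides is just $g\colon Z \to X$, so I only need to analyze the algebra data.

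For the left-hand side I would invoke the universal property of the de Rham complex: $\Omega^\bullet_X$ with $d$ is the free CDGA generated by $\mathcal{O}_X$ placed in degree $0$. Concretely, given an algebra homomorphism $u\colon g^{-1}\mathcal{O}_X \to \mathcal{O}_\mathfrak{Z}^0$ one defines a CDGA morphism $\Omega^\bullet_X \to \mathcal{O}_\mathfrak{Z}$ by $f \mapsto u(f)$ and $df \mapsto d_\mathfrak{Z} u(f)$, extended multiplicatively; the Leibniz rule for $d_\mathfrak{Z}$ and graded-commutativity of the odd elements $d_\mathfrak{Z}u(f)$ guarantee well-definedness, while $d_\mathfrak{Z}^2 = 0$ gives automatic compatibility with the differential. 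Conversely any CDGA morphism restricts on $\Omega^0_X = \mathcal{O}_X$ to such a $u$, and these two operations are mutually inverse. Hence $\Hom(\mathfrak{Z}, X^\sharp)$ is identified with the set of pairs $(g, u)$ as above.

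For the right-hand side I would first note $\mathfrak{Z}\times\vec{\mathfrak{t}} \in \DGMan^+$ has underlying space $Z$ and structure sheaf $\mathcal{O}_\mathfrak{Z}\otimes\mathbb{C}[\epsilon]$ with total differential $d_\mathfrak{Z}\otimes 1 + (-1)^{\bullet}\otimes\partial_\epsilon$. Because $X$ is an ordinary manifold with $d_X = 0$, a morphism to $X$ is a pair $(g, \phi^*)$ where $\phi^*\colon g^{-1}\mathcal{O}_X \to (\mathcal{O}_\mathfrak{Z}\otimes\mathbb{C}[\epsilon])^0 = \mathcal{O}_\mathfrak{Z}^0 \oplus \mathcal{O}_\mathfrak{Z}^1\epsilon$ is a degree-zero algebra map satisfying $d_{\mathrm{tot}}\phi^*(f) = 0$. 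Writing $\phi^*(f) = \phi_0(f) + \psi(f)\epsilon$, the multiplicativity of $\phi^*$ forces $\phi_0$ to be an algebra homomorphism into $\mathcal{O}_\mathfrak{Z}^0$ and $\psi$ to be a $\phi_0$-derivation into $\mathcal{O}_\mathfrak{Z}^1$, while the cocycle condition computes out (using $\partial_\epsilon\epsilon = 1$ and the Koszul sign) to $\psi(f) = d_\mathfrak{Z}\phi_0(f)$ together with $d_\mathfrak{Z}\psi(f) = 0$; the latter holds automatically once the former does. Thus such morphisms are in bijection with pairs $(g, \phi_0)$, i.e.\ exactly the same datum found on the left-hand side.

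Matching $u$ with $\phi_0$ then gives the desired bijection, and naturality in $\mathfrak{Z}$ is clear since both identifications are constructed from the same underlying degree-zero algebra map and commute with pullback along morphisms $\mathfrak{Z}' \to \mathfrak{Z}$. The step I expect to require the most care is the right-hand computation: keeping the degrees of $\epsilon$ and the Koszul signs in the total differential straight, so as to see unambiguously that the $\epsilon$-component $\psi$ is forced to equal $d_\mathfrak{Z}\circ\phi_0$ and contributes no independent freedom. A secondary point needing a clean statement is the free-CDGA (de Rham) universal property at the level of sheaves, compatible with the underlying map $g$; both are routine but are where the argument must be pinned down precisely.
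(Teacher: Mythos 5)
Your argument is correct and is essentially the paper's own proof, merely reorganized: the paper constructs the two mutually inverse maps $\phi\mapsto\phi^\sharp$ and $\psi\mapsto\psi_\flat$ directly, using exactly your two key observations — that closedness of $\phi^*(f)=\phi_0(f)+\psi(f)\epsilon$ forces $\psi=d_{\mathfrak Z}\circ\phi_0$, and that a $\phi_0$-derivation factors through $\Omega^1_X$ and then extends uniquely to a DGA map out of $\Omega^\bullet_X$. Your repackaging of both sides as the common datum $(g,\phi_0)$ is a harmless stylistic variant, so no further comparison is needed.
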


Implicit in the statement of the theorem are natural bijections
\begin{equation}\label{Map to sharp}
\Map(\vec{\mathfrak{t}}, X)(\mathfrak{Y}) = \Hom(\mathfrak{Y}\times\vec{\mathfrak{t}}, X) \xrightarrow{\phi\mapsto\phi^\sharp} \Hom(\mathfrak{Y}, X^\sharp) .
\end{equation}
and
\begin{equation}\label{sharp to Map}
\Hom(\mathfrak{Y}, X^\sharp) \xrightarrow{\psi\mapsto\psi_\flat} \Hom(\mathfrak{Y}\times\vec{\mathfrak{t}}, X) = \Map(\vec{\mathfrak{t}}, X)(\mathfrak{Y}) .
\end{equation}
which we are going to define presently.

Let $\phi \colon \mathfrak{Y}\times \vec{\mathfrak{t}} \to X$ be a
morphism of DG manifolds. The morphism $\phi$ is given by a map of
manifolds $\phi \colon Y \to X$ and a morphism of DGA $\phi^*
\colon \phi^{-1}\mathcal{O}_X \to
\mathcal{O}_\mathfrak{Y}[\epsilon]$. The only non-trivial
component of the latter is $\phi^* \colon \phi^{-1}\mathcal{O}_X
\to \mathcal{O}_\mathfrak{Y}^0 \oplus
\mathcal{O}_\mathfrak{Y}^1\cdot\epsilon$ which we will write as
$\phi^*(f) = \phi^*_0(f) + \phi^*_1(f)\cdot\epsilon$ for
$f\in\mathcal{O}_X$.

Since $\mathcal{O}_X$ is concentrated in degree zero it follows
that $\partial_{\mathfrak{Y}\times\vec{\mathfrak{t}}}\circ\phi^* =
0$ which implies that $\partial_\mathfrak{Y}\phi^*_0(f) =
\phi^*_1(f)$ for $f\in\mathcal{O}_X$. Thus, $\phi^*_1$ is a
derivation, hence give rise to the $\mathcal{O}_X$-linear (in the
sense of $\phi^*_0$) map $\overline{\phi^*_1} \colon
\phi^{-1}\Omega^1_X \to \mathcal{O}_\mathfrak{Y}^1$ making the
diagram
\[
\begin{CD}
\phi^{-1}\Omega^1_X @>{\overline{\phi^*_1}}>> \mathcal{O}_\mathfrak{Y}^1 \\
@A{d}AA @AA{\partial_\mathfrak{Y}}A \\
\phi^{-1}\mathcal{O}_X @>{\phi^*_0}>> \mathcal{O}_\mathfrak{Y}^0
\end{CD}
\]
commutative. Therefore, there exists a unique map of DGA
\[
(\phi^\sharp)^* \colon \phi^{-1}\mathcal{O}_{X^\sharp} = \phi^{-1}\Omega^\bullet_X \to \mathcal{O}_\mathfrak{Y}
\]
whose components in degrees zero and one are equal, respectively,
to $\phi^*_0$ and $\overline{\phi^*_1}$. The pair of maps $\phi
\colon Y \to X$ and $(\phi^\sharp)^*$ define a morphism of DG
manifolds $\phi^\sharp \colon \mathfrak{Y} \to X^\sharp$. The
assignment $\phi \mapsto \phi^\sharp$ defines the map \eqref{Map
to sharp}.

Suppose that $\psi \colon \mathfrak{Y} \to X^\sharp$ is a morphism
of DG manifolds given by a map of manifolds $\psi \colon Y \to X$
and a morphism of DGA $\psi^* \colon
\psi^{-1}\mathcal{O}_{X^\sharp} \to \mathcal{O}_\mathfrak{Y}$. Let
\begin{equation}\label{flat pull-back}
(\psi_\flat)^* \colon \psi^{-1}\mathcal{O}_X \to \mathcal{O}_\mathfrak{Y}[\epsilon]
\end{equation}
denote the map whose only non-trivial component in degree zero is given by $(\psi_\flat)^*(f) = \psi^{*0}(f) + \psi^{*1}(df)\cdot\epsilon$.

\begin{lemma}
The map \eqref{flat pull-back} is a morphism of DGA.
\end{lemma}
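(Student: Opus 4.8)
The plan is to verify, in order, that $(\psi_\flat)^*$ preserves the grading, is a morphism of unital graded algebras, and is compatible with the differentials. Throughout I will use that $\psi^*$ is itself a morphism of DGA, which supplies two families of identities: the multiplicativity of its homogeneous components $\psi^{*k}\colon \psi^{-1}\Omega^k_X \to \mathcal{O}^k_\mathfrak{Y}$, and the compatibility relations $\partial_\mathfrak{Y}\circ\psi^{*k} = \psi^{*,k+1}\circ d$ obtained by comparing $\partial_\mathfrak{Y}\circ\psi^*$ with $\psi^*\circ d$ in each degree. The grading claim is immediate: $\mathcal{O}_X$ sits in degree $0$, and $\psi^{*0}(f)$ has degree $0$ while $\psi^{*1}(df)\cdot\epsilon$ has degree $1+(-1)=0$, so $(\psi_\flat)^*$ lands in $(\mathcal{O}_\mathfrak{Y}[\epsilon])^0$; unitality follows from $\psi^{*0}(1)=1$ and $d1=0$.

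For multiplicativity I would expand $(\psi_\flat)^*(f)\cdot(\psi_\flat)^*(g)$ in $\mathcal{O}_\mathfrak{Y}[\epsilon]$. The only delicate point is the sign bookkeeping when moving $\epsilon$ past homogeneous elements via graded commutativity: the cross term $\psi^{*1}(df)\,\epsilon\,\psi^{*0}(g)$ collapses to $\psi^{*0}(g)\psi^{*1}(df)\cdot\epsilon$ with no sign, since $\psi^{*0}(g)$ is even, while the top term carries $\epsilon^2=0$ and vanishes. This leaves $\psi^{*0}(f)\psi^{*0}(g) + \bigl(\psi^{*0}(f)\psi^{*1}(dg)+\psi^{*0}(g)\psi^{*1}(df)\bigr)\epsilon$. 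On the other side, $\psi^{*0}(fg)=\psi^{*0}(f)\psi^{*0}(g)$, and using the de Rham Leibniz rule $d(fg)=f\,dg+g\,df$ together with the multiplicativity of $\psi^*$ on the products $\Omega^0_X\cdot\Omega^1_X$ one gets $\psi^{*1}(d(fg))=\psi^{*0}(f)\psi^{*1}(dg)+\psi^{*0}(g)\psi^{*1}(df)$; the two expressions agree.

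Finally, since $\mathcal{O}_X$ is concentrated in degree $0$ with zero differential, compatibility with differentials amounts to showing $\partial_{\mathfrak{Y}\times\vec{\mathfrak{t}}}\bigl((\psi_\flat)^*(f)\bigr)=0$. The crucial input is the action of the product differential on the odd generator, $\partial_{\mathfrak{Y}\times\vec{\mathfrak{t}}}(b\,\epsilon)=(\partial_\mathfrak{Y}b)\,\epsilon + (-1)^{|b|}b$, reflecting $\partial_\epsilon\epsilon=1$. Applying this to $(\psi_\flat)^*(f)=\psi^{*0}(f)+\psi^{*1}(df)\,\epsilon$ produces $\bigl(\partial_\mathfrak{Y}\psi^{*0}(f)-\psi^{*1}(df)\bigr)+\bigl(\partial_\mathfrak{Y}\psi^{*1}(df)\bigr)\epsilon$. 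The degree-$0$ relation $\partial_\mathfrak{Y}\psi^{*0}(f)=\psi^{*1}(df)$ cancels the first bracket, and the degree-$1$ relation together with $d(df)=0$ gives $\partial_\mathfrak{Y}\psi^{*1}(df)=\psi^{*2}(d\,df)=0$, killing the $\epsilon$-term.

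I expect the only genuine obstacle to be the sign discipline: correctly deriving $\partial(b\epsilon)=(\partial_\mathfrak{Y}b)\epsilon+(-1)^{|b|}b$ from the tensor-product differential and the odd degree of $\epsilon$, and confirming that the lone sign appearing in the multiplicativity computation is exactly the one that symmetrizes the cross terms. Once these are pinned down, the entire statement reduces to the de Rham identity $d^2=0$ and the two compatibility relations packaged in the assertion that $\psi^*$ is a morphism of DGA.
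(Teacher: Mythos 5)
Your proposal is correct and follows essentially the same route as the paper: a direct check that $(\psi_\flat)^*$ is multiplicative (via $d(fg)=f\,dg+g\,df$ and the multiplicativity of $\psi^*$) and commutes with the differentials (via $\partial_\mathfrak{Y}\psi^{*0}(f)=\psi^{*1}(df)$, $\partial_\epsilon\epsilon=1$, and $d^2=0$). The extra attention you give to the grading and sign bookkeeping is consistent with, and slightly more explicit than, the paper's computation.
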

\begin{proof}
The map $(\psi_\flat)^*$ is a map of complexes since
\begin{eqnarray*}
\partial_{\mathfrak{Y}\times\vec{\mathfrak{t}}}(\psi^{*0}(f) + \psi^{*1}(df)\cdot\epsilon)
& = & (\partial_{\mathfrak{Y}}\pm\partial_{\epsilon})(\psi^{*0}(f) + \psi^{*1}(df)\cdot\epsilon) \\
& =  & \partial_{\mathfrak{Y}}(\psi^{*0}(f) +
\psi^{*1}(df)\cdot\epsilon) -
\psi^{*1}(df)\cdot\partial_{\epsilon}\epsilon \\
& = & \psi^{*1}(df)-\psi^{*1}(df) \\
& = & 0
\end{eqnarray*}
and a map of algebras since
\begin{eqnarray*}
(\psi_\flat)^*(fg) & =  & \psi^{*0}(fg)+\psi^{*1}(d(fg))\cdot\epsilon
\\
& =  & \psi^{*0}(f)\psi^{*0}(g) + \psi^{*1}(fdg+gdf)\cdot\epsilon \\
& =  & \psi^{*0}(f)\psi^{*0}(g) + \psi^{*1}(fdg)\cdot\epsilon + \psi^{*1}(gdf)\cdot\epsilon \\
& =  & \psi^{*0}(f)\psi^{*0}(g) +
\psi^{*0}(f)\psi^{*1}(dg)\cdot\epsilon +
\psi^{*0}(g)\psi^{*1}(df)\cdot\epsilon \\
& =  & (\psi_{\flat}^{*}f)(\psi_{\flat}^{*}g).
\end{eqnarray*}
\end{proof}

It follows from the lemma above that the pair of maps $\psi \colon
\mathfrak{Y} \to X$ and $(\psi_\flat)^*$ define a morphism of DG
manifolds $\psi_\flat \colon \mathfrak{Y}\times \vec{\mathfrak{t}}
\to X$. The assignment $\psi \mapsto \psi_\flat$ defines the map
\eqref{sharp to Map}.

\begin{proof}[Proof of Theorem \ref{thm: odd path space representable}]
We leave it to the reader to verify that \eqref{Map to sharp} and
\eqref{sharp to Map} are mutually inverse and natural in the
variable $\mathfrak{Y}$. i.e. define an isomorphism of presheaves
$\Map(\vec{\mathfrak{t}}, X) \cong X^\sharp$.
\end{proof}

Let
\begin{equation}\label{eval for odd paths}
\ev^* \colon \mathcal{O}_X \to \mathcal{O}_{X^\sharp}[\epsilon] := \mathcal{O}_{X^\sharp\times \vec{\mathfrak{t}}}
\end{equation}
denote the map defined by $f \mapsto f + df\cdot\epsilon$.

\begin{lemma}
The map \eqref{eval for odd paths} is a morphism of DG-algebras.
\end{lemma}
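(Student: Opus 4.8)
The plan is to recognize that $\ev^*$ is a special instance of the map \eqref{flat pull-back} already analyzed, so that no genuinely new verification is needed. Concretely, take $\mathfrak{Y} = X^\sharp$ and $\psi = \id_{X^\sharp}$; then $\psi^{*0}$ and $\psi^{*1}$ are the identity on $\Omega^0_X$ and $\Omega^1_X$ respectively, and the defining formula $(\psi_\flat)^*(f) = \psi^{*0}(f) + \psi^{*1}(df)\cdot\epsilon$ becomes $f + df\cdot\epsilon$, which is exactly \eqref{eval for odd paths}. Equivalently, under the bijection \eqref{sharp to Map} the identity morphism of $X^\sharp$ corresponds to $(\id_{X^\sharp})_\flat = \ev$, the counit of the adjunction recalled in Subsection \ref{subsection: the odd path space}. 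The lemma asserting that \eqref{flat pull-back} is a morphism of DGA therefore applies verbatim and yields the claim immediately.

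For a self-contained argument I would instead check the two defining properties directly, following the template of that earlier proof. Since $\mathcal{O}_X$ is concentrated in degree zero with vanishing differential, being a map of complexes reduces to $\partial_{X^\sharp\times\vec{\mathfrak{t}}}(f + df\cdot\epsilon) = 0$. Writing $\partial_{X^\sharp\times\vec{\mathfrak{t}}} = d \pm \partial_\epsilon$, the de Rham part produces $d(f) + d(df)\cdot\epsilon = df$ because $d^2 = 0$, while the $\partial_\epsilon$ part produces $-df$ once the Koszul sign is accounted for (the factor $df$ being of degree one), and the two cancel. For multiplicativity I would expand $\ev^*(f)\,\ev^*(g) = (f + df\cdot\epsilon)(g + dg\cdot\epsilon)$, commute the degree-zero factor $g$ past $\epsilon$ with no sign, drop the term carrying $\epsilon^2 = 0$, and read off $fg + (f\,dg + g\,df)\cdot\epsilon = fg + d(fg)\cdot\epsilon = \ev^*(fg)$ via the Leibniz rule; preservation of the unit is clear from $d(1) = 0$.

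The computation is routine and the only delicate points are the sign incurred when $\partial_\epsilon$ crosses the degree-one form $df$ and the vanishing $\epsilon^2 = 0$ forced by $\epsilon$ being odd — both of which already appear, resolved, in the proof of the previous lemma. I therefore expect no real obstacle and would present the result as a one-line corollary of that lemma, optionally recording the direct check as a remark.
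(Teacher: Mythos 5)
Your proposal is correct, and your primary route differs from the paper's. The paper proves this lemma by redoing the direct computation: it expands $\ev^{*}(f)\cdot\ev^{*}(g)=(f+df\cdot\epsilon)(g+dg\cdot\epsilon)=fg+d(fg)\cdot\epsilon$ and checks $\partial_{X^{\sharp}\times\vec{\mathfrak{t}}}(f+df\cdot\epsilon)=df-df\cdot\partial_{\epsilon}\epsilon=0$ — exactly the ``self-contained'' verification you give as a fallback, with the same two delicate points (the Koszul sign when $\partial_\epsilon$ crosses $df$, and $\epsilon^2=0$) handled the same way. Your preferred argument instead specializes the already-proved lemma that \eqref{flat pull-back} is a morphism of DGA to $\mathfrak{Y}=X^\sharp$, $\psi=\id_{X^\sharp}$, for which $(\psi_\flat)^*(f)=f+df\cdot\epsilon$ is literally \eqref{eval for odd paths}; since that lemma precedes this one in the paper, the reduction is non-circular and legitimate, and it is consistent with the paper's own (reader-delegated) observation that $\ev$ corresponds to $\id_{X^\sharp}$ under \eqref{sharp to Map}, i.e. is the counit of the adjunction. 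What the reduction buys is economy and a conceptual explanation of why the identity holds; what the paper's repetition buys is an explicit formula-level warm-up for $\ev^*$, which is used constantly afterwards. Either version is acceptable; if you present only the one-line corollary, state explicitly that $\psi^{*0}$ and $\psi^{*1}$ are the identity in degrees $0$ and $1$ so the specialization is visibly \eqref{eval for odd paths}.
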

\begin{proof}
Let $f,g\in\mathcal{O}_X$. The calculation
\begin{eqnarray*}
\ev^{*}(f) \cdot \ev^{*}(g) & = & (f + df\cdot\epsilon) \cdot (g +
dg\cdot\epsilon)\\
& = & fg + fdg\cdot\epsilon + gdf\cdot\epsilon\\
& = & fg + d(fg)\cdot\epsilon = \ev^{*}(fg)
\end{eqnarray*}
shows that \eqref{eval for odd paths} is a morphism of graded algebras.
The computation
\begin{eqnarray*}
\partial_{X^{\sharp}\times \vec{\mathfrak{t}}} (\ev^{*}(f)) & = & \partial_{X^{\sharp}\times \vec{\mathfrak{t}}} (f + df\cdot\epsilon) \\
& = & df - df\cdot\partial_{\epsilon}\epsilon\\
& = & df-df=0.
\end{eqnarray*}
shows that \eqref{eval for odd paths} is a map of complexes.
\end{proof}

Let
\begin{equation}\label{eval odd path manif}
\ev \colon X^\sharp \times \vec{\mathfrak{t}} \to X
\end{equation}
denote the map of DG-manifolds with the structure morphism given by \eqref{eval for odd paths}.

\begin{lemma}
The isomorphism $X^\sharp \to \Map(\vec{\mathfrak{t}}, X)$ given by \eqref{Map to sharp} corresponds to the map \eqref{eval odd path manif}.
\end{lemma}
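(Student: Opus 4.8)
The plan is to interpret the word ``corresponds'' through the Yoneda lemma and then to reduce the claim to a one-line computation. By Theorem~\ref{thm: odd path space representable} the maps \eqref{Map to sharp} and \eqref{sharp to Map} are mutually inverse natural isomorphisms relating the representable presheaf $\Hom(-,X^\sharp)$ and the presheaf $\Map(\vec{\mathfrak{t}},X)$. By the Yoneda lemma such a natural isomorphism is completely determined by a single universal element of $\Map(\vec{\mathfrak{t}},X)(X^\sharp)=\Hom(X^\sharp\times\vec{\mathfrak{t}},X)$, namely the image of $\id_{X^\sharp}$ under the component of the isomorphism at $\mathfrak{Y}=X^\sharp$. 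The assertion that the isomorphism ``corresponds to'' the map \eqref{eval odd path manif} is, by definition, the assertion that this universal element is precisely $\ev$.

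First I would evaluate the natural isomorphism at $\mathfrak{Y}=X^\sharp$ on the identity. Under \eqref{sharp to Map} the morphism $\psi=\id_{X^\sharp}\colon X^\sharp\to X^\sharp$ is sent to $\psi_\flat\colon X^\sharp\times\vec{\mathfrak{t}}\to X$, whose structure morphism is determined by $(\psi_\flat)^*(f)=\psi^{*0}(f)+\psi^{*1}(df)\cdot\epsilon$. Since the structure morphism of $\id_{X^\sharp}$ is the identity of $\Omega^\bullet_X=\mathcal{O}_{X^\sharp}$, we have $\psi^{*0}(f)=f$ and $\psi^{*1}(df)=df$, so that $(\psi_\flat)^*(f)=f+df\cdot\epsilon$. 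This is exactly the morphism \eqref{eval for odd paths} defining $\ev$, whence $(\id_{X^\sharp})_\flat=\ev$, which is the desired statement.

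For completeness, and to match the formulation in terms of \eqref{Map to sharp}, one may equivalently verify the dual statement $\ev^\sharp=\id_{X^\sharp}$: applying the recipe for \eqref{Map to sharp} to $\phi=\ev$ one has $\phi^*_0=\id$ and $\phi^*_1(f)=df$, so the induced map $\overline{\phi^*_1}\colon\Omega^1_X\to\Omega^1_X$ is the identity, and therefore the unique morphism of DGA with components $\phi^*_0$ and $\overline{\phi^*_1}$ in degrees zero and one is the identity of $\Omega^\bullet_X$. There is essentially no obstacle here beyond unwinding the Yoneda correspondence; once the universal element is identified as the value of \eqref{sharp to Map} on $\id_{X^\sharp}$, the computation is immediate precisely because the structure map of the identity morphism is the identity.
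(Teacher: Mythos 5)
Your proof is correct and is exactly the routine verification the paper leaves to the reader: by Yoneda the isomorphism is determined by the image of $\id_{X^\sharp}$ under \eqref{sharp to Map}, and your computation $(\id_{X^\sharp})_\flat^*(f)=f+df\cdot\epsilon=\ev^*(f)$ identifies that universal element as \eqref{eval odd path manif}. The converse check $\ev^\sharp=\id_{X^\sharp}$ is likewise correct, so nothing is missing.
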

\begin{proof}
Left to the reader.
\end{proof}

\subsection{Transgression for $\mathcal{O}$-modules}
We denote by $\pr \colon X^\sharp \times \vec{\mathfrak{t}} \to X^\sharp$ the canonical projection. The diagram
\[
\begin{CD}
X^\sharp\times\vec{\mathfrak{t}} @>{\ev}>> X \\
@V{\pr}VV \\
X^\sharp
\end{CD}
\]
gives rise to the functor
\begin{equation}\label{pr-ev for O-mod}
\pr_*\ev^* \colon \Mod{\mathcal{O}_X} \to \Mod{\mathcal{O}_{X^\sharp}}.
\end{equation}
Since the underlying space of both $X^\sharp$ and $X^\sharp\times\vec{\mathfrak{t}}$ is equal to $X$, the functor $\ev^*$ is given by $\ev^*\mathcal{E} = \mathcal{O}_{X^\sharp}[\epsilon]\otimes_{\mathcal{O}_X}\mathcal{E}$ and the effect of the functor $\pr_*$ amounts to restriction of scalars along the unit map $\mathcal{O}_{X^\sharp} \to \mathcal{O}_{X^\sharp}[\epsilon]$.

\begin{lemma}\label{lemma: mult by forms pushout}
For $k \geqslant 0$ the diagram
\[
\begin{CD}
\Omega^k_X \otimes_{\mathcal{O}_X}\Omega^1_X @>>> \Omega^k_X \otimes_{\mathcal{O}_X} \left(\mathcal{O}_{X^\sharp}[\epsilon]\right)^0 \\
@V{\wedge}VV @VVV \\
\Omega^{k+1}_X @>>> \left(\mathcal{O}_{X^\sharp}[\epsilon]\right)^k
\end{CD}
\]
is a push-out square.
\end{lemma}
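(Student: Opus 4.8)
The plan is to describe the four arrows explicitly and then to recognize the square as a push-out by the usual exactness criterion in the abelian category $\Mod{\mathcal{O}_X}$ of sheaves of $\mathcal{O}_X$-modules. Since $\deg\epsilon = -1$ and $\epsilon^2 = 0$, the $\mathcal{O}_X$-module underlying $\mathcal{O}_{X^\sharp}[\epsilon]$ in degree $k$ splits as $(\mathcal{O}_{X^\sharp}[\epsilon])^k = \Omega^k_X\oplus\Omega^{k+1}_X\cdot\epsilon$; in particular $(\mathcal{O}_{X^\sharp}[\epsilon])^0 = \mathcal{O}_X\oplus\Omega^1_X\cdot\epsilon$, so the top-right corner becomes $\Omega^k_X\oplus(\Omega^k_X\otimes_{\mathcal{O}_X}\Omega^1_X)\cdot\epsilon$. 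Under these identifications the top arrow is $\id\otimes(\,\cdot\,\epsilon)\colon \alpha\otimes\omega\mapsto\alpha\otimes(\omega\cdot\epsilon)$, the bottom arrow is $\beta\mapsto\beta\cdot\epsilon$, and the right arrow is the multiplication of the CDGA $\mathcal{O}_{X^\sharp}[\epsilon]$, namely $\alpha\otimes(f+\omega\cdot\epsilon)\mapsto f\alpha + (\alpha\wedge\omega)\cdot\epsilon$. Commutativity of the square is then immediate, both composites sending $\alpha\otimes\omega$ to $(\alpha\wedge\omega)\cdot\epsilon$.

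Next I would invoke the standard fact that a commutative square in an abelian category, with top-left $A$, top-right $B$, bottom-left $C$, bottom-right $D$, and structure maps $f\colon A\to B$, $g\colon A\to C$, $p\colon B\to D$, $q\colon C\to D$, is a push-out precisely when the three-term sequence $A\xrightarrow{(f,-g)} B\oplus C\xrightarrow{(p,q)} D\to 0$ is exact. In our situation this reads
\[
\Omega^k_X\otimes_{\mathcal{O}_X}\Omega^1_X \xrightarrow{(i,\,-\wedge)} \bigl(\Omega^k_X\otimes_{\mathcal{O}_X}(\mathcal{O}_{X^\sharp}[\epsilon])^0\bigr)\oplus\Omega^{k+1}_X \xrightarrow{(m,\,j)} (\mathcal{O}_{X^\sharp}[\epsilon])^k \to 0,
\]
where $i$, $\wedge$, $m$, $j$ denote the top, left, right, and bottom arrows respectively, so the proof reduces to checking exactness of this sequence. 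As everything is assembled from the $\mathcal{O}_X$-module decompositions above with $\mathcal{O}_X$-linear structure maps, this is a purely algebraic assertion that may be verified directly (equivalently, on stalks).

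To establish exactness I would argue as follows. Surjectivity of $(m,j)$ is clear: the summand $\Omega^k_X\subset D$ is the image under $m$ of $\Omega^k_X\otimes_{\mathcal{O}_X}\mathcal{O}_X\cong\Omega^k_X$, while the summand $\Omega^{k+1}_X\cdot\epsilon\subset D$ is the image of $j$. For the kernel, write a general element of $B\oplus C$ as $(\xi + z\cdot\epsilon,\ \beta)$ with $\xi\in\Omega^k_X$, $z\in\Omega^k_X\otimes_{\mathcal{O}_X}\Omega^1_X$ and $\beta\in\Omega^{k+1}_X$; applying $(m,j)$ yields $\xi + (\wedge z + \beta)\cdot\epsilon$, which vanishes exactly when $\xi = 0$ and $\beta = -\wedge z$. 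Such an element equals $(z\cdot\epsilon,\ -\wedge z) = (i,-\wedge)(z)$ — here I use that $i\colon\omega\mapsto\omega\cdot\epsilon$ is an isomorphism onto $(\Omega^k_X\otimes_{\mathcal{O}_X}\Omega^1_X)\cdot\epsilon$ — so $\ker(m,j)$ is contained in, and by commutativity equal to, $\im(i,-\wedge)$.

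The only genuinely delicate point is that the wedge map $\wedge\colon\Omega^k_X\otimes_{\mathcal{O}_X}\Omega^1_X\to\Omega^{k+1}_X$ is far from injective, so the square is not a direct-sum decomposition and cannot be split naively; the exactness criterion is precisely what circumvents this, since $\im(i,-\wedge)$ automatically absorbs $\ker\wedge$. Keeping the bookkeeping honest therefore amounts to the two observations above: the non-$\epsilon$ summand $\Omega^k_X$ of $D$ is untouched by the relations and is accounted for solely by $m|_{\Omega^k_X\otimes\mathcal{O}_X}$, whereas on the $\epsilon$-summand the relations imposed in forming the push-out (transported through the isomorphism $i$) match exactly the kernel of the surjection $\wedge$. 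Correctly identifying the four arrows — in particular the multiplication arrow $m$ together with the two degree decompositions — is the only other place where care is required.
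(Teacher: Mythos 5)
Your proof is correct. The paper states this lemma without giving any proof, so there is no argument of the authors' to compare against; your verification --- decomposing $(\mathcal{O}_{X^\sharp}[\epsilon])^k$ as $\Omega^k_X\oplus\Omega^{k+1}_X\cdot\epsilon$, identifying the four arrows (top $\alpha\otimes\omega\mapsto\alpha\otimes\omega\epsilon$, right the multiplication, bottom $\beta\mapsto\beta\epsilon$), and checking exactness of $A\xrightarrow{(f,-g)}B\oplus C\xrightarrow{(p,q)}D\to 0$ --- is precisely the routine computation the authors left implicit, and it is consistent with how the lemma is used in Remark \ref{remark: prev and jets}. The one delicate point you single out, namely that $\wedge$ is not injective so the square is not a naive direct-sum splitting and one must instead identify $\ker(m,j)$ with $\im(i,-\wedge)$, is handled correctly.
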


\begin{remark}\label{remark: prev and jets}
Suppose that $\mathcal{E}$ is a vector bundle on $X$.
The complex $\pr_*\ev^*\mathcal{E}$ with differential denoted $\partial$ may be recognized in more traditional terms as follows.

The differential $\mathcal{E} =
\left(\pr_*\ev^*\mathcal{E}\right)^{-1} \xrightarrow{\partial}
\left(\pr_*\ev^*\mathcal{E}\right)^0$ is a differential operator
of order one and therefore factors as $\mathcal{E}
\xrightarrow{j^1} \mathcal{J}^1(\mathcal{E}) \to
\left(\pr_*\ev^*\mathcal{E}\right)^0$ the second map being
$\mathcal{O}_X$-linear. It is easy to see that it is an
isomorphism.

One deduces easily from Lemma \ref{lemma: mult by forms pushout} and the above isomorphism that for $k \geqslant 0$ the square
\[
\begin{CD}
\Omega^k_X \otimes_{\mathcal{O}_X}\Omega^1_X\otimes_{\mathcal{O}_X}\mathcal{E} @>>> \Omega^k_X \otimes_{\mathcal{O}_X}\mathcal{J}^1(\mathcal{E}) \\
@V{\wedge}VV @VVV \\
\Omega^{k+1}_X\otimes_{\mathcal{O}_X}\mathcal{E} @>>> \left(\pr_*\ev^*\mathcal{E}\right)^k
\end{CD}
\]
is cocartesian. The differential $\partial$ on
$\pr_*\ev^*\mathcal{E}$ is induced by the canonical flat
connection on $\mathcal{J}^\infty(\mathcal{E})$. Namely, the
composition
\[
\Omega^k_X \otimes_{\mathcal{O}_X}\mathcal{J}^\infty(\mathcal{E}) \xrightarrow{\nabla^{can}}\Omega^{k+1}_X\otimes_{\mathcal{O}_X}\mathcal{J}^\infty(\mathcal{E}) \to \Omega^{k+1}_X\otimes_{\mathcal{O}_X}\mathcal{J}^1(\mathcal{E}) \to \left(\pr_*\ev^*\mathcal{E}\right)^{k+1}
\]
factors as
\[
\Omega^k_X \otimes_{\mathcal{O}_X}\mathcal{J}^\infty(\mathcal{E}) \to \Omega^k_X \otimes_{\mathcal{O}_X}\mathcal{J}^1(\mathcal{E}) \to \left(\pr_*\ev^*\mathcal{E}\right)^k \xrightarrow{\partial} \left(\pr_*\ev^*\mathcal{E}\right)^{k+1} .
\]
\end{remark}

\subsection{The ``integration" map}\label{subsection: integration map}
The unique map of complexes
\[
\mathcal{O}_{\vec{\mathfrak{t}}}\otimes_\mathbb{C} \Omega_X^k \to \mathcal{O}_{X^\sharp}[k+1]
\]
whose component of degree $-1$ is the identity map (and, whose
component of degree zero equal, therefore, to the de Rham
differential) extends by linearity to the map
\[
\widetilde{\scriptstyle\int} \colon \mathcal{O}_{X^\sharp}[\epsilon]\otimes_\mathbb{C}\Omega_X^k \to \mathcal{O}_{X^\sharp}[k+1] .
\]
The calculation
\[
\widetilde{\scriptstyle\int}((f+ df\cdot\epsilon)\otimes\omega) =
fd\omega + df\wedge\omega = d(f\omega) =
\widetilde{\scriptstyle\int}(1\otimes f\omega)
\]
shows that the map $\widetilde{\scriptstyle\int}$ factors through the map
\begin{equation}\label{integration map}
{\scriptstyle\int}\colon \pr_*\ev^*\Omega_X^k = \mathcal{O}_{X^\sharp}[\epsilon]\otimes_{\mathcal{O}_X}\Omega_X^k \to \mathcal{O}_{X^\sharp}[k+1] .
\end{equation}

\section{Transgression for Lie algebroids}\label{section: transgression for lie algebroids}

\subsection{Lie algebroids}\label{subsection: Lie algebroids}
An $\mathcal{O}_X$-Lie algebroid structure on an $\mathcal{O}_X$-module $\mathcal{A}$ consists of
\begin{enumerate}
\item a structure of a $\mathbb{C}$-Lie algebra $[\ ,\ ] \colon \mathcal{A}\otimes_\mathbb{C}\mathcal{A} \to \mathcal{A}$;

\item an $\mathcal{O}_X$-linear map $\sigma \colon \mathcal{A} \to
\mathcal{T}_X$ of Lie algebras called \emph{anchor map}.
\end{enumerate}
These data are required to satisfy the compatibility condition (Leibniz rule)
\[
[a,f\cdot b] = \sigma(a)(f)\cdot b + f\cdot [a,b]
\]
for $a,b \in \mathcal{A}$ and $f \in \mathcal{O}_X$.

A morphism of $\mathcal{O}_X$-Lie algebroids $\phi \colon
\mathcal{A}_1 \to \mathcal{A}_2$ is an $\mathcal{O}_X$-linear map
of Lie algebras which commutes with respective anchor maps.

With the above definition of morphisms $\mathcal{O}_X$-Lie algebroids form a category denoted $\LA{\mathcal{O}_X}$.

The notion of Lie algebroid generalizes readily to the DG context.

\begin{remark}\label{remark: bracket in negative degrees}
Suppose that $\mathfrak{X} = (X, \mathcal{O}_\mathfrak{X})$ is a DG-manifold such that $\mathcal{O}_\mathfrak{X}^i = 0$ for $i \leqslant -1$ and $\mathcal{A}$ is a Lie algebroid on $\mathfrak{X}$. Then, the Leibniz rule implies that for $i,j \leqslant -1$ the brackets $[\ ,\ ]^{i,j}\colon \mathcal{A}^i\otimes_\mathbb{C}\mathcal{A}^i \to \mathcal{A}^{i+j}$ are in fact $\mathcal{O}_\mathfrak{X}^0$-bilinear. Indeed, for $f \in \mathcal{O}_\mathfrak{X}^0$ and $a\in \mathcal{A}^i$, $i\leqslant -1$, $\sigma(a)(f)$ has strictly negative degree.

\end{remark}

\subsection{Atiyah algebras}
We recall the definition of the Atiyah algebra of an $\mathcal{O}$-module.

For an $\mathcal{O}_X$-module $\mathcal{E}$ the Atiyah algebra $\mathcal{A}_\mathcal{E}$ is defined as the sheaf whose sections are pairs $(P,\xi)$, where
\begin{itemize}
\item $P \in \shEnd_\mathbb{C}(\mathcal{E})$,
\item $\xi \in \mathcal{T}_X$
\end{itemize}
subject to the condition
\[
\forall f\in \mathcal{O}_X\ \  [P,f] = \xi(f) \in \shEnd_\mathbb{C}(\mathcal{E}) .
\]
In other words, $P$ is a differential operator of order one with
the principal symbol $\sigma(P)$ equal to $\xi\cdot\id$; in
particular, $\xi$ is uniquely determined by $P$. The assignment
$(P,\xi) \mapsto \sigma(P) = \xi$ defines the map $\sigma \colon
\mathcal{A}_\mathcal{E} \to \mathcal{T}_X$. With the obvious
$\mathcal{O}_X$-structure, the bracket given by the commutator of
operators and the principal symbol map as anchor, the Atiyah
algebra acquires the canonical structure of an $\mathcal{O}_X$-Lie
algebroid. If $\mathcal{E}$ is locally free of finite rank, i.e. a
vector bundle, the sequence
\[
0 \to \shEnd_{\mathcal{O}_X}(\mathcal{E}) \to \mathcal{A}_\mathcal{E} \xrightarrow{\sigma} \mathcal{T}_X \to 0
\]
is exact.

The above definition of Atiyah algebra generalizes readily to the DG context (cf. \cite{FN}).

\subsection{Modules over Lie algebroids}
Suppose that $\mathcal{A}$ is an $\mathcal{O}_X$-Lie algebroid.

A structure of an $\mathcal{A}$-module on an
$\mathcal{O}_X$-module $\mathcal{E}$ is a morphism of
$\mathcal{O}_X$-Lie algebroids $\mathcal{A} \to
\mathcal{A}_\mathcal{E}$, i.e. an action of $\mathcal{A}$ on
$\mathcal{E}$ by differential operators of order one which
satisfies the Leibniz rule.

A morphism $\phi \colon \mathcal{E}_1 \to \mathcal{E}_2$ of $\mathcal{A}$-modules is an $\mathcal{O}_X$-linear map which commutes with respective actions.

With the above definition of morphisms $\mathcal{A}$-modules form a category denoted $\Mod{\mathcal{A}}$.

\subsection{Vector fields on the odd path space}
The canonical map of Lie algebras $L \colon
\mathcal{T}_X \to \mathcal{T}_{X^\sharp}$ defined by $\xi \mapsto
L_\xi$ which  extends to the map of graded Lie algebras $L \colon
\mathcal{O}_{\vec{\mathfrak{t}}}\otimes_\mathbb{C} \mathcal{T}_X
\to \mathcal{T}_{X^\sharp}$ by setting $L_{\epsilon\cdot\xi} =
\iota_\xi$ and, by linearity, to the map of
$\mathcal{O}_{X^\sharp}$-Lie algebroids
\begin{equation}\label{cone to T}
\widetilde{L} \colon \mathcal{O}_{X^\sharp}[\epsilon]\otimes_\mathbb{C}\mathcal{T}_X \to \mathcal{T}_{X^\sharp}
\end{equation}

\begin{lemma}
{~}
\begin{enumerate}
\item The map \eqref{cone to T} factors as
\[
\mathcal{O}_{X^\sharp}[\epsilon]\otimes_\mathbb{C}\mathcal{T}_X \to  \mathcal{O}_{X^\sharp}[\epsilon]\otimes_{\mathcal{O}_X}\mathcal{T}_X = \pr_*\ev^*\mathcal{T}_X  \to \mathcal{T}_{X^\sharp}.
\]

\item The map $\pr_*\ev^*\mathcal{T}_X  \to \mathcal{T}_{X^\sharp}$ is an isomorphism of $\mathcal{O}_{X^\sharp}$-Lie algebroids.
\end{enumerate}
\end{lemma}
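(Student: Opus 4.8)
The plan is to prove the two assertions separately: the first is an algebraic descent statement whose whole content is a Cartan identity, and the second is a local computation identifying both sides with the same free graded module, after which the Lie algebroid statement follows formally.

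For part (1), recall that the $\mathcal{O}_X$-module structure on $\mathcal{O}_{X^\sharp}[\epsilon]$ used to form $\pr_*\ev^*\mathcal{T}_X = \mathcal{O}_{X^\sharp}[\epsilon]\otimes_{\mathcal{O}_X}\mathcal{T}_X$ is the one induced by $\ev^*$. Consequently the kernel of the canonical surjection $\mathcal{O}_{X^\sharp}[\epsilon]\otimes_\mathbb{C}\mathcal{T}_X \to \pr_*\ev^*\mathcal{T}_X$ is the $\mathcal{O}_{X^\sharp}$-submodule generated by the elements $\ev^*(f)\otimes\xi - 1\otimes f\xi$ together with their $\epsilon$-multiples, for $f\in\mathcal{O}_X$ and $\xi\in\mathcal{T}_X$. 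Since $\widetilde{L}$ is $\mathcal{O}_{X^\sharp}$-linear it suffices to evaluate it on these two families. Writing $\ev^*(f) = f + df\cdot\epsilon$ and using $\widetilde{L}(1\otimes\xi)=L_\xi$, $\widetilde{L}(\epsilon\otimes\xi)=\iota_\xi$, the first generator maps to $f\,L_\xi + df\wedge\iota_\xi - L_{f\xi}$, which vanishes by the Cartan identity $L_{f\xi}=f\,L_\xi + df\wedge\iota_\xi$ (equivalently $L_{f\xi}=d\circ f\iota_\xi + f\iota_\xi\circ d$); the $\epsilon$-multiple maps to $f\,\iota_\xi - \iota_{f\xi}$, which vanishes because contraction is $\mathcal{O}_X$-linear. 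This is the only computation needed for (1), and I expect it to be routine.

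For part (2) I would argue locally and then observe that the resulting isomorphism, being canonical, glues. Choose coordinates $x^1,\dots,x^n$ on $X$, so that $\mathcal{O}_{X^\sharp} = \Omega^\bullet_X$ is locally the free graded-commutative $\mathcal{O}_X$-algebra on the degree-one generators $dx^i$. Then $\mathcal{T}_{X^\sharp} = \operatorname{Der}(\Omega^\bullet_X)$ is free over $\mathcal{O}_{X^\sharp}$ on the coordinate derivations $\partial/\partial x^i$ (degree $0$) and $\partial/\partial(dx^i)$ (degree $-1$), a homogeneous derivation being determined by its values on $x^i$ and $dx^i$. Cartan calculus identifies these with the transgression generators: $\partial/\partial(dx^i)=\iota_{\partial_i}$, and $\partial/\partial x^i = L_{\partial_i}$, the latter because $L_{\partial_i}$ acts as $\partial_i$ on functions and annihilates each $dx^j$ (since $L_{\partial_i}\,dx^j = d\,\delta_i^j = 0$). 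Thus $\{L_{\partial_i},\iota_{\partial_i}\}$ is a homogeneous $\mathcal{O}_{X^\sharp}$-basis of $\mathcal{T}_{X^\sharp}$. On the source, $\ev^*\mathcal{T}_X$ is free over $\mathcal{O}_{X^\sharp}[\epsilon]$ on $\{1\otimes\partial_i\}$ by base change, and restriction of scalars along the unit $\mathcal{O}_{X^\sharp}\to\mathcal{O}_{X^\sharp}[\epsilon]$, which is free of rank two on $\{1,\epsilon\}$, exhibits $\pr_*\ev^*\mathcal{T}_X$ as free over $\mathcal{O}_{X^\sharp}$ on $\{1\otimes\partial_i,\ \epsilon\otimes\partial_i\}$. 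The factored map sends this basis to $\{L_{\partial_i},\iota_{\partial_i}\}$ while preserving degrees, hence is an isomorphism of $\mathcal{O}_{X^\sharp}$-modules.

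It remains to promote this to an isomorphism of Lie algebroids, which is formal once $\widetilde{L}$ is known to be a morphism: a bijective morphism of $\mathcal{O}_{X^\sharp}$-Lie algebroids is an isomorphism. For completeness one records the compatibilities underlying this, all instances of Cartan calculus: $[L_\xi,L_\eta]=L_{[\xi,\eta]}$, $[L_\xi,\iota_\eta]=\iota_{[\xi,\eta]}$, $[\iota_\xi,\iota_\eta]=0$ for the bracket (the graded commutator of derivations), $[d,\iota_\xi]=L_\xi$ and $[d,L_\xi]=0$ for the differential (so that $\partial$ on the source corresponds to $[d,-]$ with the de Rham differential on $\mathcal{T}_{X^\sharp}$), and the identity anchor of $\mathcal{T}_{X^\sharp}$ for the anchor. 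The main obstacle is the structural input used above, namely the identification of $\operatorname{Der}(\Omega^\bullet_X)$ as the free $\mathcal{O}_{X^\sharp}$-module generated by the Lie derivatives and contractions along a coordinate frame; everything else—the descent in (1), the basis count on the source, and the passage from a bijective morphism to an isomorphism—is formal once this is in hand.
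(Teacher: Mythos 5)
Your argument is correct. Part (1) is essentially the paper's own proof: the whole content is the identity $L_{f\xi}=f\,L_\xi+df\wedge\iota_\xi$ (plus the trivial $\iota_{f\xi}=f\iota_\xi$), applied to the generators $\ev^*(f)\otimes\xi-1\otimes f\xi$ of the kernel; your framing in terms of those generators just makes explicit what the paper leaves implicit. For part (2) you take a genuinely different route. The paper argues invariantly: it introduces the vertical tangent sheaf $\mathcal{T}_{X^\sharp/X}$ for the projection $X^\sharp\to X$, fits $\mathcal{T}_{X^\sharp}$ into the relative tangent sequence $0\to\mathcal{T}_{X^\sharp/X}\to\mathcal{T}_{X^\sharp}\to\mathcal{O}_{X^\sharp}\otimes_{\mathcal{O}_X}\mathcal{T}_X\to 0$, maps the filtration of $\pr_*\ev^*\mathcal{T}_X$ by $\epsilon\cdot\pr_*\ev^*\mathcal{T}_X$ into it, and concludes by the five lemma once the outer vertical maps (contraction on the left, the identity on the right) are seen to be isomorphisms. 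You instead compute in local coordinates, identifying $\mathcal{T}_{X^\sharp}=\operatorname{Der}(\Omega^\bullet_X)$ as the free $\mathcal{O}_{X^\sharp}$-module on $\partial/\partial x^i=L_{\partial_i}$ and $\partial/\partial(dx^i)=\iota_{\partial_i}$ and matching this basis with $\{1\otimes\partial_i,\epsilon\otimes\partial_i\}$. The two arguments rest on the same local freeness fact (the paper's "easy to see" left vertical isomorphism is exactly your identification of the vertical derivations with contractions), so neither is deeper; the paper's version is coordinate-free and generalizes more readily, while yours produces an explicit basis and, as a bonus, records the Cartan identities $[L_\xi,L_\eta]=L_{[\xi,\eta]}$, $[L_\xi,\iota_\eta]=\iota_{[\xi,\eta]}$, $[\iota_\xi,\iota_\eta]=0$, $[d,\iota_\xi]=L_\xi$ needed for the Lie algebroid compatibility, which the paper does not spell out at all. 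No gaps.
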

\begin{proof}
The calculation
\[
\widetilde{L}((f+ df\cdot\epsilon)\otimes \xi) = f\cdot L_\xi +
df\cdot\iota_\xi = L_{f\xi}
\]
implies the first claim.

Let $\mathcal{T}_{X^\sharp/X}\subset\mathcal{T}_{X^\sharp}$ denote
the normalizer of $\mathcal{O}_X\subset\mathcal{O}_{X^\sharp}$.
The map of graded (but not DG) manifolds $p \colon X^\sharp \to X$
which corresponds to the inclusion $\mathcal{O}_X \hookrightarrow
\mathcal{O}_{X^\sharp}$ gives rise to the short exact sequence of
$\mathcal{O}_{X^\sharp}$-modules
\[
0 \to \mathcal{T}_{X^\sharp/X} \to \mathcal{T}_{X^\sharp} \to \mathcal{O}_{X^\sharp}\otimes_{\mathcal{O}_X} \mathcal{T}_X \to 0 .
\]
Since the action of $\epsilon\otimes\mathcal{T}_X$ on
$\mathcal{O}_{X^\sharp}$ is $\mathcal{O}_X$-linear  the
restriction of the map $\pr_*\ev^*\mathcal{T}_X  \to
\mathcal{T}_{X^\sharp}$ to $\epsilon\cdot\pr_*\ev^*\mathcal{T}_X$
takes values in $\mathcal{T}_{X^\sharp/X}$ and, hence, induces a
morphism of short exact sequences
\[
\begin{CD}
0 @>>> \epsilon\cdot\pr_*\ev^*\mathcal{T}_X @>>> \pr_*\ev^*\mathcal{T}_X @>>> \pr_*\ev^*\mathcal{T}_X/\epsilon\cdot\pr_*\ev^*\mathcal{T}_X  @>>> 0 \\
& & @VVV @VVV @VVV \\
0 @>>> \mathcal{T}_{X^\sharp/X} @>>> \mathcal{T}_{X^\sharp} @>>> \mathcal{O}_{X^\sharp}\otimes_{\mathcal{O}_X} \mathcal{T}_X @>>> 0
\end{CD}
\]
It is easy to see that the left vertical map (the
$\mathcal{O}_{X^\sharp}$-linear extension of the action by
interior multiplication)
$\mathcal{O}_{X^\sharp}\otimes_{\mathcal{O}_X} \mathcal{T}_X[1]
\to \mathcal{T}_{X^\sharp/X}$ is an isomorphism, and so is the
right vertical map. This implies the second claim.
\end{proof}

\subsection{Transgression for Lie algebroids}\label{subsection: Transgression for Lie algebroids}
Suppose that $\mathcal{A}$ is an $\mathcal{O}_X$-Lie algebroid as in \ref{subsection: Lie algebroids}.

The sheaf $\mathcal{O}_{X^\sharp}[\epsilon]\otimes_\mathbb{C}\mathcal{A}$ has a canonical structure of an $\mathcal{O}_{X^\sharp}$-Lie algebroid with
\begin{itemize}
\item the anchor map is given by the composition
\[
\widetilde{\sigma}:\mathcal{O}_{X^\sharp}[\epsilon]\otimes_\mathbb{C}\mathcal{A}
\xrightarrow{\id\otimes\sigma}
\mathcal{O}_{X^\sharp}[\epsilon]\otimes_\mathbb{C}\mathcal{T}_X
\xrightarrow{\eqref{cone to T}} \mathcal{T}_{X^\sharp} ,
\]

\item the bracket defined the extension of the bracket on
$\mathcal{A}$ by the Leibniz rule, the anchor above and the action
of $\mathcal{T}_{X^\sharp}$ on $\mathcal{O}_{X^\sharp}[\epsilon]$;
namely:
\begin{align*}
[\omega_1\otimes1\otimes a_1,\, & \omega_2\otimes1\otimes a_2] =
\omega_1\wedge L_{\sigma(a_1)}\omega_2\otimes1\otimes a_2 \\
&
+(-1)^{ij}\omega_2\wedge(\omega_1\otimes1\otimes[a_1,a_2]-L_{\sigma(a_2)}\omega_1\otimes1\otimes
a_1),
\end{align*}

\begin{align*}
[\omega_1\otimes1\otimes a_1,\, & \omega_2\otimes\epsilon\otimes
b_2] = \omega_1\wedge
L_{\sigma(a_1)}\omega_2\otimes\epsilon\otimes b_2 \\
&
+(-1)^{ij}\omega_2\wedge(\omega_1\otimes\epsilon\otimes[a_1,b_2]-(-1)^i\iota_{\sigma(b_2)}\omega_1\otimes1\otimes
a_1),
\end{align*}

\begin{align*}
[\omega_1\otimes\epsilon\otimes b_1,\, &
\omega_2\otimes\epsilon\otimes b_2] = \\
& \omega_1\wedge\iota_{\sigma(b_1)}\omega_2\otimes\epsilon\otimes
b_2 +
(-1)^{ij+j+i}\omega_2\wedge\iota_{\sigma(b_2)}\omega_1\otimes\epsilon\otimes
b_1.
\end{align*}
where
$\omega_1\in\mathcal{O}^{i}_{X^{\sharp}},\,\omega_2\in\mathcal{O}^{j}_{X^{\sharp}}$,
$a_1,a_2,b_1,b_2\in\mathcal{A}$.
\end{itemize}

\begin{lemma}\label{lemma: LA structure on pr-ev}
The structure of an $\mathcal{O}_{X^\sharp}$-Lie algebroid on
$\mathcal{O}_{X^\sharp}[\epsilon]\otimes_\mathbb{C}\mathcal{A}$
descends to one on $\pr_*\ev^*\mathcal{A}$.
\end{lemma}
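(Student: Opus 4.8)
The plan is to realize $\pr_*\ev^*\mathcal{A}$ as a quotient of $\mathcal{O}_{X^\sharp}[\epsilon]\otimes_\mathbb{C}\mathcal{A}$ and to check that the $\mathcal{O}_{X^\sharp}$-Lie algebroid structure passes to the quotient. Write $M := \mathcal{O}_{X^\sharp}[\epsilon]\otimes_\mathbb{C}\mathcal{A}$ and let $\pi\colon M \to \pr_*\ev^*\mathcal{A} = \mathcal{O}_{X^\sharp}[\epsilon]\otimes_{\mathcal{O}_X}\mathcal{A}$ be the canonical surjection, with kernel $N$. The essential point — and the source of the signs below — is that the $\mathcal{O}_X$-module structure on $\mathcal{O}_{X^\sharp}[\epsilon]$ used in the target is the one induced by $\ev^*$, so that $f\in\mathcal{O}_X$ acts as multiplication by $\ev^*(f) = f + df\cdot\epsilon$. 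Consequently $N$ is generated, as a sheaf of $\mathbb{C}$-vector spaces, by the elements $\ev^*(f)\cdot u\otimes a - u\otimes fa$ with $f\in\mathcal{O}_X$, $u\in\mathcal{O}_{X^\sharp}[\epsilon]$ and $a\in\mathcal{A}$; observe that these generators carry an $\epsilon$-correction coming from the $df\cdot\epsilon$ term. Since any such generator is carried to one of the same form upon multiplication by an element of $\mathcal{O}_{X^\sharp}$, the subsheaf $N$ is an $\mathcal{O}_{X^\sharp}$-submodule, and the $\mathcal{O}_{X^\sharp}$-module structure descends without further argument.

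Next I would check that the anchor descends, i.e. that $\widetilde{\sigma}(N) = 0$. By definition $\widetilde{\sigma} = \widetilde{L}\circ(\id\otimes\sigma)$, and since $\sigma$ is $\mathcal{O}_X$-linear one has
\[
(\id\otimes\sigma)\bigl(\ev^*(f)\cdot u\otimes a - u\otimes fa\bigr) = \ev^*(f)\cdot u\otimes\sigma(a) - u\otimes f\sigma(a),
\]
which is exactly the corresponding generator of the kernel of $\mathcal{O}_{X^\sharp}[\epsilon]\otimes_\mathbb{C}\mathcal{T}_X \to \pr_*\ev^*\mathcal{T}_X$. By part (1) of the preceding lemma on vector fields on the odd path space, the map \eqref{cone to T} factors through $\pr_*\ev^*\mathcal{T}_X$ and hence annihilates this kernel; therefore $\widetilde{\sigma}$ annihilates $N$ and descends to an anchor $\pr_*\ev^*\mathcal{A} \to \mathcal{T}_{X^\sharp}$.

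It remains to show the bracket descends, which is the crux of the lemma: one must verify $[N, M]\subseteq N$ and $[M, N]\subseteq N$, so that the $\mathbb{C}$-bilinear bracket on $M$ induces a well-defined bracket on $\pr_*\ev^*\mathcal{A}$ (the Jacobi identity and the Leibniz rule are then inherited automatically, $\pi$ being a surjection intertwining the structures). By $\mathbb{C}$-bilinearity it suffices to bracket a generator of $N$ against each of the basic elements $\omega\otimes1\otimes a$ and $\omega\otimes\epsilon\otimes b$, using the three explicit bracket formulas. The mechanism that makes this work is the interplay of two Leibniz-type identities: the Leibniz rule $[a, f b] = \sigma(a)(f)\cdot b + f\cdot[a,b]$ for $\mathcal{A}$ together with the $\mathcal{O}_X$-linearity of $\sigma$, and the Cartan identity $L_{f\xi}\omega = f\,L_\xi\omega + df\wedge\iota_\xi\omega$ governing the induced action on forms. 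When one expands the bracket of a generator $\ev^*(f)\cdot u\otimes a - u\otimes fa$ against a basic element and applies these identities, the resulting $df\wedge(\cdots)$ terms — together with the $\epsilon$-components coming from the $df\cdot\epsilon$ part of $\ev^*(f)$ — reorganize into a $\mathbb{C}$-linear combination of generators of $N$. Carrying this out for each of the three bracket formulas and in both slots is the main obstacle: it is entirely mechanical but sign-heavy and lengthy, precisely the sort of verification deferred to Section \ref{section: proofs}. Once established, the descended anchor and bracket make $\pr_*\ev^*\mathcal{A}$ into an $\mathcal{O}_{X^\sharp}$-Lie algebroid and $\pi$ into a morphism of Lie algebroids, completing the proof.
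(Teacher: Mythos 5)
Your proposal is correct and follows essentially the same route as the paper: the paper likewise presents $\pr_*\ev^*\mathcal{A}$ as the quotient of $\mathcal{O}_{X^\sharp}[\epsilon]\otimes_\mathbb{C}\mathcal{A}$ by the relations $u\cdot\ev^*(f)\otimes a \sim u\otimes fa$ and verifies that the bracket and anchor preserve these relations, the computation turning on exactly the two identities you name (the Leibniz rule in $\mathcal{A}$ and $L_{f\xi}=fL_\xi+df\wedge\iota_\xi$ matching the $df\cdot\epsilon$ correction in $\ev^*(f)$). The only difference is cosmetic: you defer the sign-by-sign expansion of the bracket on the generators, while the paper writes out two of the four required identities and leaves the rest to the reader, and your treatment of the anchor by citing the factorization of \eqref{cone to T} through $\pr_*\ev^*\mathcal{T}_X$ is a slightly cleaner reduction than the paper's direct recomputation.
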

\begin{proof}
The proof is given in \ref{subsection: proof of LA structure descends}.
\end{proof}

\begin{notation}
In what follows we shall denote the $\mathcal{O}_{X^\sharp}$-Lie
algebroid structure on $\pr_*\ev^*\mathcal{A}$ given by Lemma
\ref{lemma: LA structure on pr-ev} by $\mathcal{A}^\sharp$.
\end{notation}

We leave it to the reader to check that the morphism
$\pr_*\ev^*\phi \colon \pr_*\ev^*\mathcal{A}_1 \to
\pr_*\ev^*\mathcal{A}_2$ induced by a morphism $\phi \colon
\mathcal{A}_1 \to \mathcal{A}_2$ of $\mathcal{O}_X$-Lie algebroids
is in fact a morphism of $\mathcal{O}_{X^\sharp}$-Lie algebroids.
That is, the assignment $\mathcal{A} \mapsto \mathcal{A}^\sharp$
extends to a functor
\[
\left(\ \right)^\sharp \colon \LA{\mathcal{O}_X} \to \LA{\mathcal{O}_{X^\sharp}} .
\]

\subsection{Transgression for modules}
Suppose that $\mathcal{E}$ is an $\mathcal{O}_X$-module. For $D\in
\mathcal{A}_\mathcal{E}$ we denote by $\iota_D \colon
\left(\pr_*\ev^*\mathcal{E}\right)^0 \to \mathcal{E}$ the unique
$\mathcal{O}_X$-linear map such that $D = \iota_D \circ \partial$,
see Remark \ref{remark: prev and jets}. Let
\begin{equation}\label{iota D}
\widetilde{\iota_D} \colon \Omega^{k+1}_X \otimes_{\mathcal{O}_X} \left(\pr_*\ev^*\mathcal{E}\right)^0 \to \left(\pr_*\ev^*\mathcal{E}\right)^k
\end{equation}
denote the map defined by the formula
\[
\widetilde{\iota_D}(\alpha\otimes B) = \iota_{\sigma(D)}\alpha\otimes B + (-1)^{\alpha}\alpha\otimes\iota_D B .
\]

\begin{lemma}\label{lemma: iota descends}
{~}
\begin{enumerate}
\item The map \eqref{iota D} factors through the map
$\widetilde{\iota_D} \colon
\left(\pr_*\ev^*\mathcal{E}\right)^{k+1} \to
\left(\pr_*\ev^*\mathcal{E}\right)^k$.

\item The $\mathcal{O}_X$-linear map $\widetilde{\iota_D} \colon
\pr_*\ev^*\mathcal{E} \to \pr_*\ev^*\mathcal{E}[-1]$ is a
differential operator or order one with respect to the
$\mathcal{O}_{X^\sharp}$-module structure with principal symbol
$\sigma(D)\otimes\epsilon$. In particular, $\widetilde{\iota_D}
\in \mathcal{A}^{-1}_{\pr_*\ev^*\mathcal{E}}$.
\end{enumerate}
\end{lemma}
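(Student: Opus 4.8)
The plan is to carry out both parts using the cocartesian description of $(\pr_*\ev^*\mathcal{E})^k$ furnished by Remark \ref{remark: prev and jets}. Recall from there that $(\pr_*\ev^*\mathcal{E})^0\cong\mathcal{J}^1(\mathcal{E})$, that $\iota_D\colon\mathcal{J}^1(\mathcal{E})\to\mathcal{E}$ is the $\mathcal{O}_X$-linear factorisation of $D$ through the jet prolongation, and that its restriction to the symbol subsheaf $\Omega^1_X\otimes_{\mathcal{O}_X}\mathcal{E}\hookrightarrow\mathcal{J}^1(\mathcal{E})$ is contraction, $\iota_D(\beta\otimes e)=\iota_{\sigma(D)}\beta\cdot e$. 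Throughout I will use the graded Leibniz rule $\iota_{\sigma(D)}(\alpha\wedge\beta)=\iota_{\sigma(D)}\alpha\wedge\beta+(-1)^{\alpha}\alpha\wedge\iota_{\sigma(D)}\beta$ together with the $\mathcal{O}_X$-linearity of $\iota_{\sigma(D)}$ and of $\iota_D$, and the push-out relation $\gamma\otimes(\beta\otimes e)=(\gamma\wedge\beta)\otimes e$ holding in $(\pr_*\ev^*\mathcal{E})^{k}$ for $\gamma\in\Omega^k_X$.

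For (1) I will show that \eqref{iota D}, viewed as a map $\Omega^{k+1}_X\otimes_{\mathcal{O}_X}(\pr_*\ev^*\mathcal{E})^0\to(\pr_*\ev^*\mathcal{E})^k$, annihilates the kernel of the canonical surjection $\Omega^{k+1}_X\otimes_{\mathcal{O}_X}(\pr_*\ev^*\mathcal{E})^0\twoheadrightarrow(\pr_*\ev^*\mathcal{E})^{k+1}$; surjectivity holds because $\wedge\colon\Omega^{k+1}_X\otimes\Omega^1_X\to\Omega^{k+2}_X$ is locally onto. By the degree-$(k+1)$ push-out this kernel is the image of $\ker(\wedge\colon\Omega^{k+1}_X\otimes\Omega^1_X\otimes\mathcal{E}\to\Omega^{k+2}_X\otimes\mathcal{E})$ under the inclusion of the symbol part. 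Evaluating $\widetilde{\iota_D}$ on the image of $\alpha\otimes\beta\otimes e$, the push-out relation rewrites the first term as $(\iota_{\sigma(D)}\alpha\wedge\beta)\otimes e$ and the formula for $\iota_D$ rewrites the second as $(-1)^{\alpha}(\iota_{\sigma(D)}\beta)\,\alpha\otimes e$; the Leibniz rule then collapses the sum into the single term $\iota_{\sigma(D)}(\alpha\wedge\beta)\otimes e$. Since $\iota_{\sigma(D)}\otimes\id_\mathcal{E}$ is $\mathcal{O}_X$-linear, this vanishes exactly when $\alpha\wedge\beta\otimes e\in\ker(\wedge)$, which gives the descent. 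I expect this collapsing of the two legs of the push-out into a single contraction to be the crux of the argument, and the only point where the presentation must be handled with care.

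For (2), $\widetilde{\iota_D}$ is now a $\mathbb{C}$-linear endomorphism of $\pr_*\ev^*\mathcal{E}$ of degree $-1$, and I will compute its graded commutator with multiplication by a homogeneous $\phi\in\mathcal{O}^p_{X^\sharp}=\Omega^p_X$. As the elements $\gamma\otimes B$ with $\gamma\in\Omega^k_X$ and $B\in(\pr_*\ev^*\mathcal{E})^0$ generate $(\pr_*\ev^*\mathcal{E})^k$, it suffices to test on these. Expanding $\widetilde{\iota_D}(\phi\wedge\gamma\otimes B)$ by the defining formula, applying the Leibniz rule to $\iota_{\sigma(D)}(\phi\wedge\gamma)$, and subtracting $(-1)^{p}\phi\cdot\widetilde{\iota_D}(\gamma\otimes B)$, every term containing $\iota_{\sigma(D)}\gamma$ or $\iota_D B$ cancels, leaving $[\widetilde{\iota_D},\phi](\gamma\otimes B)=(\iota_{\sigma(D)}\phi)\wedge\gamma\otimes B=(\iota_{\sigma(D)}\phi)\cdot(\gamma\otimes B)$. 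Hence $[\widetilde{\iota_D},\phi]$ is multiplication by the section $\iota_{\sigma(D)}\phi$, so $\widetilde{\iota_D}$ is a differential operator of order one whose symbol is the degree $-1$ derivation $\phi\mapsto\iota_{\sigma(D)}\phi$ of $\mathcal{O}_{X^\sharp}$. Under the isomorphism $\pr_*\ev^*\mathcal{T}_X\cong\mathcal{T}_{X^\sharp}$, which carries $\xi\otimes\epsilon$ to $L_{\epsilon\cdot\xi}=\iota_\xi$, this derivation is precisely $\sigma(D)\otimes\epsilon$; as the commutator is multiplication (hence $\mathcal{O}_{X^\sharp}$-central) the operator has order one and the pair $(\widetilde{\iota_D},\sigma(D)\otimes\epsilon)$ lies in $\mathcal{A}^{-1}_{\pr_*\ev^*\mathcal{E}}$, as asserted.
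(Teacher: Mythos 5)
Your proof is correct, and part (2) coincides with the paper's own argument: the paper likewise computes the graded commutator $[\widetilde{\iota_D},\gamma]$ for $\gamma\in\mathcal{O}_{X^\sharp}$, finds it is multiplication by $\iota_{\sigma(D)}\gamma$, and identifies the resulting derivation with $\sigma(D)\otimes\epsilon$. For part (1), however, you and the paper verify different (and complementary) things. The paper's verification consists of the two computations $\widetilde{\iota_D}(f\alpha\otimes B)=f\,\widetilde{\iota_D}(\alpha\otimes B)=\widetilde{\iota_D}(\alpha\otimes fB)$, i.e.\ the $\mathcal{O}_X$-balance of the defining formula, which is what makes \eqref{iota D} well defined on the tensor product $\Omega^{k+1}_X\otimes_{\mathcal{O}_X}(\pr_*\ev^*\mathcal{E})^0$ in the first place. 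You instead take that domain as given and check descent along the surjection $\Omega^{k+1}_X\otimes_{\mathcal{O}_X}(\pr_*\ev^*\mathcal{E})^0\twoheadrightarrow(\pr_*\ev^*\mathcal{E})^{k+1}$, using the cocartesian square of Remark \ref{remark: prev and jets}: on the symbol subsheaf the two terms of \eqref{iota D} collapse to $\iota_{\sigma(D)}(\alpha\wedge\beta)\otimes e$, which is $\mathcal{O}_X$-linear in $\alpha\wedge\beta\otimes e$ and hence kills $\ker(\wedge\otimes\id_{\mathcal{E}})$. This is the computation that directly establishes the factorization asserted in (1) — the paper leaves it implicit — so your route is, if anything, closer to the literal statement; a fully spelled-out proof would record both the balance check and your descent check. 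Your identification $\iota_D(\beta\otimes e)=(\iota_{\sigma(D)}\beta)\,e$ on the symbol part, via $\iota_D(df\otimes e)=D(fe)-fD(e)=\sigma(D)(f)e$, is exactly the right way to make the second term computable, and the rest of the bookkeeping (signs, the push-out relation $\gamma\otimes(\beta\otimes e)=(\gamma\wedge\beta)\otimes e$ in degree $k$) is accurate.
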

\begin{proof}
The proof is given in \ref{subsection: proof of iota descends}.
\end{proof}

Let
\[
\widetilde{D} = [\partial, \widetilde{\iota_D}] .
\]
Thus, $\widetilde{D} \colon \pr_*\ev^*\mathcal{E} \to
\pr_*\ev^*\mathcal{E}$ is a differential operator or order one
with respect to the $\mathcal{O}_{X^\sharp}$-module structure with
principal symbol $L_{\sigma(D)} = [\partial, \sigma(D)\epsilon]$.
In particular, $\widetilde{D} \in
\mathcal{A}^0_{\pr_*\ev^*\mathcal{E}}$. Thus, the assignment $D
\mapsto \widetilde{D}$ defines a map
\begin{equation}\label{LA action on transgression}
\mathcal{A}_\mathcal{E} \to \mathcal{A}_{\pr_*\ev^*\mathcal{E}} .
\end{equation}

\begin{lemma}\label{lemma: Atiyah action}
{~}
\begin{enumerate}
\item For $D\in\mathcal{A}_\mathcal{E}$, $A\in\mathcal{O}_{X^\sharp}[\epsilon]$ and $e\in\mathcal{E}$
\[
\widetilde{D}(A\otimes e) = L_{\sigma(D)} A\otimes e + A\otimes D(e)
\]

\item The map $\mathcal{A}_\mathcal{E} \xrightarrow{\eqref{LA action on transgression}} \mathcal{A}_{\pr_*\ev^*\mathcal{E}}$ is a morphism
of Lie algebras.

\item The diagram
\begin{equation}\label{extension of Atiyah action commutes with the anchors}
\begin{CD}
\mathcal{A}_\mathcal{E} @>{\eqref{LA action on transgression}}>> \mathcal{A}_{\pr_*\ev^*\mathcal{E}} \\
@V{\sigma}VV @VV{\sigma}V \\
\mathcal{T}_X @>{L}>> \mathcal{T}_{X^\sharp}
\end{CD}
\end{equation}
is commutative.

\item The induced map $\shEnd_{\mathcal{O}_X}(\mathcal{E}) \to
\shEnd_{\mathcal{O}_{X^\sharp}}(\pr_*\ev^*\mathcal{E})$ between
the kernels of respective anchor maps coincides with the map
induced by the functor $\pr_*\ev^*$.
\end{enumerate}
\end{lemma}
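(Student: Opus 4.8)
The plan is to establish (1) by direct computation and then to derive (2)--(4) as formal consequences.

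For (1) I would start from the definition $\widetilde{D} = [\partial, \widetilde{\iota_D}]$. Since $\widetilde{\iota_D}$ has degree $-1$, the graded commutator reads $\widetilde{D} = \partial\circ\widetilde{\iota_D} + \widetilde{\iota_D}\circ\partial$, and I would evaluate both composites on a general element $A\otimes e$ of $\pr_*\ev^*\mathcal{E} = \mathcal{O}_{X^\sharp}[\epsilon]\otimes_{\mathcal{O}_X}\mathcal{E}$, using the explicit formula $\widetilde{\iota_D}(\alpha\otimes B) = \iota_{\sigma(D)}\alpha\otimes B + (-1)^{\alpha}\alpha\otimes\iota_D B$ together with the description of $\partial$ from Remark \ref{remark: prev and jets} as the operator induced by the canonical flat connection on jets. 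Two structural identities drive the cancellation of the cross terms: the defining relation $\iota_D\circ\partial = D$ for $\iota_D$, which produces the summand $A\otimes D(e)$, and Cartan's formula $[\partial,\iota_{\sigma(D)}] = L_{\sigma(D)}$ on the form factor (already recorded in the text as $L_{\sigma(D)} = [\partial, \sigma(D)\epsilon]$), which produces the summand $L_{\sigma(D)}A\otimes e$. Since both sides are additive in $A\otimes e$ and such elements span $\pr_*\ev^*\mathcal{E}$, this verifies the asserted formula.

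Once (1) is in hand, part (3) follows by computing the anchor of $\widetilde{D}$: for $f\in\mathcal{O}_{X^\sharp}$, applying (1) to $(fA)\otimes e$ and using the Leibniz rule for the Lie derivative gives $[\widetilde{D}, f](A\otimes e) = L_{\sigma(D)}(f)\cdot(A\otimes e)$, so that $\sigma(\widetilde{D}) = L_{\sigma(D)} = L(\sigma(D))$, which is exactly the commutativity of the diagram \eqref{extension of Atiyah action commutes with the anchors}. For part (2) I would evaluate $[\widetilde{D_1},\widetilde{D_2}]$ on $A\otimes e$ by applying (1) twice; the four mixed terms of the form $L_{\sigma(D_i)}A\otimes D_j e$ cancel in pairs, leaving $(L_{\sigma(D_1)}L_{\sigma(D_2)} - L_{\sigma(D_2)}L_{\sigma(D_1)})A\otimes e + A\otimes[D_1,D_2]e$. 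Invoking that $L$ is a morphism of Lie algebras, i.e. $[L_{\sigma(D_1)},L_{\sigma(D_2)}] = L_{[\sigma(D_1),\sigma(D_2)]}$, together with $\sigma([D_1,D_2]) = [\sigma(D_1),\sigma(D_2)]$, this becomes $L_{\sigma([D_1,D_2])}A\otimes e + A\otimes[D_1,D_2]e$, which by (1) is precisely $\widetilde{[D_1,D_2]}(A\otimes e)$. Finally, for part (4), an element $P\in\shEnd_{\mathcal{O}_X}(\mathcal{E})$ has $\sigma(P)=0$, so by (3) its image $\widetilde{P}$ lies in $\ker\sigma = \shEnd_{\mathcal{O}_{X^\sharp}}(\pr_*\ev^*\mathcal{E})$, and by (1) one has $\widetilde{P}(A\otimes e) = A\otimes P(e)$, which is exactly the action of $\pr_*\ev^* P = \id\otimes P$.

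The main obstacle is part (1): assembling the explicit differential $\partial$ through the jet/connection description of Remark \ref{remark: prev and jets} with the formula for $\widetilde{\iota_D}$ and checking that the sign-laden cross terms recombine via Cartan's formula. The bookkeeping of signs arising from the degree shifts in $\mathcal{O}_{X^\sharp}[\epsilon]$ is the delicate point; once the formula in (1) is secured, parts (2)--(4) are immediate.
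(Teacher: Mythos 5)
Your proposal is correct and follows essentially the same route as the paper: part (1) is proved by expanding the graded commutator $[\partial,\widetilde{\iota_D}]$ on $A\otimes e$ and recombining the form-factor terms via Cartan's formula, and parts (2)--(4) are then derived formally from (1) exactly as in the paper (which in fact leaves (3) and (4) to the reader with the same one-line justifications you supply).
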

\begin{proof}
The proof is given in \ref{subsection: proof of Atiyah action}
\end{proof}

Let
\begin{equation}\label{pre Lie-E}
L^\mathcal{E} \colon \mathcal{O}_{X^\sharp}[\epsilon]\otimes_\mathbb{C}\mathcal{A}_\mathcal{E} \to \mathcal{A}_{\pr_*\ev^*\mathcal{E}}
\end{equation}
denote the $\mathcal{O}_{X^\sharp}$-linear extension of the map
$L^\mathcal{E} \colon
\mathbb{C}[\epsilon]\otimes_\mathbb{C}\mathcal{A}_\mathcal{E} \to
\mathcal{A}_{\pr_*\ev^*\mathcal{E}}$ defined by $D \mapsto
L^\mathcal{E}_D = \widetilde{D}$ and $\epsilon\cdot D \mapsto
L^\mathcal{E}_{\epsilon\cdot D} = \widetilde{\iota_D}$.

\begin{lemma}\label{lemma: morphism of LA}
{~}
\begin{enumerate}
\item The map \eqref{pre Lie-E} descends to the map
\begin{equation}\label{Lie-E}
L^\mathcal{E} \colon \mathcal{A}_\mathcal{E}^\sharp \to \mathcal{A}_{\pr_*\ev^*\mathcal{E}} .
\end{equation}

\item The map \eqref{Lie-E} is a morphism of $\mathcal{O}_{X^\sharp}$-Lie algebroids.
\end{enumerate}
\end{lemma}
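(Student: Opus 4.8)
The plan is to prove the two parts in turn, reducing everything to the generators $1\otimes D$ and $\epsilon\otimes D$, $D\in\mathcal{A}_\mathcal{E}$, of $\pr_*\ev^*\mathcal{A}_\mathcal{E}$ and feeding in the formulas already established for $\widetilde{D}$ and $\widetilde{\iota_D}$. For part (1) I must show that the $\mathcal{O}_{X^\sharp}$-linear map \eqref{pre Lie-E} is balanced over $\mathcal{O}_X$, which acts on $\mathcal{O}_{X^\sharp}[\epsilon]$ through $\ev^*(f)=f+df\cdot\epsilon$. By $\mathcal{O}_{X^\sharp}$-linearity it is enough to test the defining relation $a\cdot\ev^*(f)\otimes D=a\otimes fD$ on $a\in\{1,\epsilon\}$. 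Since $\epsilon\cdot\ev^*(f)=f\epsilon$ and $1\cdot\ev^*(f)=f+df\cdot\epsilon$, this amounts to the two identities
\[
\widetilde{\iota_{fD}}=f\cdot\widetilde{\iota_D}, \qquad \widetilde{fD}=f\cdot\widetilde{D}+df\cdot\widetilde{\iota_D}.
\]
The first is immediate from $\sigma(fD)=f\sigma(D)$, $\iota_{fD}=f\iota_D$ and the formula \eqref{iota D}, once one notices that in the cocartesian description of the nonnegative components of $\pr_*\ev^*\mathcal{E}$ in Remark \ref{remark: prev and jets} the factors $\Omega^{k+1}_X\otimes_{\mathcal{O}_X}\mathcal{E}$ are the \emph{untwisted} tensor products, so that $\alpha\otimes f\iota_D B=f\alpha\otimes\iota_D B$ with no stray $\epsilon$-term.

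Granting the first identity, the second is formal: $\widetilde{fD}=[\partial,\widetilde{\iota_{fD}}]=[\partial,f\cdot\widetilde{\iota_D}]$, and since $\partial$ differentiates over $(\mathcal{O}_{X^\sharp},d)$, i.e. $\partial(f\cdot s)=df\cdot s+f\cdot\partial s$ for $f$ of degree zero, the graded Leibniz rule yields $[\partial,f\cdot\widetilde{\iota_D}]=df\cdot\widetilde{\iota_D}+f\cdot[\partial,\widetilde{\iota_D}]=df\cdot\widetilde{\iota_D}+f\cdot\widetilde{D}$. This establishes (1) and produces \eqref{Lie-E}. The map \eqref{Lie-E} is $\mathcal{O}_{X^\sharp}$-linear by construction, so for (2) only compatibility with the anchors and the brackets remains, and by $\mathcal{O}_{X^\sharp}$-linearity together with the Leibniz rule it suffices to check these on the generators above. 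Anchor-compatibility is then a restatement of what is already known: the principal symbol of $\widetilde{D}$ is $L_{\sigma(D)}$ and that of $\widetilde{\iota_D}$ is $\sigma(D)\otimes\epsilon=\iota_{\sigma(D)}$ (the discussion before \eqref{LA action on transgression} and Lemma \ref{lemma: iota descends}(2)), which are exactly the values on $1\otimes D$ and $\epsilon\otimes D$ of the anchor $\widetilde{\sigma}$ of $\mathcal{A}_\mathcal{E}^\sharp$ built from \eqref{cone to T}; compare diagram \eqref{extension of Atiyah action commutes with the anchors}.

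For bracket-compatibility I would evaluate the transgression bracket of \ref{subsection: Transgression for Lie algebroids} on pairs of generators with $\omega_1=\omega_2=1$; since $L_\xi 1=\iota_\xi 1=0$ this collapses to $[1\otimes D_1,1\otimes D_2]=1\otimes[D_1,D_2]$, $[1\otimes D_1,\epsilon\otimes D_2]=\epsilon\otimes[D_1,D_2]$ and $[\epsilon\otimes D_1,\epsilon\otimes D_2]=0$. Applying $L^\mathcal{E}$, the three required equalities read
\[
[\widetilde{D_1},\widetilde{D_2}]=\widetilde{[D_1,D_2]},\qquad [\widetilde{D_1},\widetilde{\iota_{D_2}}]=\widetilde{\iota_{[D_1,D_2]}},\qquad \{\widetilde{\iota_{D_1}},\widetilde{\iota_{D_2}}\}=0,
\]
with (super)commutators in $\mathcal{A}_{\pr_*\ev^*\mathcal{E}}$ on the left. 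The first is exactly Lemma \ref{lemma: Atiyah action}(2). For the remaining two I would argue by symbols and generators rather than by brute force. In each case the left-hand side is an element of the Atiyah algebra whose symbol is computed by the Cartan relations in $\mathcal{T}_{X^\sharp}$: for the mixed bracket $[L_{\sigma(D_1)},\iota_{\sigma(D_2)}]=\iota_{\sigma([D_1,D_2])}$, matching the symbol of $\widetilde{\iota_{[D_1,D_2]}}$, while for the double bracket $\{\iota_{\sigma(D_1)},\iota_{\sigma(D_2)}\}=0$ and moreover $\mathcal{T}_{X^\sharp}$ vanishes in degree $-2$; hence in both cases the discrepancy $\Psi$ is an $\mathcal{O}_{X^\sharp}$-linear endomorphism.

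It then remains to see that $\Psi$ vanishes, and here $\pr_*\ev^*\mathcal{E}$ is generated over $\mathcal{O}_{X^\sharp}$ by $\mathcal{E}=(\pr_*\ev^*\mathcal{E})^{-1}$ together with $\partial\mathcal{E}$. For the mixed case, $[\partial,\widetilde{D_1}]=0$ (from $\partial^2=0$) and $[\partial,\widetilde{\iota_{D_2}}]=\widetilde{D_2}$ give, via the graded Jacobi identity and Lemma \ref{lemma: Atiyah action}(2), that $\Psi$ is $\partial$-closed; as it has degree $-1$ it kills $\mathcal{E}$ for degree reasons and hence kills $\partial\mathcal{E}$ too, so $\Psi=0$. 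For the double bracket the degree $-2$ endomorphism $\{\widetilde{\iota_{D_1}},\widetilde{\iota_{D_2}}\}$ annihilates $\mathcal{E}$ because $\widetilde{\iota_{D}}$ does (it lowers degree below $-1$), and annihilates $\partial\mathcal{E}$ for degree reasons, whence it too vanishes. The main obstacle, and the part I would record in Section \ref{section: proofs}, is precisely the careful execution of these symbol computations and generator checks through the jet/cocartesian identification of $\pr_*\ev^*\mathcal{E}$, where the signs and the $\ev^*$-twist must be tracked with care.
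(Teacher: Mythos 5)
Your proposal is correct, and for part (1) it follows the paper's route exactly: the paper also reduces descent to the two identities $\widetilde{\iota_{fD}}=f\,\widetilde{\iota_D}$ and $\widetilde{fD}=f\widetilde{D}+df\cdot\widetilde{\iota_D}$ (the latter is merely asserted there, whereas you derive it from $\widetilde{fD}=[\partial,\widetilde{\iota_{fD}}]$ and the graded Leibniz rule for $\partial$ over $(\mathcal{O}_{X^\sharp},d)$ --- a clean improvement). For part (2) your route genuinely differs. The paper expands the transgression bracket of two general elements $\omega_i\otimes D_i+\gamma_i\epsilon\otimes D_i$, applies $L^\mathcal{E}$ term by term, and matches the result against $[L^\mathcal{E}(\cdot),L^\mathcal{E}(\cdot)]$; in doing so it invokes, but explicitly leaves to the reader, the two identities $[\widetilde{D_1},\widetilde{\iota_{D_2}}]=\widetilde{\iota_{[D_1,D_2]}}$ and $[\widetilde{\iota_{D_1}},\widetilde{\iota_{D_2}}]=0$ (it also leaves anchor compatibility to the reader). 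You instead first establish anchor compatibility, use $\mathcal{O}_{X^\sharp}$-linearity and the Leibniz rule to reduce bracket compatibility to the generators $1\otimes D$ and $\epsilon\otimes D$, and then prove the two outstanding commutator identities by a symbol-plus-generators argument: the symbols match by the Cartan relations, so the discrepancy is $\mathcal{O}_{X^\sharp}$-linear, is killed by $[\partial,\,\cdot\,]$ (graded Jacobi plus $[\partial,\widetilde{D_1}]=0$), and vanishes on $\mathcal{E}$ and $\partial\mathcal{E}$ for degree reasons, hence is zero. This buys a shorter, sign-robust verification and actually closes the gaps the paper delegates to the reader, at the cost of relying on the (correct) observations that $\pr_*\ev^*\mathcal{E}$ is generated over $\mathcal{O}_{X^\sharp}$ by $\mathcal{E}$ and $\partial\mathcal{E}$ and is concentrated in degrees $\geqslant -1$; the paper's brute-force expansion, by contrast, makes no structural assumptions but is considerably longer and less self-contained.
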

\begin{proof}
The proof is given in \ref{subsection: proof of morphism of LA}.
\end{proof}

Suppose that $\mathcal{A}$ is an $\mathcal{O}_X$-Lie algebroid and
$\mathcal{E}$ is an $\mathcal{A}$-module with the module structure
given by the morphism $\alpha \colon \mathcal{A} \to
\mathcal{A}_\mathcal{E}$. The composition $\mathcal{A}^\sharp
\xrightarrow{\alpha^\sharp} \mathcal{A}_\mathcal{E}^\sharp
\xrightarrow{\eqref{Lie-E}} \mathcal{A}_{\pr_*\ev^*\mathcal{E}}$
endows $\pr_*\ev^*\mathcal{E}$ with a canonical structure of an
$\mathcal{A}^\sharp$-module. Thus, the functor \eqref{pr-ev for
O-mod} induces the functor
\[
\pr_*\ev^* \colon \Mod{\mathcal{A}} \to \Mod{\mathcal{A}^\sharp} .
\]

\subsection{Marked Lie algebroids}
Suppose that $\mathfrak{X} = (X, \mathcal{O}_\mathfrak{X})$ is a DG-manifold.

\begin{lemma}\label{lemma: marking in ker of anchor}
Suppose that $\mathcal{A}$ is a $\mathcal{O}_\mathfrak{X}$-Lie
algebroid. Any (homogeneous) central section
$\mathfrak{c}\in\Gamma(X;\mathcal{A})$ (i.e. $[\mathfrak{c},
\mathcal{A}] = 0$) belongs to the kernel of the anchor map.
\end{lemma}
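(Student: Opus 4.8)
The plan is to read the vanishing of the anchor on a central element straight off the Leibniz rule: centrality annihilates every genuine bracket term and leaves only the derivation term, which is precisely what the anchor controls. First I would fix a homogeneous central section $\mathfrak{c}$, an arbitrary local section $b\in\mathcal{A}$ and an arbitrary function $f\in\mathcal{O}_\mathfrak{X}$, and expand by the (graded) Leibniz rule
\[
[\mathfrak{c},\, f\cdot b] = \sigma(\mathfrak{c})(f)\cdot b + (-1)^{\mathfrak{c} f}\, f\cdot[\mathfrak{c}, b].
\]
Since $[\mathfrak{c},-]$ vanishes identically, both $[\mathfrak{c}, f\cdot b]$ and $[\mathfrak{c}, b]$ are zero, so the identity collapses to
\[
\sigma(\mathfrak{c})(f)\cdot b = 0 ,
\]
valid for every $f\in\mathcal{O}_\mathfrak{X}$ and every $b\in\mathcal{A}$. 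The graded sign in front of the second term is immaterial precisely because that term is killed by centrality, so no care with the conventions of Remark \ref{remark: bracket in negative degrees} is needed here.

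It remains to convert this annihilation statement into the assertion that $\sigma(\mathfrak{c})$ is the zero vector field. The identity says that, for each $f$, the function $\sigma(\mathfrak{c})(f)$ lies in the annihilator of $\mathcal{A}$ as an $\mathcal{O}_\mathfrak{X}$-module. When $\mathcal{A}$ is a vector bundle, i.e. locally free of positive rank, this annihilator is trivial: choosing a local frame $\{e_i\}$, the relations $\sigma(\mathfrak{c})(f)\cdot e_i = 0$ express the coefficient $\sigma(\mathfrak{c})(f)$ as a trivial combination in a free module, whence $\sigma(\mathfrak{c})(f)=0$. As $f$ is arbitrary, the derivation $\sigma(\mathfrak{c})$ vanishes, which is the claim.

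The main obstacle is exactly this final passage, and it is an honest use of a faithfulness hypothesis on $\mathcal{A}$: the implication that $\sigma(\mathfrak{c})(f)$ annihilating $\mathcal{A}$ forces $\sigma(\mathfrak{c})(f)=0$ holds precisely when $\mathcal{A}$ has trivial annihilator, which is automatic for a vector bundle but not for an arbitrary $\mathcal{O}_\mathfrak{X}$-module. One cannot sidestep faithfulness by evaluating at points of $X$: the function $\sigma(\mathfrak{c})(f)$ is homogeneous of degree $\deg(\mathfrak{c})+\deg(f)$, and in nonzero degree it may be a nonzero nilpotent of $\mathcal{O}_\mathfrak{X}$ that nonetheless vanishes at every point, so pointwise evaluation yields nothing. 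For this reason I would avoid any Nakayama-type fibrewise reduction and instead argue through a local frame as above, an argument insensitive both to the grading and to the presence of nilpotents.
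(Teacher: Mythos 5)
Your derivation of the key identity coincides with the paper's: centrality plus the Leibniz rule applied to $[\mathfrak{c}, f\cdot b]$ gives $\sigma(\mathfrak{c})(f)\cdot b = 0$ for all $f$ and $b$, the sign on the second term being irrelevant because that term vanishes. The divergence is in the concluding step, and it is a real limitation rather than just a stylistic choice. You pass from ``$\sigma(\mathfrak{c})(f)$ annihilates $\mathcal{A}$'' to ``$\sigma(\mathfrak{c})(f)=0$'' by assuming $\mathcal{A}$ is locally free of positive rank, and you flag this honestly; but the lemma cannot afford that hypothesis. The paper's Lie algebroids are defined on arbitrary $\mathcal{O}_\mathfrak{X}$-modules, and the marked Lie algebroids to which this lemma is actually applied --- $\tau\mathcal{Q}$, constructed as a cokernel in \eqref{defn tauQ coCart square}, and the objects of $\OExt{n}(\mathcal{B})$ --- are not presented as locally free $\mathcal{O}_{X^\sharp}$-modules, so ``choose a local frame'' is not available there. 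The weaker condition you correctly isolate (trivial annihilator) does hold in those applications, because such an $\mathcal{A}$ contains the submodule $\mathcal{O}_\mathfrak{X}\cdot\mathfrak{c}$, but your proof as written neither states nor uses that reduction.

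The paper closes the argument without any hypothesis on the module $\mathcal{A}$: from the same identity $\sigma(\mathfrak{c})(\gamma)\cdot a = 0$ it applies the ($\mathcal{O}_\mathfrak{X}$-linear) anchor to obtain $\sigma(\mathfrak{c})(\gamma)\cdot\sigma(a) = 0$ in $\mathcal{T}_\mathfrak{X}$ and then specializes to $a=\mathfrak{c}$, so that the remaining obstruction lives entirely among functions and vector fields rather than in the unknown module $\mathcal{A}$. If you want to keep the shape of your argument, the cheapest repair is to take $b=\mathfrak{c}$ itself in your identity, giving $\sigma(\mathfrak{c})(f)\cdot\mathfrak{c}=0$; this suffices whenever $\gamma\mapsto\gamma\cdot\mathfrak{c}$ is injective, which is exactly the situation of the $\mathcal{O}_\mathfrak{X}[n]$-extensions where the lemma is used, and it avoids local freeness altogether.
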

\begin{proof}
For $a\in\mathcal{A}$, $\gamma\in\mathcal{O}_{X^{\sharp}}$, the calculation
\[
0=[\mathfrak{c},\gamma\cdot a]=\sigma(\mathfrak{c})(\gamma)\cdot a
\]
implies that $\sigma(\mathfrak{c})\gamma=0$. Applying the anchor
map we find that $\sigma(\mathfrak{c})(\gamma)\cdot \sigma(a) = 0$
for all $a\in\mathcal{A}$ and $\gamma\in\mathcal{O}_{X^{\sharp}}$.
With $a = \mathfrak{c}$ this means that the vector field
$\sigma(\mathfrak{c})$ annihilates $\mathcal{O}_{X^{\sharp}}$.
\end{proof}

\begin{definition}\label{defn: marked Lie algd}
A \emph{marked $\mathcal{O}_\mathfrak{X}$-Lie algebroid} is a pair
$(\mathcal{A}, \mathfrak{c})$, where $\mathcal{A}$ is a
$\mathcal{O}_\mathfrak{X}$-Lie algebroid and
$\mathfrak{c}\in\Gamma(X;\mathcal{A})$ is a homogeneous central
section.
\end{definition}

A morphism $\phi\colon (\mathcal{A}_1, \mathfrak{c}_1) \to
(\mathcal{A}_2, \mathfrak{c}_2)$ is a morphism of Lie algebroids
$\phi\colon \mathcal{A}_1 \to \mathcal{A}_2$ such that
$\phi(\mathfrak{c}_1) = \mathfrak{c}_2$. In particular,
$\mathfrak{c}_1$ and $\mathfrak{c}_2$ have the same degree.

With the above definitions marked $\mathcal{O}_\mathfrak{X}$-Lie
algebroids and morphisms thereof form a category denoted
$\LA{\mathcal{O}_\mathfrak{X}}^\star$. The full subcategory of
marked $\mathcal{O}_\mathfrak{X}$-Lie algebroids $(\mathcal{A},
\mathfrak{c})$ with $\deg\mathfrak{c} = k$ is denoted
$\LA{\mathcal{O}_\mathfrak{X}}^\star_k$.

For a marked Lie algebroid $(\mathcal{A}, \mathfrak{c})$ with $\deg\mathfrak{c} = n$ let
\[
\overline{\mathcal{A}} := \coker(\mathcal{O}_\mathfrak{X}[n] \xrightarrow{\cdot\mathfrak{c}} \mathcal{A}) .
\]

\begin{lemma}
The structure of a Lie algebroid on $\mathcal{A}$ descends to one on $\overline{\mathcal{A}}$.
\end{lemma}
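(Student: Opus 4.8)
The plan is to show that the Lie algebroid structure on $\mathcal{A}$ passes to the quotient $\overline{\mathcal{A}} = \coker(\mathcal{O}_\mathfrak{X}[n] \xrightarrow{\cdot\mathfrak{c}} \mathcal{A})$. The key point is Lemma \ref{lemma: marking in ker of anchor}, which tells us that the central section $\mathfrak{c}$ lies in the kernel of the anchor $\sigma$. First I would observe that $\mathcal{O}_\mathfrak{X}[n]\cdot\mathfrak{c}$ is an $\mathcal{O}_\mathfrak{X}$-submodule of $\mathcal{A}$, so the cokernel $\overline{\mathcal{A}}$ is a well-defined $\mathcal{O}_\mathfrak{X}$-module, and the only task is to check that the anchor and the bracket descend. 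For the anchor this is immediate: since $\sigma(\mathfrak{c}) = 0$ by Lemma \ref{lemma: marking in ker of anchor}, the $\mathcal{O}_\mathfrak{X}$-linear map $\sigma$ kills the submodule $\mathcal{O}_\mathfrak{X}[n]\cdot\mathfrak{c}$ (using $\mathcal{O}_\mathfrak{X}$-linearity, $\sigma(f\cdot\mathfrak{c}) = f\cdot\sigma(\mathfrak{c}) = 0$), so $\sigma$ factors through $\overline{\mathcal{A}} \to \mathcal{T}_\mathfrak{X}$.

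Next I would verify that the bracket descends. Concretely, I want to show that for any $a\in\mathcal{A}$ and any $f\in\mathcal{O}_\mathfrak{X}$, the bracket $[a,\, f\cdot\mathfrak{c}]$ again lies in the submodule $\mathcal{O}_\mathfrak{X}\cdot\mathfrak{c}$; by antisymmetry the same will hold for $[f\cdot\mathfrak{c},\, a]$, and this is exactly the condition needed for the bracket to be well-defined on the quotient. Here is where I would combine both properties of $\mathfrak{c}$: applying the Leibniz rule gives
\[
[a,\, f\cdot\mathfrak{c}] = \sigma(a)(f)\cdot\mathfrak{c} + (-1)^{\star}f\cdot[a,\mathfrak{c}],
\]
where the sign is the Koszul sign dictated by the degrees of $a$ and $f$. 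Since $\mathfrak{c}$ is central, $[a,\mathfrak{c}] = 0$ (up to sign), so the second term vanishes and we are left with $\sigma(a)(f)\cdot\mathfrak{c}$, which manifestly lies in $\mathcal{O}_\mathfrak{X}\cdot\mathfrak{c}$. Thus the submodule generated by $\mathfrak{c}$ is an ideal for the bracket, and the bracket descends to a $\mathbb{C}$-bilinear operation on $\overline{\mathcal{A}}$.

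The remaining step is to confirm that the descended anchor and bracket satisfy the Lie algebroid axioms on $\overline{\mathcal{A}}$. This is essentially automatic: the Jacobi identity, the antisymmetry, the fact that $\sigma$ is a morphism of Lie algebras, and the Leibniz rule are all identities among elements of $\mathcal{A}$, and they continue to hold after applying the surjective $\mathbb{C}$-linear (indeed $\mathcal{O}_\mathfrak{X}$-linear) projection $\mathcal{A}\to\overline{\mathcal{A}}$, since the structure maps have been shown to be compatible with this projection. I expect the only genuine point requiring care — the main obstacle, such as it is — to be the bracket computation above, namely verifying that $\mathcal{O}_\mathfrak{X}\cdot\mathfrak{c}$ is preserved under bracketing with arbitrary sections; everything else is formal once that is in hand. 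I would note that both the centrality of $\mathfrak{c}$ and the vanishing $\sigma(\mathfrak{c}) = 0$ are used, and that centrality alone already forces $\sigma(\mathfrak{c}) = 0$ by Lemma \ref{lemma: marking in ker of anchor}, so the definition of marked Lie algebroid supplies exactly the data needed.
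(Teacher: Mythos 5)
Your proposal is correct and follows essentially the same route as the paper: the anchor descends because $\sigma(\mathfrak{c})=0$ by Lemma \ref{lemma: marking in ker of anchor}, and the bracket descends because centrality of $\mathfrak{c}$ together with the Leibniz rule gives $[\mathcal{A},\mathcal{O}_\mathfrak{X}\cdot\mathfrak{c}]\subseteq\mathcal{O}_\mathfrak{X}\cdot\mathfrak{c}$. The paper states this more tersely, but the Leibniz-rule computation you spell out is exactly the content of its assertion.
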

\begin{proof}
Since $\mathfrak{c}$ is central the submodule
$\mathcal{O}_{\mathfrak{X}}\cdot\mathfrak{c}$
satisfies
$[\mathcal{O}_{\mathfrak{X}}\cdot\mathfrak{c},
\mathcal{A}]\subseteq
\mathcal{O}_{\mathfrak{X}}\cdot\mathfrak{c}$. Moreover, by Lemma
\ref{lemma: marking in ker of anchor} the anchor map factors
through $\overline{\mathcal{A}}$.
\end{proof}

\subsection{$\mathcal{O}_\mathfrak{X}[n]$-extensions}
Suppose that $\mathcal{B}$ is a $\mathcal{O}_\mathfrak{X}$-Lie algebroid.

\begin{definition}
An $\mathcal{O}_\mathfrak{X}[n]$-extension of $\mathcal{B}$ is a
marked $\mathcal{O}_\mathfrak{X}$-Lie algebroid $(\mathcal{A},
\mathfrak{c})$ with $\deg\mathfrak{c} = n$ together with the
identification $\overline{\mathcal{A}} \cong \mathcal{B}$ such
that the sequence
\[
0 \to \mathcal{O}_\mathfrak{X}[n] \xrightarrow{\cdot\mathfrak{c}} \mathcal{A} \to \mathcal{B} \to 0
\]
is exact.
\end{definition}
A morphism of $\mathcal{O}_\mathfrak{X}[n]$-extensions of
$\mathcal{B}$ is a morphism of marked Lie algebroids which induces
the identity map on $\mathcal{B}$.

We denote the category of $\mathcal{O}_\mathfrak{X}[n]$-extensions of $\mathcal{B}$ by $\OExt{n}(\mathcal{B})$.

It follows from the definitions that a morphism of
$\mathcal{O}_\mathfrak{X}[n]$-extensions of $\mathcal{B}$ induces
a morphism of extensions of $\mathcal{B}$ by
$\mathcal{O}_\mathfrak{X}[n]$ (in the abelian category of
$\mathcal{O}_\mathfrak{X}$-modules), hence is an isomorphism of
$\mathcal{O}_\mathfrak{X}$-modules. The inverse map is easily seen
to be a morphism of $\mathcal{O}_\mathfrak{X}[n]$-extensions. It
follows that the category $\OExt{n}(\mathcal{B})$ is a groupoid.

\section{Courant algebroids}\label{section: courant algebroids}

\begin{definition}\label{defn: courant}
A $k$-dimensional Courant algebroid is an $\mathcal{O}_X$-module $\mathcal{Q}$ equipped with
\begin{enumerate}
\item a structure of a Leibniz $\mathbb{C}$-algebra
\[
\{\ ,\ \} \colon \mathcal{Q}\otimes_\mathbb{C}\mathcal{Q} \to \mathcal{Q} ;
\]

\item an $\mathcal{O}_X$-linear map of Leibniz algebras (the anchor map)
\[
\pi \colon \mathcal{Q} \to \mathcal{T}_X ;
\]

\item a symmetric $\mathcal{O}_X$-bilinear pairing
\[
\ip\colon \mathcal{Q}\otimes_{\mathcal{O}_X}\mathcal{Q} \to \Omega_X^{k-1} ;
\]

\item an $\mathcal{O}_X$-linear map
\[
\pi^\dagger \colon \Omega_X^k \to \mathcal{Q} .
\]
\end{enumerate}
These data are required to satisfy
\begin{eqnarray}
\pi\circ\pi^\dagger & = & 0 \label{complex}\\
\{q_1, fq_2\} & = & f\{q_1,q_2\} + \pi(q_1)(f)q_2 \label{leibniz}\\
\langle \{q,q_1\}, q_2 \rangle + \langle q_1, \{q, q_2\}\rangle & = & L_{\pi(q)}\langle q_1,q_2 \rangle \label{ip invariance}\\
\iota_{\pi(q)} \langle q_1,q_2 \rangle & = &  -\iota_{\pi(q_1)} \langle q,q_2 \rangle - \iota_{\pi(q_2)} \langle q,q_1 \rangle \label{ip jacobi}\\
\{q, \pi^\dagger(\alpha)\} & = & \pi^\dagger(L_{\pi(q)}(\alpha)) \label{forms left ideal}\\
\langle q, \pi^\dagger(\alpha) \rangle & = & \iota_{\pi(q)}\alpha \label{adjunction}\\
\{q_1, q_2\} + \{q_2, q_1\} & = & \pi^\dagger(d\langle q_1, q_2\rangle) \label{symmetrizer}
\end{eqnarray}
for $f\in \mathcal{O}_X$ and $q, q_1, q_2 \in \mathcal{Q}$.
\end{definition}

A morphism $\phi \colon \mathcal{Q}_1 \to \mathcal{Q}_2$ of
$k$-dimensional Courant algebroids on $X$ is an
$\mathcal{O}_X$-linear map of Leibniz $\mathbb{C}$-algebras such
that the diagram
\[
\begin{CD}
\Omega^k_X @>{\pi^\dagger_1}>> \mathcal{Q}_1 @>{\pi_1}>> \mathcal{T}_X \\
@| @V{\phi}VV @| \\
\Omega^k_X @>{\pi^\dagger_2}>> \mathcal{Q}_2 @>{\pi_2}>> \mathcal{T}_X
\end{CD}
\]
is commutative.

With the above definitions $k$-dimensional Courant algebroids on
$X$ and morphisms thereof form a category henceforth denoted
$\CA(X)_k$.

\begin{remark}
A zero-dimensional Courant algebroid is just a marked $\mathcal{O}_X$-Lie algebroid with the marking given by $\pi^\dagger$.

A one-dimensional Courant algebroid is a Courant algebroid in the sense of \cite{B} (and, in restricted setting, of \cite{LWX}, \cite{R} and \cite{S3}). In this case the axiom \eqref{ip jacobi} is redundant.
\end{remark}

\begin{remark}
The formula \eqref{ip jacobi} says that the graded
$\mathcal{O}_X$-module $\mathcal{Q}[1] \oplus
\bigoplus\limits_{i=1}^{k}\Omega_X^{k-i}[i+1]$ equipped with the
bracket whose only non-zero component is $\ip \colon
\mathcal{Q}[1]\otimes\mathcal{Q}[1] \to \Omega_X^{k-1}[2]$ is a
graded Lie algebra.
\end{remark}

\subsection{The associated Lie algebroid}
Suppose that $\mathcal{Q}$ is a $k$-dimensional Courant algebroid on $X$. Let
\[
\overline{\mathcal{Q}} = \coker(\pi^\dagger) .
\]
The formula \eqref{forms left ideal} implies that
$\{\mathcal{Q},\im(\pi^\dagger)\} \subseteq \im(\pi^\dagger)$.
Formulas \eqref{forms left ideal}, \eqref{symmetrizer} and
\eqref{adjunction} imply that $\{\im(\pi^\dagger),\mathcal{Q}\}
\subseteq \im(\pi^\dagger)$. Therefore, the bracket on
$\mathcal{Q}$ descends to a bilinear operation
\begin{equation}\label{assoc lie bracket}
\{\ ,\ \} \colon \overline{\mathcal{Q}}\otimes_\mathbb{C}\overline{\mathcal{Q}} \to \overline{\mathcal{Q}} .
\end{equation}
The condition \eqref{complex} implies that the anchor map $\pi \colon \mathcal{Q} \to \mathcal{T}_X$ factors through
\begin{equation}\label{assoc lie anchor}
\pi \colon \overline{\mathcal{Q}} \to \mathcal{T}_X
\end{equation}

\begin{lemma}
The bracket \eqref{assoc lie bracket} and the anchor \eqref{assoc lie anchor} determine a structure of a Lie algebroid on $\overline{\mathcal{Q}}$.
\end{lemma}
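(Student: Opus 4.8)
The plan is to verify the three defining features of an $\mathcal{O}_X$-Lie algebroid for the pair consisting of the descended bracket \eqref{assoc lie bracket} and the anchor \eqref{assoc lie anchor}: that the bracket is antisymmetric and satisfies the Jacobi identity, that the anchor is an $\mathcal{O}_X$-linear morphism of Lie algebras, and that the Leibniz rule holds. Since the existence of the descended bracket and of the factored anchor has already been established in the preceding discussion, the whole argument amounts to pushing the Courant axioms through the quotient map $\mathcal{Q} \to \overline{\mathcal{Q}} = \coker(\pi^\dagger)$.

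First I would establish antisymmetry. The symmetrizer axiom \eqref{symmetrizer} asserts that $\{q_1, q_2\} + \{q_2, q_1\} = \pi^\dagger(d\langle q_1, q_2\rangle)$ lies in $\im(\pi^\dagger)$ for all $q_1, q_2 \in \mathcal{Q}$. Hence the image of this sum in $\overline{\mathcal{Q}}$ vanishes, so the descended bracket \eqref{assoc lie bracket} is antisymmetric. This is the crux of the matter: the Leibniz bracket on $\mathcal{Q}$ itself is \emph{not} antisymmetric, and it is precisely the passage to the cokernel of $\pi^\dagger$ that annihilates its symmetric part.

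Next I would deduce the Jacobi identity. The Leibniz (Loday) identity $\{q_1, \{q_2, q_3\}\} = \{\{q_1, q_2\}, q_3\} + \{q_2, \{q_1, q_3\}\}$, built into the definition of the Leibniz $\mathbb{C}$-algebra structure on $\mathcal{Q}$, descends to $\overline{\mathcal{Q}}$ because the bracket does. A Leibniz algebra whose bracket is antisymmetric is automatically a Lie algebra, the Leibniz identity being then exactly the Jacobi identity; combined with the antisymmetry just proved, this makes $\overline{\mathcal{Q}}$ a $\mathbb{C}$-Lie algebra. The anchor \eqref{assoc lie anchor} is $\mathcal{O}_X$-linear, since $\pi$ is $\mathcal{O}_X$-linear on $\mathcal{Q}$ and $\im(\pi^\dagger)$ is an $\mathcal{O}_X$-submodule, and it is a morphism of Lie algebras because $\pi$ is a morphism of Leibniz algebras; both properties are inherited by the quotient. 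Finally, the Leibniz rule for the algebroid is the direct image of \eqref{leibniz} under the quotient map.

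The only genuinely substantive step is the first one; the remainder is routine descent, so I do not expect a real obstacle. The single nontrivial observation is that \eqref{symmetrizer} exhibits the failure of antisymmetry as landing in $\im(\pi^\dagger)$, after which antisymmetry on $\overline{\mathcal{Q}}$, together with the conversion of the Leibniz identity into the Jacobi identity, is immediate.
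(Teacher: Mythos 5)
Your proposal is correct and follows exactly the paper's argument: skew-symmetry of the descended bracket comes from the symmetrizer axiom \eqref{symmetrizer} forcing the symmetric part of the Leibniz bracket into $\im(\pi^\dagger)$, and the Jacobi identity and Leibniz rule then descend from those of $\mathcal{Q}$. You merely spell out a few routine verifications (the $\mathcal{O}_X$-linearity of the anchor, the Leibniz-to-Lie conversion) that the paper leaves implicit.
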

\begin{proof}
According to \eqref{symmetrizer} the symmetrization of the Leibniz
bracket on $\mathcal{Q}$ takes values in the image of
$\pi^\dagger$. Therefore the induced bracket \eqref{assoc lie
bracket} is skew-symmetric. The Leibniz rule and the Jacobi
identity for $\overline{\mathcal{Q}}$ follow from those for
$\mathcal{Q}$.
\end{proof}

In what follows we refer to the Lie algebroid $\overline{\mathcal{Q}}$ as \emph{the Lie algebroid associated to the Courant algebroid $\mathcal{Q}$}.

The assignment $\mathcal{Q} \mapsto \overline{\mathcal{Q}}$ extends to a functor
\[
\overline{(\ )} \colon \CA(X)_k \longrightarrow \LA{\mathcal{O}_X} .
\]

\subsection{Courant extensions}
\begin{definition}\label{def: courant extension}
For $\mathcal{A} \in \LA{\mathcal{O}_X}$ a \emph{($k$-dimensional)
Courant extension of $\mathcal{A}$} is a $k$-dimensional Courant
algebroid $\mathcal{Q}$ together with the identification
$\overline{\mathcal{Q}} \cong \mathcal{A}$ such that the sequence
\begin{equation}\label{CExt transitive exact sequence}
0 \to \Omega^k_X \xrightarrow{\pi^\dagger} \mathcal{Q} \to \mathcal{A} \to 0
\end{equation}
is exact.
\end{definition}

A morphism $\phi\colon \mathcal{Q}_1 \to \mathcal{Q}_2$ of Courant
extensions of $\mathcal{A}$ is a morphism of Courant algebroids
which is compatible with the identifications
$\overline{\mathcal{Q}_i} \cong \mathcal{A}$.

\begin{notation}
We denote the category of $k$-dimensional Courant extension of $\mathcal{A}$ by $\CExt{\mathcal{A}}{k}$.
\end{notation}

A morphism in $\CExt{\mathcal{A}}{k}$ induces a morphism of
associated short exact sequences \eqref{CExt transitive exact
sequence}, hence is an isomorphism of  underlying
$\mathcal{O}_X$-modules. It is easy to see that the inverse map
is, in fact, a morphism of Courant algebroids. Consequently,
$\CExt{\mathcal{A}}{k}$ is a groupoid.

\subsection{Transitive Courant algebroids}
A Courant algebroid $\mathcal{Q}$ is \emph{transitive} if the
associated Lie algebroid $\overline{\mathcal{Q}}$ is transitive,
i.e. if the anchor map is surjective.

\begin{lemma}\label{lemma: transitive injective coanchor}
Suppose that $\mathcal{Q}$ is a transitive Courant algebroid.
Then, the map $\Omega^k_X \xrightarrow{\pi^\dagger} \mathcal{Q}$
is injective, hence the sequence
\[
0 \to \Omega^k_X \xrightarrow{\pi^\dagger} \mathcal{Q} \to \overline{\mathcal{Q}} \to 0
\]
is exact.
\end{lemma}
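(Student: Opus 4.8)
The plan is to read off injectivity of $\pi^\dagger$ directly from the adjunction axiom \eqref{adjunction} together with transitivity, thereby reducing the claim to the elementary non-degeneracy of interior multiplication on forms. Note first that exactness of the displayed sequence at $\mathcal{Q}$ and at $\overline{\mathcal{Q}}$ is automatic: by definition $\overline{\mathcal{Q}} = \coker(\pi^\dagger)$ and the map $\mathcal{Q} \to \overline{\mathcal{Q}}$ is the corresponding quotient projection, so $\ker(\mathcal{Q}\to\overline{\mathcal{Q}}) = \im(\pi^\dagger)$ holds by construction. Hence the only content to establish is exactness at $\Omega^k_X$, i.e. injectivity of $\pi^\dagger$.

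First I would take a local section $\alpha$ of $\Omega^k_X$ with $\pi^\dagger(\alpha) = 0$ and apply \eqref{adjunction}: for every local section $q$ of $\mathcal{Q}$ one has
\[
\iota_{\pi(q)}\alpha = \langle q, \pi^\dagger(\alpha)\rangle = \langle q, 0\rangle = 0 .
\]
Next I would invoke transitivity. Since $\mathcal{Q}$ is transitive the anchor $\pi \colon \mathcal{Q} \to \mathcal{T}_X$ is surjective as a map of sheaves, so every local vector field $\xi$ lifts — after shrinking the neighborhood if necessary — to a section $q$ with $\pi(q) = \xi$. As $\alpha$ is fixed and the vanishing $\iota_\xi\alpha = 0$ is a local condition, this yields $\iota_\xi\alpha = 0$ for all $\xi \in \mathcal{T}_X$.

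The final step is the non-degeneracy of the contraction pairing $\mathcal{T}_X \otimes_{\mathcal{O}_X} \Omega^k_X \to \Omega^{k-1}_X$: for $k \geqslant 1$, if $\iota_\xi\alpha = 0$ for all $\xi$ then $\alpha = 0$, as one checks in local coordinates by contracting $\alpha = \sum_{|I|=k} a_I\, dx^I$ against the coordinate vector fields. This gives $\alpha = 0$ and hence the injectivity of $\pi^\dagger$. I expect the only genuinely delicate points to be the sheaf-theoretic use of surjectivity — one must lift vector fields \emph{locally} rather than globally, which is exactly what surjectivity of sheaves provides — and the degenerate case $k = 0$, where $\iota_\xi$ annihilates all of $\Omega^0_X = \mathcal{O}_X$ and the contraction argument collapses; this case corresponds to the marked Lie algebroids of the previous section (where injectivity can genuinely fail, e.g. for a zero marking) and should be understood as excluded here, the statement being of interest for $k \geqslant 1$.
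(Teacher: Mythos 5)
Your proof is correct and takes essentially the same route as the paper's: both deduce $\iota_{\pi(q)}\alpha = 0$ for all $q$ from \eqref{adjunction}, use surjectivity of the anchor to pass to $\iota_\xi\alpha = 0$ for all $\xi\in\mathcal{T}_X$, and conclude $\alpha = 0$ by non-degeneracy of interior multiplication. Your caveat about $k=0$ (where the pairing lands in $\Omega^{-1}_X = 0$ and the argument collapses) is a legitimate observation that the paper's proof silently glosses over.
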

\begin{proof}
According to \eqref{adjunction} the map $\pi^\dagger$ is adjoint
to the map $\pi$. As the latter is surjective, the former is
injective. Namely, if $\alpha\in\Omega^k_X$ satisfies
$\pi^\dagger(\alpha) = 0$, then $\iota_{\pi(q)}\alpha = 0$ for all
$q\in\mathcal{Q}$. Since $\pi$ is surjective $\iota_{\xi}\alpha =
0$ for all $\xi\in\mathcal{T}_X$, i.e. $\alpha = 0$.
\end{proof}

\subsection{Exact Courant algebroids}
\begin{definition}
A Courant algebroid $\mathcal{Q}$ is \emph{exact} if the anchor map $\overline{\mathcal{Q}}\xrightarrow{\pi} \mathcal{T}_X$ is an isomorphism.
\end{definition}

Equivalently, by Lemma \ref{lemma: transitive injective coanchor}
an exact Courant algebroid $\mathcal{Q}$ is a Courant extension of
$\mathcal{T}_X$, i.e. the sequence
\begin{equation}\label{ECA exact sequence}
0 \to \Omega^k_X \xrightarrow{\pi^\dagger} \mathcal{Q}\xrightarrow{\pi} \mathcal{T}_X \to 0
\end{equation}
is exact.

We denote by $\ECA(X)_k$ the category (groupoid) of exact Courant algebroids.

\begin{example}\label{example: trivial ECA}
Let $\mathcal{Q}_0 = \Omega^k_X\oplus\mathcal{T}_X$. Let $\pi
\colon \mathcal{Q}_0 \to \mathcal{T}_X$ (respectively,
$\pi^\dagger \colon \Omega^k_X \to \mathcal{Q}_0$) be the
projection on the second factor (respectively, the inclusion of
the first summand). Let $\langle\ ,\ \rangle\colon
\mathcal{Q}_0\otimes_{\mathcal{O}_X}\mathcal{Q}_0 \to
\Omega^{k-1}$ be the symmetric pairing with
$\langle\Omega^k_X,\Omega^k_X\rangle = 0$,
$\langle\mathcal{T}_X,\mathcal{T}_X\rangle = 0$ and $\langle\ ,\
\rangle\colon \mathcal{T}_X\otimes_{\mathcal{O}_X}\Omega^k_X \to
\Omega^{k-1}$ the interior multiplication. There is a unique
structure of a Courant algebroid on $\mathcal{Q}_0$ characterized
by the property that the canonical inclusion of $\mathcal{T}_X$
into $\mathcal{Q}_0$ is a map of Leibniz algebras.
\end{example}

Suppose that $\mathcal{Q}$ is an exact Courant algebroid. A
splitting $\nabla \colon \mathcal{T}_X \to \mathcal{Q}$ of
\eqref{ECA exact sequence} (i.e. a section of the anchor map) is
called a \emph{connection (on $\mathcal{Q}$)} if is isotropic with
respect to the symmetric pairing, i.e. $\langle
\nabla(\xi),\nabla(\eta)\rangle = 0$ for all $\xi, \eta \in
\mathcal{T}_X$. We denote by $\Conn(\mathcal{Q})$ the sheaf of
locally defined connections on $Q$.

\begin{lemma}\label{lemma: conn locally non-empty}
The sheaf $\Conn(\mathcal{Q})$ is locally non-empty.
\end{lemma}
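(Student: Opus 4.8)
The plan is to construct, locally on $X$, an isotropic $\mathcal{O}_X$-linear splitting of the anchor map in the exact sequence \eqref{ECA exact sequence}, i.e. a connection. Since $\mathcal{T}_X$ is locally free, the epimorphism $\pi\colon\mathcal{Q}\to\mathcal{T}_X$ admits locally an $\mathcal{O}_X$-linear section $s\colon\mathcal{T}_X\to\mathcal{Q}$ which need not be isotropic. I would record the failure of isotropy by the symmetric $\mathcal{O}_X$-bilinear form
\[
g(\xi,\eta):=\langle s(\xi),s(\eta)\rangle\in\Omega^{k-1}_X ,\qquad \xi,\eta\in\mathcal{T}_X .
\]
Substituting $q=s(\xi)$, $q_1=s(\eta)$, $q_2=s(\zeta)$ into axiom \eqref{ip jacobi} and using $\pi\circ s=\id$ yields the one structural constraint I will need, the cyclic identity
\[
\iota_\xi g(\eta,\zeta)+\iota_\eta g(\zeta,\xi)+\iota_\zeta g(\xi,\eta)=0 .
\]

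Next I would search for a corrected splitting of the form $\nabla=s+\pi^\dagger\circ A$ with $A\in\shHom_{\mathcal{O}_X}(\mathcal{T}_X,\Omega^k_X)$. By \eqref{complex} one has $\pi\circ\pi^\dagger=0$, so $\nabla$ is again a section of $\pi$, and the only issue is to choose $A$ making $\nabla$ isotropic. Expanding $\langle\nabla(\xi),\nabla(\eta)\rangle$, evaluating the two mixed terms by \eqref{adjunction} (together with symmetry of the pairing), and observing that the pure $A$-term vanishes again by \eqref{complex}, I obtain
\[
\langle\nabla(\xi),\nabla(\eta)\rangle=g(\xi,\eta)+\iota_\xi A(\eta)+\iota_\eta A(\xi) .
\]
Thus the whole statement reduces to solving for $A$ the linear equation $\iota_\xi A(\eta)+\iota_\eta A(\xi)=-g(\xi,\eta)$.

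The main obstacle, and the only nonformal step, is this last equation: the naive guess $A\sim g$ fails because $g(-,\eta)$ is not antisymmetric, and it is exactly here that axiom \eqref{ip jacobi} enters. Working over $\mathbb{C}$, so that $k+1$ is invertible, I would write down the explicit solution
\[
A(\eta):=-\tfrac{1}{k+1}\sum_a e^a\wedge g(e_a,\eta) ,
\]
where $\{e_a\}$ is a local frame of $\mathcal{T}_X$ with dual coframe $\{e^a\}\subset\Omega^1_X$; equivalently $A(\eta)$ is the image of $g(-,\eta)\in\Omega^1_X\otimes_{\mathcal{O}_X}\Omega^{k-1}_X$ under the wedge map, so it is frame-independent. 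Contracting and symmetrizing, the Leibniz rule for $\iota_\xi$ shows that $\iota_\xi A(\eta)+\iota_\eta A(\xi)$ equals $-\tfrac{1}{k+1}$ times
\[
2g(\xi,\eta)-\sum_a e^a\wedge\bigl(\iota_\xi g(e_a,\eta)+\iota_\eta g(e_a,\xi)\bigr) ;
\]
the cyclic identity with $\zeta=e_a$ rewrites the bracket as $-\iota_{e_a}g(\xi,\eta)$, and the Euler identity $\sum_a e^a\wedge\iota_{e_a}\beta=(\deg\beta)\,\beta$ applied to $\beta=g(\xi,\eta)$ collapses the sum to $(k-1)g(\xi,\eta)$. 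Hence the displayed expression is $(k+1)g(\xi,\eta)$ and $\iota_\xi A(\eta)+\iota_\eta A(\xi)=-g(\xi,\eta)$, so $\nabla$ is an isotropic splitting. Finally I would note that for $k=1$ the correction term has degree $k-1=0$ and drops out, recovering $A=-\tfrac12 g$ with no appeal to \eqref{ip jacobi}, consistent with the redundancy of that axiom in the one-dimensional case.
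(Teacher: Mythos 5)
Your argument is correct, and it follows the paper's overall strategy --- start from an arbitrary local $\mathcal{O}_X$-linear splitting $s$ of \eqref{ECA exact sequence} and correct it to $\nabla = s+\pi^\dagger\circ A$ --- but the construction of the correcting term is genuinely different from, and more careful than, the one in the paper. The paper takes $\phi(\xi)\in\Omega^k_X$ to be ``uniquely determined by'' $\iota_\eta\phi(\xi)=-\tfrac12\langle s(\xi),s(\eta)\rangle$; for $k\geqslant 2$ such a $k$-form exists only if the right-hand side is alternating under exchange of $\eta$ with the arguments of the $(k-1)$-form, and by \eqref{ip jacobi} the relevant symmetrization equals $-\tfrac12\iota_\xi\langle s(\eta),s(\eta_1)\rangle$ rather than zero, so the paper's $\phi$ need not exist (only the case $k=1$ is unproblematic). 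Your averaged formula $A(\eta)=-\tfrac1{k+1}\sum_a e^a\wedge g(e_a,\eta)$ sidesteps this: it is manifestly a well-defined, frame-independent $k$-form (the image of $g(-,\eta)$ under $\Omega^1_X\otimes_{\mathcal{O}_X}\Omega^{k-1}_X\xrightarrow{\wedge}\Omega^k_X$), and it is required to solve only the symmetrized equation $\iota_\xi A(\eta)+\iota_\eta A(\xi)=-g(\xi,\eta)$, which is all that isotropy of $\nabla$ demands. I have checked the reduction of the mixed and pure correction terms via \eqref{adjunction} and \eqref{complex}, the cyclic identity extracted from \eqref{ip jacobi}, and the contraction computation using the Euler identity $\sum_a e^a\wedge\iota_{e_a}\beta=(\deg\beta)\beta$; all are correct. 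Your closing observation that for $k=1$ the formula collapses to $A=-\tfrac12 g$ with no appeal to \eqref{ip jacobi} matches the paper's remark that this axiom is redundant in the one-dimensional case. In short, your proof is a completed and repaired version of the paper's argument, and the explicit solution of the symmetrized equation is exactly the point where \eqref{ip jacobi} is genuinely needed.
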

\begin{proof}
Let $s \colon \mathcal{T}_X \to \mathcal{Q}$ denote a locally
defined splitting of \eqref{ECA exact sequence}. Let $\phi \colon
\mathcal{T}_X \to \Omega^k_X$ denote the map uniquely determined
by
\[
\iota_\eta\phi(\xi) = -\frac12 \langle s(\xi),s(\eta)\rangle \ .
\]
Let $\nabla = s + \pi^\dagger\circ\phi$. Then,
\begin{multline*}
\langle \nabla(\xi),\nabla(\eta)\rangle = \langle s(\xi) + \pi^\dagger(\phi(\xi)), s(\eta) + \pi^\dagger(\phi(\eta)) \rangle = \\
\langle s(\xi), s(\eta) \rangle + \iota_\eta\phi(\xi) + \iota_\xi\phi(\eta) = \\
\langle s(\xi), s(\eta) \rangle -\frac12 \langle s(\xi),s(\eta)\rangle -\frac12 \langle s(\eta),s(\xi)\rangle = 0 \ .
\end{multline*}
\end{proof}

For a connection $\nabla\in\Conn(\mathcal{Q})$ and
$\omega\in\Omega^{k+1}_X$ let $\nabla + \omega \colon
\mathcal{T}_X \to \mathcal{Q}$ denote the map defined by
\[
(\nabla + \omega)(\xi) = \nabla(\xi) + \pi^\dagger(\iota_\xi\omega) \ .
\]
\begin{lemma}
{~}
\begin{enumerate}
\item For $\nabla\in\Conn(\mathcal{Q})$ and $\omega\in\Omega^{k+1}_X$ the map $\nabla + \omega$ is a connection on $\mathcal{Q}$.

\item The assignment $\nabla \mapsto \nabla + \omega$ is an action
of (the sheaf of groups) $\Omega^{k+1}_X$ on $\Conn(\mathcal{Q})$
which endows the latter with a structure of a
$\Omega^{k+1}_X$-torsor.
\end{enumerate}
\end{lemma}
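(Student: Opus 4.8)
The plan is to verify both assertions directly from the Courant axioms. For part (1) I would first confirm that $\nabla+\omega$ is again a splitting of the anchor: applying $\pi$ to $(\nabla+\omega)(\xi) = \nabla(\xi) + \pi^\dagger(\iota_\xi\omega)$ and invoking $\pi\circ\pi^\dagger = 0$ from \eqref{complex} leaves $\pi(\nabla(\xi)) = \xi$, while $\mathcal{O}_X$-linearity is immediate from that of $\nabla$, of $\pi^\dagger$, and of $\xi\mapsto\iota_\xi\omega$. The substance is isotropy. Expanding $\langle(\nabla+\omega)(\xi),(\nabla+\omega)(\eta)\rangle$ by bilinearity produces four terms: the term $\langle\nabla(\xi),\nabla(\eta)\rangle$ vanishes because $\nabla$ is a connection; the two mixed terms evaluate, via the adjunction \eqref{adjunction} and the symmetry of the pairing, to $\iota_\xi\iota_\eta\omega$ and $\iota_\eta\iota_\xi\omega$ respectively; and the term $\langle\pi^\dagger(\iota_\xi\omega),\pi^\dagger(\iota_\eta\omega)\rangle$ vanishes because \eqref{adjunction} rewrites it with $\iota$ contracted against $\pi\pi^\dagger = 0$. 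Since interior multiplications anticommute, $\iota_\xi\iota_\eta\omega + \iota_\eta\iota_\xi\omega = 0$, giving isotropy.

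For part (2) the group-action identities $(\nabla+\omega_1)+\omega_2 = \nabla+(\omega_1+\omega_2)$ and $\nabla+0 = \nabla$ follow at once from the additivity of $\xi\mapsto\pi^\dagger(\iota_\xi\omega)$ in $\omega$, so it remains to show the action is free and transitive; local non-emptiness from Lemma \ref{lemma: conn locally non-empty} then upgrades $\Conn(\mathcal{Q})$ to a torsor. Freeness is direct: if $\nabla+\omega = \nabla$ then $\pi^\dagger(\iota_\xi\omega) = 0$ for all $\xi$, and since $\mathcal{Q}$ is transitive Lemma \ref{lemma: transitive injective coanchor} makes $\pi^\dagger$ injective, so $\iota_\xi\omega = 0$ for all $\xi\in\mathcal{T}_X$, whence $\omega = 0$.

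The crux, which I expect to be the main obstacle, is transitivity. Given two connections $\nabla_1,\nabla_2$, their difference is a section of $\pi$ landing in $\ker\pi = \im\pi^\dagger$, so injectivity of $\pi^\dagger$ yields a unique $\mathcal{O}_X$-linear map $\psi\colon\mathcal{T}_X\to\Omega^k_X$ with $\nabla_1(\xi) = \nabla_2(\xi) + \pi^\dagger(\psi(\xi))$. The task is to produce $\omega\in\Omega^{k+1}_X$ with $\psi(\xi) = \iota_\xi\omega$, and this is exactly where the isotropy hypotheses enter. Expanding $0 = \langle\nabla_1(\xi),\nabla_1(\eta)\rangle$ as in part (1), and using isotropy of $\nabla_2$, yields the constraint $\iota_\xi\psi(\eta) + \iota_\eta\psi(\xi) = 0$. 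I would then set $\omega(\xi_0,\ldots,\xi_k) := \psi(\xi_0)(\xi_1,\ldots,\xi_k)$; antisymmetry in the last $k$ arguments is automatic since $\psi(\xi_0)$ is a $k$-form, and the displayed constraint supplies antisymmetry under the transposition of $\xi_0$ and $\xi_1$. Since these transpositions together generate the full symmetric group, $\omega$ is a well-defined $(k+1)$-form, and by construction $\iota_\xi\omega = \psi(\xi)$, so $\nabla_1 = \nabla_2+\omega$. The only mildly delicate point is the bookkeeping that total antisymmetry follows from antisymmetry in the last $k$ slots together with the single transposition $(\xi_0\,\xi_1)$.
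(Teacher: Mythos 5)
Your proposal is correct and follows essentially the same route as the paper: part (1) by expanding the pairing into four terms and using \eqref{adjunction}, $\pi\circ\pi^\dagger=0$ and the anticommutativity of interior products, and part (2) by writing the difference of two connections as $\pi^\dagger\circ\psi$ via exactness and deducing from isotropy that the corresponding section of $\Omega^1_X\otimes_{\mathcal{O}_X}\Omega^k_X$ is totally skew-symmetric. Your treatment is somewhat more explicit than the paper's (which compresses the skew-symmetry step into ``the calculation above'' and leaves freeness implicit), but the underlying argument is identical.
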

\begin{proof}
For $\xi, \eta \in \mathcal{T}_X$
\begin{multline*}
\langle (\nabla + \omega)(\xi),(\nabla + \omega)(\eta)\rangle = \langle \nabla(\xi) + \pi^\dagger(\iota_\xi\omega), \nabla(\eta) + \pi^\dagger(\iota_\eta\omega) \rangle = \\ \langle \nabla(\xi), \nabla(\eta)\rangle + \langle \pi^\dagger(\iota_\xi\omega), \nabla(\eta) \rangle + \langle \nabla(\xi), \pi^\dagger(\iota_\eta\omega) \rangle + \langle \pi^\dagger(\iota_\xi\omega)), \pi^\dagger(\iota_\eta\omega) \rangle
\end{multline*}
with $\langle \nabla(\xi), \nabla(\eta)\rangle = 0$ because
$\nabla$ is a connection, $\langle \pi^\dagger(\iota_\xi\omega)),
\pi^\dagger(\iota_\eta\omega) \rangle = 0$ by \eqref{adjunction}
and $\langle \nabla(\xi), \pi^\dagger(\iota_\eta\omega) \rangle =
\iota_\xi\iota_\eta\omega = -\iota_\eta\iota_\xi\omega = -\langle
\pi^\dagger(\iota_\xi\omega), \nabla(\eta) \rangle$ by
skew-symmetry of $\omega$ and \eqref{adjunction}. Hence, $\langle
(\nabla + \omega)(\xi),(\nabla + \omega)(\eta)\rangle = 0$ which
proves the first claim.

It is clear that the assignment $(\omega, \nabla) \mapsto \nabla +
\omega$ defines an action of $\Omega^{k+1}_X$ on
$\Conn(\mathcal{Q})$. For $\nabla_1, \nabla_2 \in
\Conn(\mathcal{Q})$ the difference $\nabla_1 - \nabla_2$ satisfies
$\pi\circ(\nabla_1 - \nabla_2) = 0$, hence defines a map $\phi
\colon \mathcal{T}_X \to \Omega^k_X$ (by $\pi^\dagger\circ\phi =
\nabla_1 - \nabla_2$) or, equivalently, a section of
$\Omega^1_X\otimes_{\mathcal{O}_X} \Omega^k_X$. The calculation
above shows that the latter is totally skew-symmetric, i.e. a
section of $\Omega^{k+1}_X$. By Lemma \ref{lemma: conn locally
non-empty}, $\Conn(\mathcal{Q})$ is locally non-empty, hence a
torsor.
\end{proof}

For a connection $\nabla$ on $\mathcal{Q}$ and $\xi, \eta \in
\mathcal{T}_X$ the expression $\{\nabla(\xi),\nabla(\eta)\} -
\nabla([\xi,\eta])$ belong to the kernel of the anchor map. Hence
there is a unique $c(\nabla)(\xi, \eta)\in \Omega^k_X$ with
\[
\pi^\dagger(c(\nabla)(\xi, \eta)) = \{\nabla(\xi),\nabla(\eta)\} - \nabla([\xi,\eta]) \ .
\]
The assignment $(\xi, \eta) \mapsto c(\nabla)(\xi, \eta)$ defines a map
\[
c(\nabla) \colon \mathcal{T}_X\otimes_\mathbb{C}\mathcal{T}_X \to \Omega^k_X
\]
called the \emph{curvature of $\nabla$}.

\begin{lemma}\label{lemma: curvature form}
{~}
\begin{enumerate}
\item The curvature $c(\nabla)$ of a connection $\nabla$ satisfies
\[
\iota_\eta (c(\nabla)(\xi_1, \xi_2)) = \langle \{\nabla(\xi_1),\nabla(\xi_2)\}, \nabla(\eta)\rangle
\]
for $\xi_1, \xi_2, \eta \in \mathcal{T}_X$
\item
The map
\begin{equation}\label{curvature form}
(\xi_1,\xi_2,\eta_1,\ldots,\eta_k) \mapsto \iota_{\eta_1\wedge\cdots\wedge\eta_k}(c(\nabla)(\xi_1, \xi_2))
\end{equation}
is totally skew-symmetric and $\mathcal{O}_X$-multilinear.

\item The differential form defined by \eqref{curvature form} is closed.
\end{enumerate}
\end{lemma}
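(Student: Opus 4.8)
The plan is to prove the three parts in order, using the defining property of the curvature together with the Courant algebroid axioms, especially the pairing-invariance \eqref{ip invariance}, the ``Jacobi'' identity \eqref{ip jacobi}, and the adjunction \eqref{adjunction}. For part (1), I would start from the definition $\pi^\dagger(c(\nabla)(\xi_1,\xi_2)) = \{\nabla(\xi_1),\nabla(\xi_2)\} - \nabla([\xi_1,\xi_2])$ and pair both sides with $\nabla(\eta)$. By \eqref{adjunction}, pairing the left-hand side gives $\langle \pi^\dagger(c(\nabla)(\xi_1,\xi_2)), \nabla(\eta)\rangle = \iota_{\pi(\nabla(\eta))}(c(\nabla)(\xi_1,\xi_2)) = \iota_\eta(c(\nabla)(\xi_1,\xi_2))$, since $\pi\circ\nabla = \id$. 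On the right, the term $\langle \nabla([\xi_1,\xi_2]),\nabla(\eta)\rangle$ vanishes because $\nabla$ is a connection (isotropic), leaving exactly $\langle\{\nabla(\xi_1),\nabla(\xi_2)\},\nabla(\eta)\rangle$. This is essentially a one-line computation.

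For part (2), $\mathcal{O}_X$-multilinearity in the $\eta_i$ is immediate from part (1) since $\iota$ is $\mathcal{O}_X$-linear in the vector-field slot. The substantive content is the total skew-symmetry, which I would organize as follows. Skew-symmetry in $\xi_1,\xi_2$ follows from \eqref{symmetrizer}: the symmetrization $\{\nabla(\xi_1),\nabla(\xi_2)\}+\{\nabla(\xi_2),\nabla(\xi_1)\} = \pi^\dagger(d\langle\nabla(\xi_1),\nabla(\xi_2)\rangle)$ vanishes because $\nabla$ is isotropic, so $c(\nabla)$ is already antisymmetric in $(\xi_1,\xi_2)$. Skew-symmetry of the $\eta_i$ among themselves is automatic from the wedge $\eta_1\wedge\cdots\wedge\eta_k$. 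The crucial cross-relations — antisymmetry exchanging a $\xi$ with an $\eta$ — come from combining part (1) with \eqref{ip invariance} and \eqref{ip jacobi}. Using the formula from part (1) I would rewrite $\iota_{\eta}(c(\nabla)(\xi_1,\xi_2)) = \langle\{\nabla(\xi_1),\nabla(\xi_2)\},\nabla(\eta)\rangle$ and apply \eqref{ip invariance} with $q = \nabla(\xi_1)$ to trade the bracket inside the pairing for $L_{\xi_1}\langle\nabla(\xi_2),\nabla(\eta)\rangle - \langle\nabla(\xi_2),\{\nabla(\xi_1),\nabla(\eta)\}\rangle$; the first term vanishes by isotropy, and iterating this together with \eqref{ip jacobi} produces the desired sign relations under transposition of adjacent arguments.

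For part (3), closedness of the form \eqref{curvature form}, the plan is to express $d$ of this $(k+2)$-form via the Cartan formula for the de Rham differential in terms of Lie derivatives $L_{\xi_i}$, interior products, and the Lie brackets $[\xi_i,\xi_j]$ of the arguments, then substitute the identity from part (1) and repeatedly apply the pairing invariance \eqref{ip invariance} together with the Leibniz and Jacobi identities for the Courant bracket. The terms reorganize so that the bracket contributions cancel against the $[\xi_i,\xi_j]$ terms of the Cartan formula, precisely because of \eqref{ip invariance} and the Jacobi identity \eqref{ip jacobi}; isotropy of $\nabla$ kills the remaining diagonal contributions. The main obstacle is this last step: it is a lengthy bookkeeping of signs and of the interplay between $L_{\pi(q)}$, interior multiplication, and the Leibniz bracket, and I expect it is exactly the kind of computation the authors defer to Section \ref{section: proofs}. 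Everything reduces, in the end, to the defining relation and the axioms \eqref{ip invariance}, \eqref{ip jacobi}, \eqref{adjunction}, \eqref{symmetrizer}; the difficulty is combinatorial rather than conceptual.
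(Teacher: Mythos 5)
Your proposal is correct and follows essentially the same route as the paper: part (1) via \eqref{adjunction} and isotropy, part (2) via the symmetrizer axiom for skew-symmetry in $(\xi_1,\xi_2)$ together with the key computation $\iota_\eta(c(\nabla)(\xi_1,\xi_2)) = L_{\xi_1}\langle\nabla(\xi_2),\nabla(\eta)\rangle - \langle\{\nabla(\xi_1),\nabla(\eta)\},\nabla(\xi_2)\rangle = -\iota_{\xi_2}(c(\nabla)(\xi_1,\eta))$ for the cross-transposition, and part (3) by reducing closedness to the Jacobi identity for the Leibniz bracket. The only cosmetic difference is that in part (3) the paper expands the Jacobi identity applied to $\nabla\xi,\nabla\eta,\nabla\gamma$ directly (using \eqref{forms left ideal}) and extracts $\iota_\gamma\iota_\xi\iota_\eta\, dc(\nabla)=0$ after cancellation, rather than starting from the Cartan formula for $d$; the two computations involve the same terms.
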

\begin{proof}
The calculation
\begin{multline*}
\iota_\eta (c(\nabla)(\xi_1, \xi_2)) = \\
\langle c(\nabla)(\xi_1, \xi_2), \nabla(\eta)\rangle = \langle \{\nabla(\xi_1),\nabla(\xi_2)\}, \nabla(\eta)\rangle =
\\ L_{\xi_1} \langle \nabla(\xi_2), \nabla(\eta)\rangle - \langle \{\nabla(\xi_1),\nabla(\eta)\}, \nabla(\xi_2)\rangle = \\
- \langle \{\nabla(\xi_1),\nabla(\eta)\}, \nabla(\xi_2)\rangle =
- \iota_{\xi_2} (c(\nabla)(\xi_1, \eta))
\end{multline*}
proves the first claim. Since the map \eqref{curvature form} is
skew symmetric in the first two variables as well as in the rest
of the variables separately the calculation implies that
\eqref{curvature form} is, in fact, totally skew-symmetric. Since
it is $\mathcal{O}_X$-linear in $\eta_i$ and totally
skew-symmetric it follows that it is $\mathcal{O}_X$-linear in all
variables.

For any $\xi,\eta,\gamma\in\mathcal{T}_X$ we compute the left hand
side and the right hand side of the Jacobi identity,
\[
\{\nabla\xi,\{\nabla\eta,\nabla\gamma\}\}=\{\{\nabla\xi,\nabla\eta\},\nabla\gamma\}+\{\nabla\eta,\{\nabla\xi,\nabla\gamma\}\}.
\]
The left hand side is equal to:
\begin{multline}\label{curvature closed 1}
\{\nabla\xi,\{\nabla\eta,\nabla\gamma\}\}=\{\nabla\xi,
\pi^{\dagger}(c(\nabla)(\eta,\gamma))+\nabla[\eta,\gamma]\}\\
=\pi^{\dagger}(L_{\xi}c(\nabla)(\eta,\gamma))+\pi^{\dagger}(c(\nabla)(\xi,[\eta,\gamma]))+\nabla[\xi,[\eta,\gamma]] \\
=\pi^{\dagger}(L_{\xi}\iota_{\eta}\iota_{\gamma}c(\nabla)+\iota_{\xi}L_{\eta}\iota_{\gamma}c(\nabla)-\iota_{\xi}\iota_{\gamma}L_{\eta}c(\nabla))
+\nabla([\xi,[\eta,\gamma]]).
\end{multline}
Similarly, the right hand side is equal to:
\begin{multline}\label{curvature closed 2}
\{\{\nabla\xi,\nabla\eta\},\nabla\gamma\}+\{\nabla\eta,\{\nabla\xi,\nabla\gamma\}\}= \\
\{\pi^{\dagger}(c(\nabla)(\xi,\eta))+\nabla[\xi,\eta],\nabla\gamma\}+\{\nabla\eta,\pi^{\dagger}(c(\nabla)(\xi,\gamma))+\nabla[\xi,\eta]\} \\
=\pi^{\dagger}((L_{\xi}\iota_{\eta}\iota_{\gamma}-\iota_{\eta}L_{\xi}\iota_{\gamma}-\iota_{\gamma}d\iota_{\xi}\iota_{\eta}+L_{\eta}\iota_{\xi}\iota_{\gamma}
+\iota_{\eta}L_{\xi}\iota_{\gamma}-\iota_{\eta}\iota_{\gamma}L_{\xi})c(\nabla))\\
+\nabla([[\xi,\eta],\gamma]+[\eta,[\xi,\gamma]])
\end{multline}
Since $\eqref{curvature closed 1}$ is equal to $\eqref{curvature
closed 2}$, after cancellation of terms, the equality is written
as,
\begin{align*}
0 & =
\iota_{[\eta,\xi]}\iota_{\gamma}-\iota_{\gamma}L_{\xi}\iota_{\eta}-\iota_{\eta}\iota_{\gamma}L_{\xi}+\iota_{\gamma}\iota_{\xi}\iota_{\eta}dc(\nabla)
\\ & =
\iota_{\gamma}\iota_{[\eta,\xi]}+\iota_{[\eta,\xi]}\iota_{\gamma}+\iota_{\gamma}\iota_{\xi}\iota_{\eta}dc(\nabla)\\
& =\iota_{\gamma}\iota_{\xi}\iota_{\eta}dc(\nabla),
\end{align*}
\newline
thus $c(\nabla)$ is a closed differential form.
\end{proof}

In view of Lemma \ref{lemma: curvature form} we may and will
regard the curvature $c(\nabla)$ as a closed differential
$(k+2)$-form. Thus, the assignment $\nabla \mapsto c(\nabla)$
defines a map
\begin{equation}\label{curvature map}
c \colon \Conn(\mathcal{Q}) \to \Omega^{k+2,cl}_X \ .
\end{equation}

\begin{lemma}
The map \eqref{curvature map} satisfies $c(\nabla + \omega) =
c(\nabla) + d\omega$. Thus, the pair $(\Conn(\mathcal{Q}), c)$ is
a $(\Omega^{k+1}_X\xrightarrow{d}\Omega^{k+2,cl}_X)$-torsor.
\end{lemma}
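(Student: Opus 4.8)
The plan is to establish two things: first, the formula $c(\nabla + \omega) = c(\nabla) + d\omega$ relating the curvature map to the $\Omega^{k+1}_X$-action on connections; and second, that this formula together with the preceding lemmas upgrades $(\Conn(\mathcal{Q}), c)$ into a torsor over the two-term complex $\Omega^{k+1}_X \xrightarrow{d} \Omega^{k+2,cl}_X$. For the first part I would compute $c(\nabla+\omega)(\xi,\eta)$ directly from its defining property $\pi^\dagger(c(\nabla+\omega)(\xi,\eta)) = \{(\nabla+\omega)(\xi), (\nabla+\omega)(\eta)\} - (\nabla+\omega)([\xi,\eta])$. Expanding $(\nabla+\omega)(\xi) = \nabla(\xi) + \pi^\dagger(\iota_\xi\omega)$ and using the bilinearity of the Leibniz bracket, the right-hand side breaks into the term $\{\nabla(\xi),\nabla(\eta)\} - \nabla([\xi,\eta])$ (which is $\pi^\dagger(c(\nabla)(\xi,\eta))$ by definition), plus three cross terms and a correction coming from the anchor acting on $\iota_\eta\omega$.

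The key computational input is that brackets involving $\pi^\dagger$ simplify via the Courant axioms. Specifically, $\{\nabla(\xi), \pi^\dagger(\iota_\eta\omega)\} = \pi^\dagger(L_{\pi(\nabla(\xi))}(\iota_\eta\omega)) = \pi^\dagger(L_\xi \iota_\eta\omega)$ by axiom \eqref{forms left ideal} together with $\pi\circ\nabla = \id$. The opposite-order bracket $\{\pi^\dagger(\iota_\xi\omega), \nabla(\eta)\}$ is handled by \eqref{symmetrizer}: its symmetrization with $\{\nabla(\eta),\pi^\dagger(\iota_\xi\omega)\}$ equals $\pi^\dagger(d\langle \nabla(\eta), \pi^\dagger(\iota_\xi\omega)\rangle) = \pi^\dagger(d\,\iota_\eta\iota_\xi\omega)$ by \eqref{adjunction}, and the bracket $\{\pi^\dagger(\iota_\xi\omega), \pi^\dagger(\iota_\eta\omega)\}$ is a bracket of two forms, which lies in $\im(\pi^\dagger)$ and in fact, by \eqref{symmetrizer} and \eqref{adjunction}, contributes terms one can pull out. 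Finally the subtracted term $-(\nabla+\omega)([\xi,\eta])$ supplies $-\pi^\dagger(\iota_{[\xi,\eta]}\omega)$. Since $\pi^\dagger$ is injective (by Lemma \ref{lemma: transitive injective coanchor}, as exact Courant algebroids are transitive), I may cancel $\pi^\dagger$ throughout and assemble the remaining form-valued terms. The collection $L_\xi\iota_\eta - \iota_{[\xi,\eta]} + d\iota_\eta\iota_\xi$ applied to $\omega$ is exactly the Cartan-calculus identity yielding $\iota_\eta\iota_\xi\, d\omega$, so contracting with $\eta$ and using part (1) of Lemma \ref{lemma: curvature form} shows $c(\nabla+\omega) = c(\nabla) + d\omega$ as $(k+2)$-forms.

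The main obstacle will be bookkeeping the noncommutativity of the Leibniz bracket and keeping the Cartan identities straight: the bracket is not skew, so I must track both orderings and invoke \eqref{symmetrizer} at the right moments, and the identification of $c(\nabla+\omega)$ as a form (rather than merely as an element of $\im\pi^\dagger$) relies on the injectivity of $\pi^\dagger$ and on the contraction characterization from Lemma \ref{lemma: curvature form}(1). Once the cocycle formula is in hand, the torsor claim is essentially formal: the previous lemma already shows $\Conn(\mathcal{Q})$ is an $\Omega^{k+1}_X$-torsor, and the compatibility $c(\nabla+\omega) = c(\nabla) + d\omega$ says precisely that $c$ is an affine map over $d \colon \Omega^{k+1}_X \to \Omega^{k+2,cl}_X$; combined with local nonemptiness of $\Conn(\mathcal{Q})$ (Lemma \ref{lemma: conn locally non-empty}), this is the definition of a torsor over the two-term complex. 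I would state this final implication in a sentence or two rather than belabor it.
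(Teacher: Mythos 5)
The paper states this lemma without proof, so there is no argument of its own to compare against; your direct computation is the natural one and the overall strategy is sound. Two points need tightening before it closes. First, the cross term $\{\pi^\dagger(\iota_\xi\omega),\pi^\dagger(\iota_\eta\omega)\}$ does not merely ``contribute terms one can pull out'': it vanishes outright, since by \eqref{forms left ideal} it equals $\pi^\dagger\bigl(L_{\pi(\pi^\dagger(\iota_\xi\omega))}\iota_\eta\omega\bigr)$ and $\pi\circ\pi^\dagger=0$ by \eqref{complex}; the route through \eqref{symmetrizer} only shows its symmetrization vanishes. Second, the Cartan-calculus collection you assemble at the end, $L_\xi\iota_\eta-\iota_{[\xi,\eta]}+d\iota_\eta\iota_\xi$, is missing a term and as written does not yield $\iota_\eta\iota_\xi\,d\omega$. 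Your own treatment of the opposite-order bracket, combined with \eqref{forms left ideal} and \eqref{adjunction}, gives
\[
\{\pi^\dagger(\iota_\xi\omega),\nabla(\eta)\}
=\pi^\dagger\bigl(d\,\iota_\eta\iota_\xi\omega\bigr)-\{\nabla(\eta),\pi^\dagger(\iota_\xi\omega)\}
=\pi^\dagger\bigl(d\,\iota_\eta\iota_\xi\omega-L_\eta\iota_\xi\omega\bigr),
\]
so after cancelling the injective $\pi^\dagger$ (Lemma \ref{lemma: transitive injective coanchor}) the total correction to $c(\nabla)(\xi,\eta)$ is
\[
L_\xi\iota_\eta\omega-L_\eta\iota_\xi\omega+d\,\iota_\eta\iota_\xi\omega-\iota_{[\xi,\eta]}\omega
=\iota_\eta L_\xi\omega-L_\eta\iota_\xi\omega+d\,\iota_\eta\iota_\xi\omega
=\iota_\eta\iota_\xi\,d\omega,
\]
using $L_\xi\iota_\eta=\iota_{[\xi,\eta]}+\iota_\eta L_\xi$ and then $L_\eta=d\iota_\eta+\iota_\eta d$. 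The $-L_\eta\iota_\xi\omega$ contribution is essential; without it the identity fails. With these two corrections the cocycle formula $c(\nabla+\omega)=c(\nabla)+d\omega$ follows via Lemma \ref{lemma: curvature form}(1), and the torsor statement is, as you say, formal from Lemma \ref{lemma: conn locally non-empty} and the lemma establishing the $\Omega^{k+1}_X$-action.
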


The assignment $\mathcal{Q} \mapsto (\Conn(\mathcal{Q}), c)$ extends to a functor
\begin{equation}\label{ECA to torsors}
\ECA(X)_k \to \text{$(\Omega^{k+1}_X\xrightarrow{d}\Omega^{k+2,cl}_X)$-torsors}
\end{equation}

\begin{lemma}
The functor \eqref{ECA to torsors} is an equivalence.
\end{lemma}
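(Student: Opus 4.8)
The plan is to regard both the source $\ECA(X)_k$ and the target as the groupoids of global objects of stacks of groupoids on $X$, and to show that the functor \eqref{ECA to torsors} underlies an equivalence of these stacks, a statement that may be checked locally. In fact both stacks are \emph{gerbes banded by} $\Omega^{k+1,cl}_X$: each has a global object (respectively $\mathcal{Q}_0$ of Example \ref{example: trivial ECA} and the trivial torsor), any two local objects are locally isomorphic (established below), and, as I will verify, the automorphism sheaf of every object is canonically $\Omega^{k+1,cl}_X$. Since \eqref{ECA to torsors} is a morphism of gerbes inducing the identity on the common band, it is automatically an equivalence; concretely, it remains to (i) exhibit the local triviality that makes $\ECA(X)_k$ a gerbe and matches it with the target, and (ii) identify the two automorphism sheaves with $\Omega^{k+1,cl}_X$ compatibly with the functor. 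Equivalently, one shows directly that \eqref{ECA to torsors} is fully faithful and essentially surjective, the latter by descent from the local models.

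For the local structure I would first record the \emph{twisted standard model}: for a form $H \in \Omega^{k+2,cl}_X$ let $\mathcal{Q}_H = \Omega^k_X \oplus \mathcal{T}_X$ carry the same anchor, coanchor and pairing as $\mathcal{Q}_0$, but with bracket obtained from that of Example \ref{example: trivial ECA} by adding the term $\pi^\dagger(\iota_\eta\iota_\xi H)$ to $\{(\alpha,\xi),(\beta,\eta)\}$. A direct check shows that the seven identities of Definition \ref{defn: courant} hold precisely when $dH = 0$, and that the tautological connection $\xi \mapsto (0,\xi)$ has curvature exactly $H$. Given any exact Courant algebroid $\mathcal{Q}$, a local connection $\nabla$ (which exists by Lemma \ref{lemma: conn locally non-empty}) trivializes the extension \eqref{ECA exact sequence} and identifies $\mathcal{Q}$ with $\mathcal{Q}_{c(\nabla)}$; since $c(\nabla)$ is closed by Lemma \ref{lemma: curvature form} and, by the Poincar\'e lemma, locally exact, after replacing $\nabla$ by $\nabla + \omega$ with $d\omega = -c(\nabla)$ we obtain a flat connection and hence a local isomorphism $\mathcal{Q} \cong \mathcal{Q}_0$. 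This establishes the gerbe property of the source and shows that \eqref{ECA to torsors} is locally essentially surjective, the flat connection mapping to the distinguished section of the trivial torsor.

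Next I would compute the automorphism sheaves. On the Courant side, for $B \in \Omega^{k+1,cl}_X$ the \emph{$B$-field transform} $\Phi_B \colon q \mapsto q + \pi^\dagger(\iota_{\pi(q)}B)$ is, by a direct verification using \eqref{complex}, \eqref{adjunction} and the skew-symmetry and closedness of $B$, an automorphism of $\mathcal{Q}$ in $\ECA(X)_k$, and it acts on connections by $\nabla \mapsto \nabla + B$. Conversely, any automorphism $\phi$ fixing the extension \eqref{ECA exact sequence} satisfies $\phi - \id = \pi^\dagger \circ \psi \circ \pi$ for a unique $\psi \colon \mathcal{T}_X \to \Omega^k_X$; compatibility with the pairing forces $\psi$ to be contraction $\xi \mapsto \iota_\xi B$ with a $(k+1)$-form $B$, and compatibility with the bracket forces $dB = 0$. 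Hence $\underline{\mathrm{Aut}}(\mathcal{Q}) \cong \Omega^{k+1,cl}_X$, matching the automorphism sheaf $\Omega^{k+1,cl}_X$ of any $(\Omega^{k+1}_X \xrightarrow{d} \Omega^{k+2,cl}_X)$-torsor (the automorphisms being the closed forms preserving $c$ under $\nabla \mapsto \nabla + \omega$), and \eqref{ECA to torsors} intertwines the two by construction. Together with local essential surjectivity this gives an isomorphism of $\underline{\mathrm{Isom}}$-sheaves, i.e. full faithfulness, and essential surjectivity then follows by gluing the local models $\mathcal{Q}_{H_i}$ along a cover trivializing a given torsor.

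The main obstacle is the computational heart of the local analysis: verifying that the twisted bracket of $\mathcal{Q}_H$ satisfies every identity of Definition \ref{defn: courant}, with the Jacobi/Leibniz constraints collapsing exactly to $dH = 0$, and that every extension-preserving automorphism is a closed $B$-field. Once these two local statements are in hand, the passage to a global equivalence is formal: it is the standard comparison of two gerbes banded by $\Omega^{k+1,cl}_X$, the cocycle and descent bookkeeping being automatic because the functor is the identity on bands.
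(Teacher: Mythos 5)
Your argument is correct in outline, but it takes a genuinely different route from the paper. The paper's proof is constructive: it exhibits an explicit quasi-inverse by sending a $(\Omega^{k+1}_X\xrightarrow{d}\Omega^{k+2,cl}_X)$-torsor $(\mathcal{C},c)$ to the twist $\mathcal{Q}_0^{\mathcal{C}} = \mathcal{C}\times_{\Omega^{k+1}_X}\mathcal{Q}_0$ of the standard algebroid of Example \ref{example: trivial ECA}, equipped with a single global bracket formula $[(s_1,q_1),(s_2,q_2)] = (s_1,[q_1,q_2+\iota_{\pi(q_2)}(s_1-s_2)]_0 + \iota_{\pi(q_1)\wedge\pi(q_2)}c(s_1))$; full faithfulness and essential surjectivity then come for free from having a quasi-inverse, with no descent or gluing. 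You instead compare the two sides as gerbes banded by $\Omega^{k+1,cl}_X$: local triviality of exact Courant algebroids via flat connections (Poincar\'e lemma applied to $c(\nabla)$), identification of automorphism sheaves via $B$-field transforms, and the formal fact that a band-preserving morphism of gerbes is an equivalence. Both routes ultimately rest on the same unproved-in-detail local computation (that the $H$-twisted bracket satisfies the axioms of Definition \ref{defn: courant} iff $dH=0$ — the paper likewise only sketches this), so neither is more rigorous than the other as written; yours buys conceptual clarity and a clean reduction to local statements, while the paper's buys an explicit inverse functor and avoids the stack-theoretic bookkeeping. One small point to tighten: the paper's Definition of a morphism of Courant algebroids does not require compatibility with $\langle\ ,\ \rangle$, only with $\pi$, $\pi^\dagger$ and the bracket, so your step ``compatibility with the pairing forces $\psi(\xi)=\iota_\xi B$'' should be rederived from bracket compatibility alone; this works — testing the identity $L_\xi\psi(\eta)-\iota_\eta d\psi(\xi)=\psi([\xi,\eta])$ against $\xi\mapsto f\xi$ produces the term $df\wedge(\iota_\xi\psi(\eta)+\iota_\eta\psi(\xi))$, forcing skew-symmetry — but it is not automatic from the definition as you state it.
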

\begin{proof}
We sketch a construction of a quasi-inverse. Suppose that
$(\mathcal{C},c)$ is a
$(\Omega^{k+1}_X\xrightarrow{d}\Omega^{k+2,cl}_X)$-torsor, i.e.
$\mathcal{C}$ is a $\Omega^{k+1}$-torsor and the map $c \colon
\mathcal{C} \to \Omega^{k+2,cl}_X$ satisfies $c(\nabla + \omega) =
c(\nabla) + d\omega$. We associate to it the exact Courant
algebroid which is the $\mathcal{C}$ twist of the Courant
algebroid $\mathcal{Q}_0$ of Example \ref{example: trivial ECA}
constructed as follows.

The underlying extension of $\mathcal{T}_X$ by $\Omega^k$ is the
$\mathcal{C}$ twist $\mathcal{Q}_0^\mathcal{C}$ of the trivial
extension $\mathcal{Q}_0 = \Omega^k\oplus\mathcal{T}_X$, i.e.
$\mathcal{Q}_0^\mathcal{C} =
\mathcal{C}\times_{\Omega^{k+1}_X}\mathcal{Q}_0$. Since the action
of $\Omega^{k+1}_X$ on $\mathcal{Q}_0$ preserves the symmetric
pairing, it follows that $\mathcal{Q}_0^\mathcal{C}$ is equipped
with the induced symmetric pairing. The Leibniz bracket on
$\mathcal{Q}_0^\mathcal{C}$ is given by the formula
\[
[(s_1,q_1),(s_2,q_2)] = (s_1,[q_1,q_2+\iota_{\pi(q_2)}(s_1-s_2)]_0 + \iota_{\pi(q_1)\wedge\pi(q_2)}c(s_1))\ ,
\]
where $s_i\in\mathcal{C}$, $q_i\in\mathcal{Q}_0$,
$s_1-s_2\in\Omega^{k+1}_X$ is the unique form such that $s_1 = s_2
+ (s_1-s_2)$ and $[\ ,\ ]_0$ is the bracket on $\mathcal{Q}_0$.
\end{proof}

The equivalence \eqref{ECA to torsors} induces a bijection between
the respective sets of connected components $\pi_0\ECA(X)_k \cong
H^1(X;\Omega^{k+1}_X\xrightarrow{d}\Omega^{k+2,cl}_X)$. 
The category $\ECA(X)_k$ has a canonical
structure of a Picard groupoid with the monoidal structure
(induced by the Baer sum of extensions), as does the category
$\text{$(\Omega^{k+1}_X\xrightarrow{d}\Omega^{k+2,cl}_X)$-torsors}$;
the functor \eqref{ECA to torsors} is monoidal.

\section{Transgression for Courant algebroids}\label{section: transgression for courant algebroids}

\subsection{The category of Courant-to-Lie morphisms}
Suppose that $\mathcal{Q}$ is a $(k-1)$-dimensional Courant
algebroid on $X$ (see Definition \ref{defn: courant}) and
$(\mathcal{A},\mathfrak{c})$ is marked Lie algebroid on $X^\sharp$
with $\deg\mathfrak{c} = k$ (see Definition \ref{defn: marked Lie
algd}).

\begin{definition}\label{def: CtL morphism}
A \emph{Courant-to-Lie (CtL) morphism $\mathcal{Q}
\xrightarrow{\phi} (\mathcal{A},\mathfrak{c})$} is a morphism of
$\mathcal{O}_X$-modules $\phi\colon\mathcal{Q}[1] \to \mathcal{A}$
such that the diagrams

\begin{multicols}{2}
\begin{equation}
\begin{CD}
\mathcal{Q} @>{\phi}>> \mathcal{A}^{-1} \\
@V{\pi}VV  @VV{\sigma}V \\
\mathcal{T}_X @>{\cong}>> \mathcal{T}_{X^\sharp}^{-1}
\end{CD}\label{ctl morphism anchor} \tag{A}
\end{equation}

\begin{equation}
\begin{CD}
\Omega_X^{k-1} @>{\cong}>> \left(\mathcal{O}_{X^\sharp}[k]\right)^{-1} \\
@V{\pi^\dagger}VV @VV{\cdot\mathfrak{c}}V \\
\mathcal{Q} @>{\phi}>> \mathcal{A}^{-1}
\end{CD}\label{ctl morphism coanchor} \tag{B}
\end{equation}

\begin{equation}
\begin{CD}
\mathcal{Q}\otimes_{\mathcal{O}_X}\mathcal{Q} @>{\phi\otimes\phi}>> \mathcal{A}^{-1}\otimes_{\mathcal{O}_X}\mathcal{A}^{-1} \\
@V{\langle\ ,\ \rangle}VV @VV{[\ ,\ ]^{-1,-1}}V \\
\Omega_X^{k-2} @>{\cdot\mathfrak{c}}>> \mathcal{A}^{-2}
\end{CD} \label{ctl morphism pairing} \tag{C}
\end{equation}

\begin{equation}
\begin{CD}
\mathcal{Q}\otimes_{\mathbb{C}}\mathcal{Q} @>{\phi\otimes\phi}>> \mathcal{A}^{-1}\otimes_{\mathbb{C}}\mathcal{A}^{-1} \\
@V{\{\ ,\ \}}VV @VV{[\delta(\ ) ,\ ]^{0,-1}}V \\
\mathcal{Q} @>{\phi}>> \mathcal{A}^{-1}
\end{CD}\label{ctl morphism bracket} \tag{D}
\end{equation}
\end{multicols}
are commutative. Note that since $\mathcal{O}_{X^\sharp}^i = 0$ for $i\leqslant -1$ the bracket $[\ ,\ ]^{-1,-1}$ (in the diagram \eqref{ctl morphism pairing}) is $\mathcal{O}_X$-bilinear (see Remark \ref{remark: bracket in negative degrees}).
\end{definition}

A morphism $(\mathcal{Q}_1 \xrightarrow{\phi_1}
(\mathcal{A}_1,\mathfrak{c}_1)) \xrightarrow{\Phi} (\mathcal{Q}_2
\xrightarrow{\phi_2} (\mathcal{A}_2,\mathfrak{c}_2))$ of
CtL-morphisms is a pair $\Phi = (\Phi_C,\Phi_L)$, where $\Phi_C
\colon \mathcal{Q}_1 \to \mathcal{Q}_2$ is a morphism of Courant
algebroids and $\Phi_L \colon (\mathcal{A}_1,\mathfrak{c}_1) \to
(\mathcal{A}_2,\mathfrak{c}_2)$ is a morphism of marked Lie
algebroids such that the diagram
\[
\begin{CD}
\mathcal{Q}_1[1] @>{\Phi_C[1]}>>  \mathcal{Q}_2[1] \\
@V{\phi_1}VV @VV{\phi_2}V \\
\mathcal{A}_1  @>{\Phi_L}>>  \mathcal{A}_2
\end{CD}
\]
is commutative. We leave it to the reader to check that with the above definitions CtL morphisms form a category.

\begin{notation}
We denote the category of Courant-to-Lie morphisms as above by $\mathtt{CtL}(X)_k$.
\end{notation}

The assignments $(\mathcal{Q} \xrightarrow{\phi} (\mathcal{A},\mathfrak{c})) \mapsto \mathcal{Q}$, $\Phi = (\Phi_C,\Phi_L) \mapsto \Phi_C$ define a functor
\begin{equation}\label{LA fibered over CA}
q\colon \mathtt{CtL}(X)_k \to \CA(X)_{k-1} \ .
\end{equation}

Suppose that $\psi_1\colon \mathcal{Q}_1 \to \mathcal{Q}_2$ is a
morphism in $\CA(X)_{k-1}$ and $(\mathcal{Q}_2
\xrightarrow{\phi_2} (\mathcal{A}_2,\mathfrak{c}_2)) \in
\mathtt{CtL}(X)_k$. We leave it to the reader to verify that the
composition $\mathcal{Q}_1[1] \xrightarrow{\psi_1[1]}
\mathcal{Q}_2[1] \xrightarrow{\phi_2[1]} \mathcal{A}_2)$ satisfies
the conditions of Definition \ref{def: CtL morphism}, i.e. defines
an object of $\mathtt{CtL}(X)_k$.

\begin{notation}
The object $(\mathcal{Q}_1 \xrightarrow{\phi_2\circ\psi_1} (\mathcal{A}_2,\mathfrak{c}_2))$ will be denoted $\psi_1^*(\mathcal{Q}_2 \xrightarrow{\phi_2} (\mathcal{A}_2,\mathfrak{c}_2))$.
\end{notation}

There is a canonical morphism
\begin{equation}\label{cartesian morphism}
\widetilde{\psi_1} \colon
\psi_1^*(\mathcal{Q}_2\xrightarrow{\phi_2}
(\mathcal{A}_2,\mathfrak{c}_2)) \to (\mathcal{Q}_2
\xrightarrow{\phi_2} (\mathcal{A},\mathfrak{c}_2))
\end{equation}
in $\mathtt{CtL}(X)$ given by $\widetilde{\psi_1} = (\psi_1, \id_\mathcal{A})$.

\begin{proposition}
{~}
\begin{enumerate}
\item The morphism \eqref{cartesian morphism} is cartesian.

\item The functor \eqref{LA fibered over CA} is a Grothendieck fibration.
\end{enumerate}
\end{proposition}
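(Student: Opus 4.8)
The plan is to verify directly that the morphism \eqref{cartesian morphism} satisfies the universal property of a cartesian arrow over $\psi_1$ with respect to the functor $q$ of \eqref{LA fibered over CA}, and then to deduce part (2) formally: since the pullback construction $\psi_1^*$ is available for every object of $\mathtt{CtL}(X)_k$ lying over $\mathcal{Q}_2$ and every morphism $\psi_1$ of the base $\CA(X)_{k-1}$, the arrows \eqref{cartesian morphism} furnish cartesian lifts of all base morphisms, which is precisely the defining property of a Grothendieck fibration. The whole argument is formal; no analytic or computational input beyond functoriality of the shift $[1]$ is needed.

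To prove (1) I would fix the target object $(\mathcal{Q}_2 \xrightarrow{\phi_2} (\mathcal{A}_2,\mathfrak{c}_2))$ and write $\widetilde{\psi_1} = (\psi_1, \id_{\mathcal{A}_2})$. Let $(\mathcal{Q}_0 \xrightarrow{\phi_0} (\mathcal{A}_0,\mathfrak{c}_0))$ be an arbitrary object, let $g = (g_C, g_L)$ be a morphism from it to $(\mathcal{Q}_2 \xrightarrow{\phi_2} (\mathcal{A}_2,\mathfrak{c}_2))$, and suppose $q(g) = g_C$ factors in $\CA(X)_{k-1}$ as $g_C = \psi_1 \circ u$ for some $u\colon \mathcal{Q}_0 \to \mathcal{Q}_1$. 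A lift of $u$ along $\widetilde{\psi_1}$ is a CtL-morphism $\overline{u} = (\overline{u}_C, \overline{u}_L)$ with $q(\overline{u}) = \overline{u}_C = u$ and $\widetilde{\psi_1}\circ\overline{u} = g$. The last condition unpacks on the Courant side into $\psi_1\circ\overline{u}_C = g_C$, which holds automatically since $\overline{u}_C = u$, and on the Lie side into $\id_{\mathcal{A}_2}\circ\overline{u}_L = g_L$, i.e. $\overline{u}_L = g_L$. Thus the only possible lift is $\overline{u} = (u, g_L)$, which settles uniqueness immediately.

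For existence it remains to check that the pair $(u, g_L)$ is a genuine morphism in $\mathtt{CtL}(X)_k$. Its Courant component $u$ is a morphism of Courant algebroids by hypothesis and $g_L$ is a morphism of marked Lie algebroids (being the Lie component of $g$), so the sole nontrivial point is the commutativity of the square relating the structure maps $\phi_2\circ\psi_1[1]$ of $\psi_1^*(\mathcal{Q}_2\xrightarrow{\phi_2}(\mathcal{A}_2,\mathfrak{c}_2))$ and $\phi_0$. Using functoriality of the shift and the factorization $g_C = \psi_1\circ u$ one computes
\[
(\phi_2 \circ \psi_1[1]) \circ u[1] = \phi_2 \circ (\psi_1 \circ u)[1] = \phi_2 \circ g_C[1] = g_L \circ \phi_0 ,
\]
where the final equality is exactly the compatibility square expressing that $g = (g_C, g_L)$ is a morphism in $\mathtt{CtL}(X)_k$. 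Hence $(u, g_L)$ is a morphism and $\widetilde{\psi_1}$ is cartesian, establishing (1). Part (2) then follows as indicated: for every object over $\mathcal{Q}_2$ and every base morphism $\psi_1\colon \mathcal{Q}_1 \to \mathcal{Q}_2$ the arrow \eqref{cartesian morphism} is a cartesian lift of $\psi_1$, so $q$ is a Grothendieck fibration.

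I do not expect a genuine obstacle; the argument is purely diagrammatic. The only point deserving care is the bookkeeping of components: recognizing that the Lie-algebroid component of any lift is forced to equal $g_L$, which hands uniqueness for free, while the Courant component is prescribed by the chosen factorization, so that the entire content of existence reduces to the single compatibility square, which in turn collapses to functoriality of $[1]$ applied to the data already packaged into $g$.
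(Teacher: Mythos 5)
Your argument is correct and follows essentially the same route as the paper: the unique candidate lift is forced to be $(u,g_L)$, and existence reduces to the single compatibility square, which follows from $g_C=\psi_1\circ u$ and the fact that $g$ is a morphism. The paper simply asserts that this pair is the required unique morphism, whereas you spell out the verification that it is indeed a morphism of $\mathtt{CtL}(X)_k$; this is the same proof with the routine check made explicit.
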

\begin{proof}
Suppose that $\Phi = (\Phi_C,\Phi_L) \colon (\mathcal{Q}
\xrightarrow{\phi} (\mathcal{A},\mathfrak{c})) \to (\mathcal{Q}_2
\xrightarrow{\phi} (\mathcal{A}_2,\mathfrak{c}_2))$ with $q(\Phi)
= \Phi_C = \psi_1\circ\psi$. Then $\Psi = (\psi, \Phi_L) \colon
(\mathcal{Q} \xrightarrow{\phi} (\mathcal{A},\mathfrak{c})) \to
\psi_1^*(\mathcal{Q}_2 \xrightarrow{\phi_2}
(\mathcal{A}_2,\mathfrak{c}_2))$ is the unique morphism such that
$\Phi = \widetilde{\psi_1}\circ\Psi$.

Since for any morphism $\psi_1\colon \mathcal{Q}_1 \to
\mathcal{Q}_2$ in $\CA(X)_{k-1}$ and any object $(\mathcal{Q}_2
\xrightarrow{\phi_2} (\mathcal{A}_2,\mathfrak{c}_2)) \in
\mathtt{CtL}(X)_k$ above $\mathcal{Q}_2$ there is a cartesian
morphism $\widetilde{\psi_1}$ with $q(\widetilde{\psi_1}) =
\psi_1$, it follows that the functor \eqref{LA fibered over CA} is
a fibration.
\end{proof}

For a $(k-1)$-dimensional Courant algebroid $\mathcal{Q}$ we consider the fiber $\mathtt{CtL}(X)_k/\mathcal{Q}$ of \eqref{LA fibered over CA} over $\mathcal{Q}$. Explicitly, the objects of $\mathtt{CtL}(X)_k/\mathcal{Q}$ are CtL morphisms $\mathcal{Q} \xrightarrow{\phi} (\mathcal{A},\mathfrak{c})$. A morphism $\Phi = (\Phi_C,\Phi_L)$ is in $\mathtt{CtL}(X)_k/\mathcal{Q}$ if $\Phi_C = \id_\mathcal{Q}$.

The rest of this section is devoted to the proof of the following theorem.

\begin{theorem}\label{thm: Q/CtL has initial object}
For any Courant algebroid $\mathcal{Q}$ the category $\mathtt{CtL}(X)_k/\mathcal{Q}$ has an initial object.
\end{theorem}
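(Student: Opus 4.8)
The plan is to construct the initial object explicitly and to identify it with the transgression $\tau\mathcal{Q}$ promised in the introduction. First I would build its underlying marked Lie algebroid. Let $\overline{\mathcal{Q}} = \coker(\pi^\dagger)$ be the associated Lie algebroid, and form its transgression $\overline{\mathcal{Q}}^\sharp = \pr_*\ev^*\overline{\mathcal{Q}}$, an $\mathcal{O}_{X^\sharp}$-Lie algebroid by Lemma \ref{lemma: LA structure on pr-ev}. The candidate $\tau\mathcal{Q}$ is then an $\mathcal{O}_{X^\sharp}[k]$-extension of $\overline{\mathcal{Q}}^\sharp$ whose degree $-1$ component is $\mathcal{Q}$ itself: its underlying module fits into $\mathcal{O}_{X^\sharp}[k] \xrightarrow{\cdot\mathfrak{c}_0} \tau\mathcal{Q} \to \overline{\mathcal{Q}}^\sharp \to 0$, the marking $\mathfrak{c}_0$ is the image of $1$, and $\phi_0\colon \mathcal{Q}[1] \to \tau\mathcal{Q}$ is the inclusion of the degree $-1$ part. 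The anchor and differential are inherited from $\overline{\mathcal{Q}}^\sharp$ (so that $\delta$ restricts on degree $-1$ to a map $\mathcal{Q} \to (\tau\mathcal{Q})^0$), while the bracket is obtained by lifting the transgression bracket of \ref{subsection: Transgression for Lie algebroids} on $\overline{\mathcal{Q}}^\sharp$ and adding an $\mathcal{O}_{X^\sharp}[k]$-valued correction prescribed by the pairing $\ip$ and the Leibniz bracket $\{\ ,\ \}$. The correction is arranged precisely so that $[\phi_0(q_1),\phi_0(q_2)] = \langle q_1,q_2\rangle\cdot\mathfrak{c}_0$ and $[\delta\phi_0(q_1),\phi_0(q_2)] = \phi_0(\{q_1,q_2\})$ hold by construction.

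Next I would verify that $(\tau\mathcal{Q}, \mathfrak{c}_0)$ is genuinely a marked $\mathcal{O}_{X^\sharp}$-Lie algebroid — centrality of $\mathfrak{c}_0$, $\mathcal{O}_{X^\sharp}$-bilinearity and graded skew-symmetry of the bracket, compatibility of $\delta$ with the bracket and the anchor, and the Jacobi identity — and that $\phi_0$ is a Courant-to-Lie morphism, i.e. that the diagrams \eqref{ctl morphism anchor}--\eqref{ctl morphism bracket} commute. After unwinding the definitions each of these reduces to one of the Courant axioms: \eqref{complex} together with the anchor of $\overline{\mathcal{Q}}^\sharp$ yields \eqref{ctl morphism anchor}, \eqref{adjunction} yields \eqref{ctl morphism coanchor}, \eqref{ip jacobi} (the graded-Lie condition on the pairing) yields \eqref{ctl morphism pairing}, and \eqref{leibniz} with \eqref{symmetrizer} yields \eqref{ctl morphism bracket}. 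These are straightforward but lengthy identity checks, which I would delegate to Section \ref{section: proofs}. The Jacobi identity for the corrected bracket is the main computational obstacle, since it is exactly where \eqref{ip invariance}, \eqref{forms left ideal} and \eqref{symmetrizer} must be combined.

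Finally I would establish the universal property. Given an arbitrary Courant-to-Lie morphism $\phi\colon \mathcal{Q}[1] \to (\mathcal{A},\mathfrak{c})$ I must produce a unique morphism of marked Lie algebroids $\Psi\colon (\tau\mathcal{Q},\mathfrak{c}_0) \to (\mathcal{A},\mathfrak{c})$ with $\Psi\circ\phi_0 = \phi$. Uniqueness is immediate: $\tau\mathcal{Q}$ is generated, as an $\mathcal{O}_{X^\sharp}$-Lie algebroid equipped with its differential, by the degree $-1$ part $\phi_0(\mathcal{Q})$ together with $\mathfrak{c}_0$, so $\Psi$ is forced to equal $\phi$ on $\phi_0(\mathcal{Q})$ and to send $\mathfrak{c}_0$ to $\mathfrak{c}$, and is then determined on all of $\tau\mathcal{Q}$ by $\mathcal{O}_{X^\sharp}$-linearity and compatibility with $\delta$ and the bracket. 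For existence one checks that the relations among these generators — namely the identities built into $\tau\mathcal{Q}$ in the first step — are respected by their images in $\mathcal{A}$; this is precisely the content of the four conditions \eqref{ctl morphism anchor}--\eqref{ctl morphism bracket} imposed on $\phi$. At the level of the extension presentation, $\phi$ descends (using \eqref{ctl morphism coanchor}) to a map $\overline{\mathcal{Q}} \to \overline{\mathcal{A}}$ of associated Lie algebroids, which by functoriality of $(\ )^\sharp$ gives $\overline{\mathcal{Q}}^\sharp \to \overline{\mathcal{A}}$, and \eqref{ctl morphism pairing} together with \eqref{ctl morphism bracket} ensures that this is compatible with the markings and hence lifts to the desired $\Psi$. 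The delicate point here is to make the informal ``generators and relations'' description of $\tau\mathcal{Q}$ rigorous, which the explicit extension presentation of the first step is designed to handle.
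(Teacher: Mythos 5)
Your overall architecture matches the paper's: build a marked Lie algebroid with $\mathcal{Q}$ sitting in degree $-1$, check that the inclusion is a CtL morphism, and obtain initiality from a generators-and-relations presentation. But the central step --- actually constructing $\tau\mathcal{Q}$ --- is missing, and what you put in its place does not determine the object. You describe $\tau\mathcal{Q}$ only by properties it should have: an ``extension'' of $\overline{\mathcal{Q}}^\sharp$ by $\mathcal{O}_{X^\sharp}[k]$ with degree $-1$ part $\mathcal{Q}$, carrying a bracket that ``lifts'' the one on $\overline{\mathcal{Q}}^\sharp$ with an unspecified $\mathcal{O}_{X^\sharp}[k]$-valued correction ``arranged so that'' two identities hold. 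Extensions with prescribed quotient and prescribed sub are not unique, and a bracket on an extension is not determined by the quotient bracket together with its values on two families of generators; the whole difficulty of the theorem is to write down one coherent formula and prove it is well defined. The paper's device, which you never arrive at, is to define $\tau\mathcal{Q}$ as the pushout \eqref{defn tauQ coCart square} of $\pr_*\ev^*\mathcal{Q}$ along the integration map \eqref{integration map} $\pr_*\ev^*\Omega^{k-1}_X\to\mathcal{O}_{X^\sharp}[k]$, to define the bracket on the ambient module $\mathcal{O}_{X^\sharp}[k]\oplus\mathcal{O}_{X^\sharp}[\epsilon]\otimes_{\mathbb{C}}\mathcal{Q}$ by the explicit Leibniz-rule extension \eqref{bracket no scalars}, and then to prove (Lemma \ref{lemma: two-sided ideal}) that the kernel of the projection onto $\tau\mathcal{Q}$ is a two-sided ideal. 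Without this, or an equivalent explicit presentation, neither the existence of the bracket nor the existence half of your universal-property argument can be completed: the ``relations among the generators'' you invoke in the last paragraph are exactly what you have not written down.

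Two smaller inaccuracies. First, for a general Courant algebroid the map $\mathcal{O}_{X^\sharp}[k]\xrightarrow{\cdot\mathfrak{c}}\tau\mathcal{Q}$ need not be injective (since $\pi^\dagger$ need not be), so $\tau\mathcal{Q}$ is an honest $\mathcal{O}_{X^\sharp}[k]$-extension of $\overline{\mathcal{Q}}^\sharp$ only in the Courant-extension case (see Proposition \ref{prop: properties of transgression}); your framing of the first step therefore cannot serve as a definition in general. Second, the commutativity of \eqref{ctl morphism pairing} and \eqref{ctl morphism bracket} for the canonical map $\mathcal{Q}[1]\to\tau\mathcal{Q}$ is not deduced from \eqref{ip jacobi} and \eqref{symmetrizer}: those squares hold by definition of the bracket on $\tau\mathcal{Q}$, while the Courant axioms are consumed in proving that the bracket descends and is skew-symmetric and Jacobi. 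Your uniqueness argument for $\Psi$ is correct in substance and agrees with the paper's.
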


\begin{notation}
For a Courant algebroid $\mathcal{Q}$ we will denote the initial
object of $\mathtt{CtL}(X)_k/\mathcal{Q}$ by $\tau\mathcal{Q}$. By
abuse of notation we will refer to the marked Lie algebroid
constituent thereof by $\tau\mathcal{Q}$ as well.
\end{notation}

\begin{corollary}\label{cor: tau section}
The assignment $\mathcal{Q} \mapsto \tau\mathcal{Q}$ defines a section
\[
\tau \colon \CA(X)_{k-1} \longrightarrow \mathtt{CtL}(X)_k
\]
of \eqref{LA fibered over CA} (i.e. a functor which satisfies $q\circ\tau = \id$) which is left adjoint to $q$.
\end{corollary}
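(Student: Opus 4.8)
The plan is to deduce the corollary from Theorem~\ref{thm: Q/CtL has initial object} together with the fact (established just above) that \eqref{LA fibered over CA} is a Grothendieck fibration, via the standard principle that a fibration whose fibers possess initial objects admits a left-adjoint section. Concretely, for each Courant algebroid $\mathcal{Q}$ I set $\tau\mathcal{Q}$ to be the initial object of the fiber $\mathtt{CtL}(X)_k/\mathcal{Q}$ supplied by the theorem. Since this object lies in the fiber over $\mathcal{Q}$ we have $q(\tau\mathcal{Q}) = \mathcal{Q}$ on the nose, and I take the candidate unit to be $\eta_\mathcal{Q} = \id_\mathcal{Q}\colon\mathcal{Q}\to q(\tau\mathcal{Q})$.

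The heart of the argument is to verify that $(\tau\mathcal{Q},\eta_\mathcal{Q})$ is a universal arrow from $\mathcal{Q}$ to $q$; granting this for every $\mathcal{Q}$, the theory of adjoint functors assembles the objects $\tau\mathcal{Q}$ into a functor $\tau$ left adjoint to $q$ with unit $\eta$, and since $\eta_\mathcal{Q}=\id_\mathcal{Q}$ one obtains $q\circ\tau=\id$, so that $\tau$ is the asserted section. So fix an object $Y = (\mathcal{Q}'\xrightarrow{\phi'}(\mathcal{A}',\mathfrak{c}'))$ of $\mathtt{CtL}(X)_k$ with $q(Y)=\mathcal{Q}'$ and a morphism $\psi\colon\mathcal{Q}\to\mathcal{Q}'$ in $\CA(X)_{k-1}$. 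I must produce a unique morphism $g\colon\tau\mathcal{Q}\to Y$ in $\mathtt{CtL}(X)_k$ with $q(g)=\psi$.

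For existence, form the reindexing $\psi^*Y$, an object over $\mathcal{Q}$, together with the cartesian morphism $\widetilde{\psi}\colon\psi^*Y\to Y$ lying over $\psi$ (see \eqref{cartesian morphism}). Because $\tau\mathcal{Q}$ is initial in the fiber over $\mathcal{Q}$ there is a unique morphism $\iota\colon\tau\mathcal{Q}\to\psi^*Y$ with $q(\iota)=\id_\mathcal{Q}$, and the composite $g:=\widetilde{\psi}\circ\iota$ then satisfies $q(g)=\psi\circ\id_\mathcal{Q}=\psi$. For uniqueness, suppose $g'\colon\tau\mathcal{Q}\to Y$ also satisfies $q(g')=\psi$. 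By the cartesian property of $\widetilde{\psi}$ the map $g'$ factors uniquely as $g'=\widetilde{\psi}\circ h$ with $q(h)=\id_\mathcal{Q}$; but then $h$ is a morphism out of the initial object $\tau\mathcal{Q}$ inside the fiber over $\mathcal{Q}$, so $h=\iota$ and hence $g'=g$. This establishes the universal property, and with it the adjunction $\tau\dashv q$ and the identity $q\circ\tau=\id$.

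I do not expect a genuine obstacle here: every ingredient — the fiberwise initial object, the cartesian lifts, and their factorization property — is already in hand, and the argument is the textbook construction of an adjoint out of a family of universal arrows. The only points demanding a little care are bookkeeping ones: checking that $q(\tau\mathcal{Q})=\mathcal{Q}$ forces the unit to be the identity (so that one gets an honest section rather than merely a section up to isomorphism), and invoking the cartesian property of $\widetilde{\psi}$ for \emph{both} halves of the universal property instead of re-proving the factorization by hand.
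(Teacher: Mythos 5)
Your proposal is correct and follows essentially the same route as the paper: the paper likewise defines the would-be adjunct of $\psi\colon\mathcal{Q}_1\to\mathcal{Q}_2$ as the composite $\tau\mathcal{Q}_1\to\psi^*\tau\mathcal{Q}_2\xrightarrow{\widetilde{\psi}}\tau\mathcal{Q}_2$ using initiality in the fiber together with the cartesian lift, and then exhibits the hom-set bijection directly. Your packaging via universal arrows just makes explicit the uniqueness step (via the cartesian property of $\widetilde{\psi}$) and the functoriality of $\tau$, which the paper leaves to the reader.
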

\begin{proof}
Suppose that $\psi \colon \mathcal{Q}_1 \to \mathcal{Q}_2$ is a
morphism in $\CA(X)_{k-1}$. Let $\tau(\psi) \colon
\tau\mathcal{Q}_1 \to \tau\mathcal{Q}_2$ denote the composition
$\tau\mathcal{Q}_1 \to \psi^*\tau\mathcal{Q}_2
\xrightarrow{\widetilde{\psi}} \tau\mathcal{Q}_2$. We leave it to
the reader to verify that the assignments $\mathcal{Q} \mapsto
\tau\mathcal{Q}$, $\psi \mapsto \tau(\psi)$ define a functor. It
is clear from the definition that $q\circ\tau = \id$.

For $\mathcal{Q}_1\in\CA(X)_{k-1}$ and $\phi = (\mathcal{Q}_2
\xrightarrow{\phi} (\mathcal{A}_2,\mathfrak{c}_2)) \in
\mathtt{CtL}(X)_k$ with $q(\phi) = \mathcal{Q}_2$ the map
\[
\Hom_{\mathtt{CtL}(X)_k}(\tau\mathcal{Q}_1, \phi) \to \Hom_{\CA(X)_{k-1}}(\mathcal{Q}_1, q(\phi))
\]
is defined by $\Phi = (\Phi_C,\Phi_L) \mapsto \Phi_C$. This map is
clearly natural (i.e. a morphism of bi-functors) and is, in fact
an isomorphism of such with the inverse given by $\psi \mapsto
(\psi, \Psi)$ where $\Psi$ is the composition $\tau\mathcal{Q}_1
\to \psi^*\tau\mathcal{Q}_2 \xrightarrow{\widetilde{\psi}}
\tau\mathcal{Q}_2 \to \mathcal{A}_2$.
\end{proof}

\subsection{Construction of a marked Lie algebroid}

In this section we are going to construct a marked Lie algebroid
that we will denote by $(\tau\mathcal{Q},\mathfrak{c})$. Let
$\tau\mathcal{Q}$ denote the cokernel of the map
\[
\tau\mathcal{Q} := \coker(\pr_*\ev^*\Omega_X^{k-1}
\xrightarrow{(\int,-\pr_*\ev^*(\pi^\dagger))}
\mathcal{O}_{X^\sharp}[k]\oplus \pr_*\ev^*\mathcal{Q}) \ ,
\]
where $\int$ is the map \eqref{integration map}. In other words, the square
\begin{equation}\label{defn tauQ coCart square}
\begin{CD}
\pr_*\ev^*\Omega_X^{k-1} @>{\pr_*\ev^*(\pi^\dagger)}>> \pr_*\ev^*\mathcal{Q} \\
@V{\int}VV @VVV \\
\mathcal{O}_{X^\sharp}[k] @>>> \tau\mathcal{Q}
\end{CD}
\end{equation}
is cocartesian.

\subsubsection{The bracket on $\tau\mathcal{Q}$.}\label{subsubsection: the bracket on tauQ}
Let
\begin{multline}\label{bracket no scalars}
[\ ,\ ] \colon (\mathcal{O}_{X^\sharp}[k]\oplus
\mathcal{O}_{X^\sharp}[\epsilon]\otimes_\mathbb{C}\mathcal{Q})\otimes_\mathbb{C}(\mathcal{O}_{X^\sharp}[k]\oplus
\mathcal{O}_{X^\sharp}[\epsilon]\otimes_\mathbb{C}\mathcal{Q}) \to
\\
\mathcal{O}_{X^\sharp}[k]\oplus
\mathcal{O}_{X^\sharp}[\epsilon]\otimes_\mathbb{C}\mathcal{Q}
\end{multline}
denote the extension by Leibniz rule of the binary operation on
$\mathcal{O}_{X^\sharp}[k]\oplus
\mathcal{O}_{\vec{\mathfrak{t}}}\otimes_\mathbb{C}\mathcal{Q}$
determined by
\begin{enumerate}
\item $[\mathcal{O}_{X^\sharp}[k],\mathcal{O}_{X^\sharp}[k]] = 0$

\item $[\epsilon\otimes q, \beta] = -(-1)^{\beta+k}[\beta,
\epsilon\otimes q] = \iota_{\pi(q)}\beta$

\item $[1\otimes q,
\beta] = -[\beta, 1\otimes q] = L_{\pi(q)}\beta$

\item
$[\epsilon\otimes q_1,\epsilon\otimes q_2] = \langle
q_1,q_2\rangle \in (\mathcal{O}_{X^\sharp}[k])^{-2}$

\item
$[1\otimes q_1,\epsilon\otimes q_2] = \epsilon\otimes\{q_1,q_2\}$

\item $[\epsilon\otimes q_1,1\otimes q_2] = -d\langle
q_1,q_2\rangle + \epsilon\otimes\{q_1,q_2\}$
\end{enumerate}
where $q,q_1,q_2\in\mathcal{Q}$ and
$\beta\in\mathcal{O}_{X^\sharp}$.
\begin{notation}
From now on we will denote $\alpha\otimes1\otimes q$ (respectively
$\alpha\otimes\epsilon\otimes q$) by $\alpha\otimes q$
(respectively $\alpha\epsilon\otimes q$)
\end{notation}
A general element of $\mathcal{O}_{X^\sharp}[k]\oplus
\mathcal{O}_{X^\sharp}[\epsilon]\otimes_\mathbb{C}\mathcal{Q}$ is
sum of ones of the form $\omega$, $\alpha\epsilon\otimes q$ and
$\beta\otimes r$, where
$\omega,\alpha,\beta\in\mathcal{O}_{X^{\sharp}}$ and
$q,r\in\mathcal{Q}$. The bracket \eqref{bracket no scalars} of two
such is given explicitly by formulas formulas \eqref{bracket with
scalars-1}--\eqref{bracket with scalars-6} below, where
$\alpha,\alpha_1,\alpha_2,\beta\in\mathcal{O}_{X^{\sharp}}$ are
homogeneous elements and $q_1,q_2\in\mathcal{Q}$.
\begin{multline}\label{bracket with scalars-1}
[\alpha_1\epsilon\otimes q_1, \alpha_2\epsilon\otimes
q_2] = \\
\alpha_1\wedge(\iota_{\pi(q_1)}\alpha_2)\epsilon\otimes
q_2+(-1)^{(\alpha_1-1)\alpha_2}\alpha_2\wedge\alpha_1\wedge\langle
q_1,q_2\rangle\\
+(-1)^{(\alpha_1-1)\alpha_2+\alpha_1}\alpha_1\wedge(\iota_{\pi(q_2)}\alpha_1)\epsilon\otimes
q_1
\end{multline}

\begin{multline}\label{bracket with scalars-2}
[\alpha_1\otimes q_1, \alpha_2\epsilon\otimes q_2]= \\
\alpha_1\wedge(L_{\pi(q_1)}\alpha_2)\epsilon\otimes
q_2+(-1)^{\alpha_1\alpha_2}\alpha_2\wedge\alpha_2\epsilon\otimes\{q_1,q_2\}\\
-(-1)^{\alpha_1\alpha_2+\alpha_1}\alpha_2\wedge(\iota_{\pi(q_2)}\alpha_1)\otimes
q_1.
\end{multline}

\begin{multline}\label{bracket with scalars-3}
[\alpha_1\epsilon\otimes q_1, \alpha_2\otimes
q_2] = \\
\alpha_1\wedge(\iota_{\pi(q_1)}\alpha_2)\otimes
q_2-(-1)^{(\alpha_1-1)\alpha_2}\alpha_2\wedge\alpha_1\wedge
d\langle q_1,q_2\rangle\\
+
(-1)^{(\alpha_1-1)\alpha_2}\alpha\wedge\alpha_1\epsilon\otimes\{q_1,q_2\}
-
(-1)^{(\alpha_1-1)\alpha_2}\alpha_2\wedge(L_{\pi(q_2)}\alpha_1)\epsilon\otimes
q_1.
\end{multline}

\begin{multline}\label{bracket with scalars-4}
[\alpha_1\otimes q_1, \alpha_2\otimes
q_2] = \\
\alpha_1\wedge(L_{\pi(q_1)}\alpha_2)\otimes
q_2+(-1)^{\alpha_1\alpha_2}\alpha_2\wedge\alpha_1\otimes\{q_1,q_2\}\\
-(-1)^{\alpha_1\alpha_2}\alpha_2\wedge(L_{\pi(q_2)}\alpha_1)\otimes
q_1.
\end{multline}

\begin{equation}\label{bracket with scalars-5}
[\alpha\epsilon\otimes q,\beta]=-(-1)^{\alpha\beta+\beta}[\beta,
\alpha\epsilon\otimes q]=\alpha\wedge\iota_{\pi(q)}\beta
\end{equation}

\begin{equation}\label{bracket with scalars-6}
[\alpha\otimes q,\beta]=-(-1)^{\alpha\beta}[\beta,\alpha\otimes
q]=\alpha\wedge L_{\pi(q)}\beta .
\end{equation}

\begin{lemma} \label{lemma: bracket Leibniz}
The operation \eqref{bracket no scalars} satisfies
\begin{equation}\label{bracket and differential}
\partial([a,b])=[\partial(a),b]+(-1)^{a}[a,\partial(b)],
\end{equation}
for any pair of homogeneous elements
$a,b\in\mathcal{O}_{X^\sharp}[k]\oplus
\mathcal{O}_{X^\sharp}[\epsilon]\otimes_\mathbb{C}\mathcal{Q}$.
\end{lemma}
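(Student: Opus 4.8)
The content of the lemma is that $\partial$ is a graded derivation of degree $+1$ for the operation \eqref{bracket no scalars}. The plan is to use $\mathbb{C}$-bilinearity to cut the verification down to a finite list of cases and then feed in Cartan's calculus together with the defining relations (1)--(6). Since both sides of \eqref{bracket and differential} are $\mathbb{C}$-bilinear in $(a,b)$, and every homogeneous element is a sum of elements of the three types $\omega\in\mathcal{O}_{X^\sharp}[k]$, $\alpha\epsilon\otimes q$ and $\beta\otimes r$, it suffices to verify the identity when $a$ and $b$ each range over these types; the relevant brackets are then precisely \eqref{bracket with scalars-1}--\eqref{bracket with scalars-6}, together with the trivial brackets with $\omega$. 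Here $\partial$ acts by $\partial\omega=d\omega$, $\partial(\alpha\otimes q)=d\alpha\otimes q$ and $\partial(\alpha\epsilon\otimes q)=d\alpha\,\epsilon\otimes q+(-1)^{\alpha}\alpha\otimes q$, the last summand being the contribution of $\partial_\epsilon\epsilon=1$.

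When one argument lies in $\mathcal{O}_{X^\sharp}[k]$ the identity reduces, via \eqref{bracket with scalars-5} and \eqref{bracket with scalars-6}, to relations among $d$, $L_{\pi(q)}$ and $\iota_{\pi(q)}$ acting on forms, and follows from the Cartan formulas $L_\xi=d\iota_\xi+\iota_\xi d$ and $[d,L_\xi]=0$ (with $[\iota_\xi,\iota_\eta]=0$ and $\iota_{[\xi,\eta]}=[L_\xi,\iota_\eta]$ for the terms in which two anchors meet). For instance, applying $\partial$ to relation (2) and invoking relation (3) yields exactly $d\,\iota_{\pi(q)}\beta=L_{\pi(q)}\beta-\iota_{\pi(q)}\,d\beta$. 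The genuinely new phenomenon occurs when both arguments carry a $\mathcal{Q}$-factor, because the $\partial_\epsilon$-term converts $\epsilon\otimes q$ into $1\otimes q$ and thereby couples relations (4)--(6). At unit form-coefficient one finds $\partial[\epsilon\otimes q_1,\epsilon\otimes q_2]=[1\otimes q_1,\epsilon\otimes q_2]-[\epsilon\otimes q_1,1\otimes q_2]$, and substituting (5) and (6) the two copies of $\epsilon\otimes\{q_1,q_2\}$ cancel, leaving $d\langle q_1,q_2\rangle=\partial\langle q_1,q_2\rangle$ as required; the term $-d\langle q_1,q_2\rangle$ in (6), whose presence reflects the symmetrizer axiom \eqref{symmetrizer}, is exactly what makes this cancellation produce the de Rham differential. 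The analogous checks for (5) and (6) use only $d^2=0$ and the degree-zero component $[1\otimes q_1,1\otimes q_2]=1\otimes\{q_1,q_2\}$, and extending each of these over a general form-coefficient $\alpha$ merely reinstates the anchor terms of the Leibniz rule, again handled by the Cartan identities above.

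I expect the main obstacle to be organizational rather than conceptual: the Koszul-sign bookkeeping in \eqref{bracket with scalars-1}--\eqref{bracket with scalars-6}, and in particular tracking the sign $(-1)^{\alpha}$ produced by the $\partial_\epsilon$-term, which flips the $\epsilon$-parity of a summand and hence alters the sign rule under which the subsequent bracket is evaluated. Because the conceptual inputs are light, the full case-by-case computation is best relegated to Section \ref{section: proofs}.
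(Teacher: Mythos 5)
Your proposal is correct and follows essentially the same route as the paper: reduce by $\mathbb{C}$-bilinearity to the cases \eqref{bracket with scalars-1}--\eqref{bracket with scalars-6} and verify each by direct computation with the Cartan identities, tracking the extra terms produced by $\partial_\epsilon$. The paper simply displays the general-coefficient computation for the case \eqref{bracket with scalars-1} and leaves the remaining cases to the reader, whereas you display the unit-coefficient instances of rules (4)--(6) (correctly identifying the cancellation forced by the $-d\langle q_1,q_2\rangle$ term); the content is the same.
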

\begin{proof}
The proof is given in \ref{subsection: proof bracket Leibniz}
\end{proof}

Let
\[
K := \ker(\mathcal{O}_{X^\sharp}[k]\oplus
\mathcal{O}_{X^\sharp}[\epsilon]\otimes_\mathbb{C}\mathcal{Q} \to
\tau\mathcal{Q}) .
\]
It is equal to the $\mathcal{O}_{X^\sharp}$-submodule of
$\mathcal{O}_{X^\sharp}[k]\oplus
\mathcal{O}_{X^\sharp}[\epsilon]\otimes_\mathbb{C}\mathcal{Q}$
generated by
\begin{enumerate}
\item $\alpha - \epsilon\otimes\pi^\dagger(\alpha)$ for
$\alpha\in\Omega^{k-1}_X$

\item $d\alpha - 1\otimes\pi^\dagger(\alpha)$ for
$\alpha\in\Omega^{k-1}_X$

\item $\beta\wedge\alpha -
\beta\epsilon\otimes\pi^\dagger(\alpha)$ for
$\beta\in\mathcal{O}_{X^\sharp}$ and $\alpha\in\Omega^{k-1}_X$

\item $\epsilon\otimes fq - f\epsilon\otimes q$ for
$f\in\mathcal{O}_X$ and $q \in \mathcal{Q}$

\item $1\otimes fq - f\otimes q - df\epsilon\otimes q$ for
$f\in\mathcal{O}_X$ and $q \in \mathcal{Q}$
\end{enumerate}
Note that $\partial(K) \subset K$.

\begin{lemma}\label{lemma: two-sided ideal}
$K$ is a two-sided ideal with respect to the operation
\eqref{bracket no scalars}, i.e.
\[
[\mathcal{O}_{X^\sharp}[k]\oplus
\mathcal{O}_{X^\sharp}[\epsilon]\otimes_\mathbb{C}\mathcal{Q}, K]
\subset K,\ \ \ [K, \mathcal{O}_{X^\sharp}[k]\oplus
\mathcal{O}_{X^\sharp}[\epsilon]\otimes_\mathbb{C}\mathcal{Q}]
\subset K \ .
\]
\end{lemma}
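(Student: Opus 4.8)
The plan is to reduce the two inclusions to a finite collection of brackets of generators, using that the operation \eqref{bracket no scalars} is $\mathbb{C}$-bilinear and satisfies the Leibniz rule in each of its arguments with respect to the $\mathcal{O}_{X^\sharp}$-module structure (this is what the construction in \ref{subsubsection: the bracket on tauQ} produces, and is made explicit in \eqref{bracket with scalars-1}--\eqref{bracket with scalars-6}). Write $M := \mathcal{O}_{X^\sharp}[k]\oplus\mathcal{O}_{X^\sharp}[\epsilon]\otimes_\mathbb{C}\mathcal{Q}$, and let $\sigma$ denote the \emph{symbol} of a homogeneous element, i.e. the derivation of $\mathcal{O}_{X^\sharp}$ by which it acts: by rules (1)--(3) of \ref{subsubsection: the bracket on tauQ} one has $\sigma(\beta) = 0$ for $\beta\in\mathcal{O}_{X^\sharp}[k]$, $\sigma(1\otimes q) = L_{\pi(q)}$ and $\sigma(\epsilon\otimes q) = \iota_{\pi(q)}$, and $\sigma$ is $\mathcal{O}_{X^\sharp}$-linear. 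As an $\mathcal{O}_{X^\sharp}$-module $M$ is generated by $1$ together with the $1\otimes q$ and $\epsilon\otimes q$ ($q\in\mathcal{Q}$), while $K$ is the $\mathcal{O}_{X^\sharp}$-submodule generated by the five displayed families.

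The key step is the observation that every generator of $K$ is annihilated by $\sigma$. For the families (1)--(3) this is immediate from $\pi\circ\pi^\dagger = 0$ (equation \eqref{complex}): then $\sigma(\epsilon\otimes\pi^\dagger(\alpha)) = \iota_{\pi(\pi^\dagger\alpha)} = 0$ and $\sigma(1\otimes\pi^\dagger(\alpha)) = L_{\pi(\pi^\dagger\alpha)} = 0$, while the remaining summand lies in $\mathcal{O}_{X^\sharp}[k]$, where $\sigma$ vanishes. Family (4) is killed by $\mathcal{O}_X$-linearity of $\pi$, and for family (5) the three terms of the generator match the Cartan identity $L_{f\pi(q)} = fL_{\pi(q)} + df\wedge\iota_{\pi(q)}$ exactly, so their symbols cancel.

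Granting that $\sigma$ kills the generators of $K$, the Leibniz rule disposes of all scalar factors. For $[M,K]\subseteq K$: given $m\in M$ and a generator $g$, the Leibniz rule in the right argument gives $[m,\gamma g] = \sigma(m)(\gamma)\cdot g \pm \gamma[m,g]$, whose first summand is an $\mathcal{O}_{X^\sharp}$-multiple of $g$, hence in $K$; expanding $m$ over the module generators $h$ of $M$ and applying the Leibniz rule on the left yields $[\delta h, g] = \pm\delta[h,g] + (\sigma(g)\delta)\cdot h$, and the anchor term vanishes because $\sigma(g) = 0$. The inclusion $[K,M]\subseteq K$ is handled symmetrically, the vanishing of $\sigma(g)$ again disposing of the anchor term produced when a scalar is pulled past $g$. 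Thus everything comes down to verifying $[h,g]\in K$ and $[g,h]\in K$ for $h\in\{1,\,1\otimes q,\,\epsilon\otimes q\}$ and $g$ ranging over the generators of families (1), (2), (4), (5) (family (3) being an $\mathcal{O}_{X^\sharp}$-multiple of (1)).

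These remaining brackets are computed directly from rules (1)--(6) and \eqref{bracket with scalars-1}--\eqref{bracket with scalars-6}, each closing back into $K$ through a Courant axiom. For example $[1\otimes q,\,\alpha-\epsilon\otimes\pi^\dagger(\alpha)] = L_{\pi(q)}\alpha - \epsilon\otimes\pi^\dagger(L_{\pi(q)}\alpha)$ by \eqref{forms left ideal}, again a generator of type (1); $[\epsilon\otimes q,\,\alpha-\epsilon\otimes\pi^\dagger(\alpha)] = \iota_{\pi(q)}\alpha - \langle q,\pi^\dagger(\alpha)\rangle = 0$ by \eqref{adjunction}; and $[\epsilon\otimes q,\,d\alpha-1\otimes\pi^\dagger(\alpha)]$ collapses, via \eqref{adjunction}, \eqref{forms left ideal} and Cartan's formula $L = d\iota + \iota d$, to a generator of type (1). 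I expect the genuine labor to lie in families (4) and (5), where the brackets simultaneously involve the pairing $\langle\ ,\ \rangle$, the Leibniz bracket $\{\ ,\ \}$, the de Rham differential and factors of $df$; closing them into $K$ requires the Leibniz rule \eqref{leibniz} together with \eqref{ip invariance}, \eqref{symmetrizer}, and careful tracking of the signs in \eqref{bracket with scalars-1}--\eqref{bracket with scalars-5}. This is the lengthy but mechanical verification, of exactly the kind the authors relegate to Section \ref{section: proofs}.
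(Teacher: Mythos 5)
Your proof is correct, and it is organized differently from the one in \ref{subsection: two-sided ideal}. The paper brackets a general homogeneous element $\omega+\alpha\epsilon\otimes q+\beta\otimes r$ directly against a scalar-dressed generator of $K$ and grinds through \eqref{bracket with scalars-1}--\eqref{bracket with scalars-6}, writing out only the family-(5) case and leaving the remaining families and the inclusion $[K,M]\subset K$ to the reader. You instead isolate the observation that every generator of $K$ has vanishing symbol --- families (1)--(3) by \eqref{complex}, family (4) by $\mathcal{O}_X$-linearity of $\pi$, family (5) by Cartan's formula --- so that the Leibniz rules in each argument strip all $\mathcal{O}_{X^\sharp}$-factors without producing extraneous anchor terms, reducing both inclusions to brackets of bare generators computed from rules (1)--(6) of \ref{subsubsection: the bracket on tauQ}. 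This is a genuine structural improvement: it explains \emph{why} the scalar bookkeeping in the paper's computation works out, eliminates family (3) as redundant, and makes the sample verifications (your three displayed brackets, each closing via \eqref{forms left ideal}, \eqref{adjunction} and Cartan, are all correct) essentially one-line. The price is the same as in the paper: the brackets against generators of families (4) and (5), which involve \eqref{leibniz}, \eqref{ip invariance} and \eqref{symmetrizer}, are still deferred as mechanical, so the residual burden of verification is comparable to what the authors themselves leave to the reader.
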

\begin{proof}
The proof is given in \ref{subsection: two-sided ideal}
\end{proof}

Lemma \ref{lemma: two-sided ideal} implies that the operation \eqref{bracket no scalars} descends to the operation
\begin{equation}\label{bracket on tau}
[\ ,\ ] \colon \tau\mathcal{Q}\otimes_\mathbb{C}\tau\mathcal{Q}
\to \tau\mathcal{Q} \ .
\end{equation}

\subsubsection{The anchor map and the marking of $\tau\mathcal{Q}$.}
The composition
\begin{multline*}
\pr_*\ev^*\Omega^{k-1}
\xrightarrow{(\int,-\pr_*\ev^*(\pi^\dagger))}
\mathcal{O}_{X^\sharp}[k]\oplus \pr_*\ev^*\mathcal{Q} \to
\\
\pr_*\ev^*\mathcal{Q} \xrightarrow{\pr_*\ev^*(\pi)}
\pr_*\ev^*\mathcal{T}_X = \mathcal{T}_{X^\sharp}
\end{multline*}
is equal to zero, hence factors through $\tau\mathcal{Q}$. Let
\begin{equation}\label{anchor on tau}
\sigma \colon \tau\mathcal{Q} \to \mathcal{T}_{X^\sharp}
\end{equation}
denote the induced map of $\mathcal{O}_{X^{\sharp}}$-modules.

\begin{lemma}
The map \eqref{anchor on tau} preserves the brackets, i.e.
\[
\sigma([a,b]) = [\sigma(a), \sigma(b)],
\]
for any $a,b\in\tau\mathcal{Q}$.
\end{lemma}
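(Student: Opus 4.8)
The plan is to verify the identity on the ambient module before passing to the quotient. Write $M := \mathcal{O}_{X^\sharp}[k]\oplus \mathcal{O}_{X^\sharp}[\epsilon]\otimes_\mathbb{C}\mathcal{Q}$ with the operation \eqref{bracket no scalars}. Both the bracket \eqref{bracket on tau} and the anchor \eqref{anchor on tau} are induced from structures on $M$: the former by Lemma \ref{lemma: two-sided ideal}, the latter from the $\mathcal{O}_{X^\sharp}$-linear map $\widetilde{\sigma}\colon M \to \mathcal{T}_{X^\sharp}$ that vanishes on $\mathcal{O}_{X^\sharp}[k]$ and on the remaining summand is $\pr_*\ev^*(\pi)$ followed by the isomorphism $\pr_*\ev^*\mathcal{T}_X\cong\mathcal{T}_{X^\sharp}$. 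Unwinding that isomorphism, $\widetilde{\sigma}(\alpha\otimes q) = \alpha\cdot L_{\pi(q)}$ and $\widetilde{\sigma}(\alpha\epsilon\otimes q) = \alpha\cdot\iota_{\pi(q)}$ for $\alpha\in\mathcal{O}_{X^\sharp}$, $q\in\mathcal{Q}$. Since a bracket in $\tau\mathcal{Q}$ is computed by lifting its arguments to $M$, bracketing there, and projecting, and since $\sigma$ is induced by $\widetilde\sigma$, it suffices to prove $\widetilde{\sigma}([a,b]) = [\widetilde{\sigma}(a),\widetilde{\sigma}(b)]$ for all $a,b\in M$, the right-hand bracket being the graded commutator of vector fields on $X^\sharp$.

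By $\mathbb{C}$-bilinearity it is then enough to treat pairs of generators of the three types $\omega\in\mathcal{O}_{X^\sharp}[k]$, $\alpha\epsilon\otimes q$ and $\beta\otimes r$, for which the bracket is given by the explicit formulas \eqref{bracket with scalars-1}--\eqref{bracket with scalars-6} together with $[\mathcal{O}_{X^\sharp}[k],\mathcal{O}_{X^\sharp}[k]]=0$. I would apply $\widetilde{\sigma}$ to the right-hand side of each formula. Every term there is either of $\mathcal{O}_{X^\sharp}[k]$-type --- precisely the contributions $\langle q_1,q_2\rangle$ and $d\langle q_1,q_2\rangle$ built from the pairing, which $\widetilde{\sigma}$ annihilates --- or of $1\otimes(\ )$- or $\epsilon\otimes(\ )$-type, which $\widetilde{\sigma}$ sends to a Lie-derivative or interior-product vector field. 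These are then matched against $[\widetilde{\sigma}(a),\widetilde{\sigma}(b)]$, expanded by the $\mathcal{O}_{X^\sharp}$-linear derivation rule for vector fields, using the three Cartan identities $[L_\xi,L_\eta]=L_{[\xi,\eta]}$, $[L_\xi,\iota_\eta]=\iota_{[\xi,\eta]}$, $[\iota_\xi,\iota_\eta]=0$ on $\mathcal{T}_{X^\sharp}$, together with the fact that the Courant anchor $\pi$ is a morphism of Leibniz algebras, so $\pi(\{q_1,q_2\})=[\pi(q_1),\pi(q_2)]$. Thus \eqref{bracket with scalars-4} corresponds to the first Cartan identity, \eqref{bracket with scalars-2} and \eqref{bracket with scalars-3} to the second, and \eqref{bracket with scalars-1} to the third, while pairs involving an $\mathcal{O}_{X^\sharp}[k]$-factor give $0=0$ since $\widetilde\sigma$ kills that summand.

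The hard part will be purely the bookkeeping of signs and $\mathcal{O}_{X^\sharp}$-coefficients in expanding the graded commutator $[\alpha_1 D_1,\alpha_2 D_2]$ (with $D_i\in\{L_{\pi(q_i)},\iota_{\pi(q_i)}\}$): one must check that its coefficient terms $\alpha_1(D_1\alpha_2)D_2$ and $\pm\alpha_2(D_2\alpha_1)D_1$ reproduce exactly the $\widetilde\sigma$-images of the anchor-weighted terms in \eqref{bracket with scalars-1}--\eqref{bracket with scalars-3}, and that the pure-bracket term $\propto\alpha_2\wedge\alpha_1[D_1,D_2]$ reproduces the $\widetilde\sigma$-image of the $\{q_1,q_2\}$-term. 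The structural reason the verification closes is that the terms valued in $\mathcal{O}_{X^\sharp}[k]$ --- the pairing and its de Rham differential --- are precisely the ones annihilated by the anchor, so they drop out and play no role. Since the manipulation is entirely parallel to the proof that the transgressed anchor of \ref{subsection: Transgression for Lie algebroids} preserves brackets, restricted to generators, no new phenomenon arises beyond this sign accounting.
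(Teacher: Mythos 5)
The paper offers no argument here (``Left to the reader''), so there is nothing to compare against; your proposal is the natural verification and its strategy is sound. Lifting to $\mathcal{O}_{X^\sharp}[k]\oplus\mathcal{O}_{X^\sharp}[\epsilon]\otimes_\mathbb{C}\mathcal{Q}$ is legitimate because $\widetilde{\sigma}$ kills all generators of $K$ (for types (1)--(3) by $\pi\circ\pi^\dagger=0$, for types (4)--(5) by $\iota_{f\xi}=f\iota_\xi$ and $L_{f\xi}=fL_\xi+df\wedge\iota_\xi$), so $\sigma\circ p=\widetilde{\sigma}$ and the identity on $\tau\mathcal{Q}$ follows from the one on the ambient module; the generator-by-generator matching against the Cartan relations $[L_\xi,L_\eta]=L_{[\xi,\eta]}$, $[L_\xi,\iota_\eta]=\iota_{[\xi,\eta]}$, $[\iota_\xi,\iota_\eta]=0$, together with $\pi(\{q_1,q_2\})=[\pi(q_1),\pi(q_2)]$ and the vanishing of $\widetilde{\sigma}$ on the $\langle\ ,\ \rangle$-terms, checks out.
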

\begin{proof}
Left to the reader.
\end{proof}

Let $\mathfrak{c}\in\tau\mathcal{Q}^{-k}$ denote the image of
$1\in\Gamma(X;(\mathcal{O}_{X^\sharp}[k])^{-k})$ under the
composition
\[
\mathcal{O}_{X^\sharp}[k] \xrightarrow{(\id,0)} \mathcal{O}_{X^\sharp}[k]\oplus
\pr_*\ev^*\mathcal{Q} \to \tau\mathcal{Q}
\]
(equal to the bottom horizontal map in \eqref{defn tauQ coCart square}).

\begin{proposition}\label{tau as a marked Lie}
The operation \eqref{bracket on tau}, the anchor map
\eqref{anchor on tau} and the marking $\mathfrak{c}$ endow the
$\mathcal{O}_{X^\sharp}$-module $\tau\mathcal{Q}$ with a structure
of a marked $\mathcal{O}_{X^\sharp}$-Lie algebroid.
\end{proposition}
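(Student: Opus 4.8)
The plan is to verify, axiom by axiom, that $(\tau\mathcal{Q},\mathfrak{c})$ satisfies Definition \ref{defn: marked Lie algd}. Several ingredients are already available: the bracket \eqref{bracket on tau} exists as the descent of \eqref{bracket no scalars} (Lemma \ref{lemma: two-sided ideal}), the differential $\partial$ is a graded derivation of it (Lemma \ref{lemma: bracket Leibniz}), the $\mathcal{O}_{X^\sharp}$-linear anchor \eqref{anchor on tau} preserves the bracket (the lemma immediately preceding), and the Leibniz rule $[a,f b]=\sigma(a)(f)\,b+f[a,b]$ holds essentially by construction, since \eqref{bracket no scalars} was \emph{defined} as the Leibniz extension of the seed operation on generators --- the formulas \eqref{bracket with scalars-1}--\eqref{bracket with scalars-6} exhibiting this extension explicitly --- and the property survives passage to the quotient. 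It will therefore remain to establish (a) graded skew-symmetry, (b) the graded Jacobi identity, and (c) that $\mathfrak{c}$ is a homogeneous central section.

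For (a) I would argue that, since every element is a finite $\mathbb{C}$-sum of the three basic forms $\omega\in\mathcal{O}_{X^\sharp}[k]$, $\alpha\epsilon\otimes q$ and $\beta\otimes r$, bilinearity reduces the check to these forms, computed throughout modulo $K$ by means of \eqref{bracket with scalars-1}--\eqref{bracket with scalars-6}. The cases involving $\mathcal{O}_{X^\sharp}[k]$ are built into the defining relations; the identity $[\epsilon\otimes q_1,\epsilon\otimes q_2]=\langle q_1,q_2\rangle$ is graded skew precisely because $\epsilon\otimes q$ is odd (the pairing has degree $-2$) while $\ip$ is symmetric. The one substantive comparison is $[1\otimes q_1,\epsilon\otimes q_2]$ against $[\epsilon\otimes q_2,1\otimes q_1]$: as $1\otimes q$ is even and $\epsilon\otimes q$ is odd, skew-symmetry will amount to $\epsilon\otimes(\{q_1,q_2\}+\{q_2,q_1\})\equiv d\langle q_1,q_2\rangle$ modulo $K$, which is exactly the Courant axiom \eqref{symmetrizer} together with the relation $\alpha-\epsilon\otimes\pi^\dagger(\alpha)\in K$ applied to $\alpha=d\langle q_1,q_2\rangle$. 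This is the precise point at which the non-skew Leibniz bracket of $\mathcal{Q}$ becomes antisymmetric after descent to $\tau\mathcal{Q}$.

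For (c), the marking $\mathfrak{c}$ is the image of the constant $1$, hence homogeneous, and $[\mathfrak{c},-]$ will vanish on each basic generator: one has $[\mathcal{O}_{X^\sharp}[k],\mathcal{O}_{X^\sharp}[k]]=0$, while $\iota_{\pi(q)}1=0$ and $L_{\pi(q)}1=0$ annihilate the brackets with $\epsilon\otimes q$ and $1\otimes q$. Since $\sigma(\mathfrak{c})=0$ by construction of \eqref{anchor on tau}, the Leibniz rule propagates this vanishing to all $\mathcal{O}_{X^\sharp}$-multiples, so that $[\mathfrak{c},\tau\mathcal{Q}]=0$; graded skew-symmetry then gives $[\tau\mathcal{Q},\mathfrak{c}]=0$ as well.

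The hard part will be (b). By $\mathbb{C}$-trilinearity of the Jacobiator it again suffices to evaluate it on triples of the basic forms (with arbitrary form coefficients), modulo $K$, using \eqref{bracket with scalars-1}--\eqref{bracket with scalars-6}. I expect each configuration to collapse onto a single Courant axiom: the all-$\epsilon\otimes q$ triple onto \eqref{ip jacobi} via $[\epsilon\otimes q,\langle q_1,q_2\rangle]=\iota_{\pi(q)}\langle q_1,q_2\rangle$; the all-$1\otimes q$ triple onto the left Leibniz identity of the bracket $\{\ ,\ \}$ (cf. \eqref{leibniz}), since $[1\otimes q_1,1\otimes q_2]=1\otimes\{q_1,q_2\}$; and the mixed triples onto \eqref{ip invariance}, \eqref{forms left ideal}, \eqref{adjunction} and \eqref{complex}. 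The computation is long but mechanical, of the type deferred to Section \ref{section: proofs}; the only genuine difficulty is the bookkeeping of Koszul signs and the reminder that several of these identities hold only after reduction modulo $K$.
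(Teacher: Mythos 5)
Your plan matches the paper's own proof essentially step for step: the paper's verification (Section \ref{subsection: tau as a marked Lie}) likewise reduces skew-symmetry to the generators, with the substantive case being $[\alpha_1\otimes q_1,\alpha_2\epsilon\otimes q_2]$ versus its transpose, which vanishes modulo $K$ exactly via \eqref{symmetrizer} and the relation $\alpha-\epsilon\otimes\pi^\dagger(\alpha)\in K$ as you say, and then checks the Jacobi identity on the same list of basic triples (eight configurations), several of which indeed close up only after invoking a defining relation of $K$ or a Courant axiom such as \eqref{ip invariance}. The only divergences are cosmetic: you treat the Leibniz rule as automatic from the definition of \eqref{bracket no scalars} as a Leibniz extension, whereas the paper still verifies it explicitly on the formulas \eqref{bracket with scalars-1}--\eqref{bracket with scalars-6} as a consistency check, and you add the (easy, but logically required) verification that $\mathfrak{c}$ is central, which the paper leaves implicit.
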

\begin{proof}
The proof is given in \ref{subsection: tau as a marked Lie}.
\end{proof}

Since the diagram \eqref{defn tauQ coCart square} is commutative, the (right vertical) map $\pr_*\ev^*\mathcal{Q} \to \tau\mathcal{Q}$ induces the map
\begin{equation}\label{coker to coker}
\coker(\pr_*\ev^*(\pi^\dagger)) \to \coker(\mathcal{O}_{X^\sharp}[k] \xrightarrow{\cdot\mathfrak{c}} \tau\mathcal{Q}) = \overline{\tau\mathcal{Q}} .
\end{equation}

\begin{proposition}\label{prop: properties of transgression}
The marked $\mathcal{O}_{X^\sharp}$-Lie algebroid $\tau\mathcal{Q}$ enjoys the following properties:
\begin{enumerate}
\item $\coker(\pr_*\ev^*(\pi^\dagger)) = \overline{\mathcal{Q}}^\sharp$

\item the map \eqref{coker to coker} is an isomorphism, in particular $\overline{\mathcal{Q}}^\sharp \cong \overline{\tau\mathcal{Q}}$ and
$\overline{\tau\mathcal{Q}}^i = 0$ for $i \leqslant -2$;

\item the map $\Omega^{i+k}_X \to \tau\mathcal{Q}^i$ induced by the map
$\mathcal{O}_{X^\sharp}[k] \xrightarrow{\cdot\mathfrak{c}} \tau\mathcal{Q}$ is an isomorphism for $i \leqslant -2$;

\item if $\mathcal{Q}$ is a $\Omega^{k-1}_X$-extension of
$\mathcal{A}$ then $\tau\mathcal{Q}$ is a
$\mathcal{O}_{X^\sharp}[k]$-extension of $\mathcal{A}^\sharp$.
\end{enumerate}
\end{proposition}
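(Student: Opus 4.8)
All four statements rest on a single structural observation: the functor $\pr_*\ev^*$ is exact and lands in complexes concentrated in degrees $\geqslant -1$. Exactness holds because $\ev^*\mathcal{E} = \mathcal{O}_{X^\sharp}[\epsilon]\otimes_{\mathcal{O}_X}\mathcal{E}$ with $\mathcal{O}_{X^\sharp}[\epsilon] = \Omega^\bullet_X\otimes_\mathbb{C}\mathbb{C}[\epsilon]$ locally free, hence flat, over $\mathcal{O}_X$, while $\pr_*$ is mere restriction of scalars; the degree bound holds since $\epsilon$ has degree $-1$ and $\Omega^\bullet_X$ sits in degrees $\geqslant 0$. For (1) this at once gives $\coker(\pr_*\ev^*(\pi^\dagger)) = \pr_*\ev^*\coker(\pi^\dagger) = \pr_*\ev^*\overline{\mathcal{Q}}$ as $\mathcal{O}_{X^\sharp}$-modules, and by definition the underlying module of the transgressed Lie algebroid $\overline{\mathcal{Q}}^\sharp$ is precisely $\pr_*\ev^*\overline{\mathcal{Q}}$.

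For (2) I would compute $\overline{\tau\mathcal{Q}}$ directly from the cocartesian square \eqref{defn tauQ coCart square}. The marking $\mathfrak{c}$ is the image of $1$ under $\mathcal{O}_{X^\sharp}[k] \xrightarrow{(\id,0)} \mathcal{O}_{X^\sharp}[k]\oplus\pr_*\ev^*\mathcal{Q} \to \tau\mathcal{Q}$, so $\overline{\tau\mathcal{Q}}$ is the quotient of $\mathcal{O}_{X^\sharp}[k]\oplus\pr_*\ev^*\mathcal{Q}$ by $\im(\int,-\pr_*\ev^*(\pi^\dagger)) + (\mathcal{O}_{X^\sharp}[k]\oplus 0)$. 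Modulo the first summand the relation $(\int\eta,\, -\pi^\dagger\eta)$ collapses to $\pi^\dagger\eta \equiv 0$, whence $\overline{\tau\mathcal{Q}} = \coker(\pr_*\ev^*(\pi^\dagger))$ and the comparison map \eqref{coker to coker} is exactly this identification; together with (1) this yields $\overline{\mathcal{Q}}^\sharp \cong \overline{\tau\mathcal{Q}}$, and $\overline{\tau\mathcal{Q}}^i = 0$ for $i\leqslant -2$ since $\pr_*\ev^*\overline{\mathcal{Q}}$ lives in degrees $\geqslant -1$. What remains is to see that the identifications in (1) and (2) are of Lie algebroids, not merely of modules. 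This is where the explicit formulas \eqref{bracket with scalars-1}--\eqref{bracket with scalars-6} enter: since $\mathfrak{c}$ is central (Proposition \ref{tau as a marked Lie}), every term valued in a pairing $\langle q_1,q_2\rangle$ lies in $\mathcal{O}_{X^\sharp}[k]\cdot\mathfrak{c}$ and so vanishes in $\overline{\tau\mathcal{Q}}$, while $\{q_1,q_2\}$ descends to the associated Lie bracket on $\overline{\mathcal{Q}}$; the surviving expressions are then verbatim the transgression brackets of Subsection \ref{subsection: Transgression for Lie algebroids}. I expect this bracket comparison to be the main bookkeeping obstacle.

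Property (3) is then immediate from the degree bound. For $i\leqslant -2$ both $(\pr_*\ev^*\Omega^{k-1}_X)^i$ and $(\pr_*\ev^*\mathcal{Q})^i$ vanish, so no relations are imposed in degree $i$ and $\tau\mathcal{Q}^i = (\mathcal{O}_{X^\sharp}[k])^i = \Omega^{i+k}_X$, with the map induced by $\cdot\mathfrak{c}$ being the identity of $\Omega^{i+k}_X$.

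For (4), assume $\overline{\mathcal{Q}} \cong \mathcal{A}$ with $0 \to \Omega^{k-1}_X \xrightarrow{\pi^\dagger} \mathcal{Q} \to \mathcal{A} \to 0$ exact. By (2) the sequence $\mathcal{O}_{X^\sharp}[k] \xrightarrow{\cdot\mathfrak{c}} \tau\mathcal{Q} \to \mathcal{A}^\sharp \to 0$ is right exact, so it suffices to prove that $\cdot\mathfrak{c}$ is injective. If $\omega\cdot\mathfrak{c}=0$, then $(\omega,0) = (\int\eta,\, -\pi^\dagger\eta)$ for some $\eta\in\pr_*\ev^*\Omega^{k-1}_X$, forcing $(\pr_*\ev^*(\pi^\dagger))(\eta)=0$; since $\mathcal{Q}$ is a Courant extension, $\pi^\dagger$ is injective by Lemma \ref{lemma: transitive injective coanchor}, hence so is $\pr_*\ev^*(\pi^\dagger)$ by exactness, giving $\eta=0$ and then $\omega=0$. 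This injectivity, which rests on the transitive-case lemma together with the flatness of $\mathcal{O}_{X^\sharp}[\epsilon]$ over $\mathcal{O}_X$, is the one genuinely non-formal input for (4).
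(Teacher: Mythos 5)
Your proof is correct and follows essentially the same route as the paper's: exactness of $\pr_*\ev^*$ for (1), the cocartesian square \eqref{defn tauQ coCart square} for (2), the degree bound on $\pr_*\ev^*$ for (3), and the push-out of the transgressed short exact sequence along $\int$ for (4) (which you merely unwind into an explicit injectivity check). The only quibble is that the injectivity of $\pi^\dagger$ invoked in (4) is already part of the hypothesis that $\mathcal{Q}$ is an $\Omega^{k-1}_X$-extension of $\mathcal{A}$ (Definition \ref{def: courant extension}), rather than a consequence of Lemma \ref{lemma: transitive injective coanchor}, which concerns transitive Courant algebroids.
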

\begin{proof}
The map \eqref{coker to coker} is an isomorphism since the square
\eqref{defn tauQ coCart square} is cocartesian. Since the functor
$\pr_*\ev^*$ is exact, it follows that
$\coker(\pr_*\ev^*(\pi^\dagger)) = \overline{\mathcal{Q}}^\sharp$.
We leave it to the reader to check that the map
$\overline{\mathcal{Q}}^\sharp \to \overline{\tau\mathcal{Q}}$ is
a morphism of $\mathcal{O}_{X^\sharp}$-Lie algebroids.

The second claim is a direct consequence of the construction of $\tau\mathcal{Q}$.

If the sequence \eqref{CExt transitive exact sequence} is exact
then so is the sequence obtained from \eqref{CExt transitive exact
sequence} by applying the exact functor $\pr_*\ev^*$. The latter
short exact sequence gives rise, by push-out along the map
$\int\colon \pr_*\ev^*\Omega_X^{k-1} \to
\mathcal{O}_{X^\sharp}[k]$ (see \eqref{defn tauQ coCart square}),
to the short exact sequence
\[
0 \to \mathcal{O}_{X^\sharp}[k] \xrightarrow{\cdot\mathfrak{c}} \tau\mathcal{Q} \to \mathcal{A}^\sharp \to 0 \ .
\]
\end{proof}

\subsection{The initial CtL morphism}
\begin{lemma}
The composition
\begin{equation}\label{init CtL}
\mathcal{Q}[1] \to \pr_*\ev^*\mathcal{Q} \to \mathcal{O}_{X^\sharp}[k]\oplus
\pr_*\ev^*\mathcal{Q} \to \tau\mathcal{Q}
\end{equation}
defines a CtL morphism.
\end{lemma}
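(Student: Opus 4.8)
The plan is to unwind the composition \eqref{init CtL} into an explicit map and then verify, one square at a time, the commutativity of the four diagrams \eqref{ctl morphism anchor}--\eqref{ctl morphism bracket} required by Definition \ref{def: CtL morphism}. Since $\mathcal{Q}$ sits in degree $-1$ of $\pr_*\ev^*\mathcal{Q}$ (see Remark \ref{remark: prev and jets}), the composition is the $\mathcal{O}_X$-linear map $\phi\colon\mathcal{Q}[1]\to\tau\mathcal{Q}$ sending $q$ to the class of $\epsilon\otimes q$. It is $\mathcal{O}_X$-linear because $\epsilon\otimes fq=f\epsilon\otimes q$ modulo $K$ (one of the generating relations of $K$), and the right-hand side is $f\cdot\phi(q)$ for $f\in\mathcal{O}_X\subset\mathcal{O}_{X^\sharp}$.

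Squares (A) and (B) I expect to be immediate from the construction. For \eqref{ctl morphism anchor}, the anchor \eqref{anchor on tau} is induced by $\pr_*\ev^*(\pi)$, so $\sigma(\phi(q))=\epsilon\otimes\pi(q)=\iota_{\pi(q)}$, which is exactly the image of $\pi(q)$ under the isomorphism $\mathcal{T}_X\cong\mathcal{T}_{X^\sharp}^{-1}$ sending $\xi\mapsto\iota_\xi$. For \eqref{ctl morphism coanchor}, the map $\cdot\mathfrak{c}$ sends $\alpha\in\Omega^{k-1}_X=(\mathcal{O}_{X^\sharp}[k])^{-1}$ to the class of $\alpha$, whereas $\phi(\pi^\dagger(\alpha))$ is the class of $\epsilon\otimes\pi^\dagger(\alpha)$; these coincide because $\alpha-\epsilon\otimes\pi^\dagger(\alpha)$ is one of the generators of $K$.

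Squares (C) and (D) follow from the defining formulas for the bracket \eqref{bracket no scalars}. For \eqref{ctl morphism pairing} the relation $[\epsilon\otimes q_1,\epsilon\otimes q_2]=\langle q_1,q_2\rangle$ gives that $[\phi(q_1),\phi(q_2)]^{-1,-1}$ is the class of $\langle q_1,q_2\rangle$, namely $\langle q_1,q_2\rangle\cdot\mathfrak{c}$; here the bracket $[\ ,\ ]^{-1,-1}$ is $\mathcal{O}_X$-bilinear by Remark \ref{remark: bracket in negative degrees}, matching the $\mathcal{O}_X$-bilinear pairing. For \eqref{ctl morphism bracket} I first compute the differential, $\delta\phi(q_1)=\partial(\epsilon\otimes q_1)=1\otimes q_1$ in degree $0$; the relation $[1\otimes q_1,\epsilon\otimes q_2]=\epsilon\otimes\{q_1,q_2\}$ then yields $[\delta\phi(q_1),\phi(q_2)]^{0,-1}=\epsilon\otimes\{q_1,q_2\}=\phi(\{q_1,q_2\})$.

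All four verifications take place after passing to the quotient $\tau\mathcal{Q}$, so at each step one relies on the fact that both \eqref{bracket no scalars} and the differential descend through $K$ (Lemma \ref{lemma: two-sided ideal} and $\partial(K)\subset K$). The point that demands genuine care, and which I would treat as the main obstacle, is square (D): it is the only square mixing the differential with the bracket, so one must pin down $\partial(\epsilon\otimes q)=1\otimes q$ (signs included, via the DGA structure on $\mathcal{O}_{X^\sharp}[\epsilon]$ fixed in Section \ref{subsection: the odd path space}) before invoking the bracket relation. The remaining squares are straightforward bookkeeping against the generators of $K$ and the defining formulas for the anchor and bracket.
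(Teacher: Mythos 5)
Your proof is correct and follows essentially the same route as the paper's: identify the composition as $q\mapsto[\epsilon\otimes q]$ and verify squares (A)--(D) using the anchor's definition, the generator $\alpha-\epsilon\otimes\pi^\dagger(\alpha)$ of $K$, and the bracket relations $[\epsilon\otimes q_1,\epsilon\otimes q_2]=\langle q_1,q_2\rangle$ and $[1\otimes q_1,\epsilon\otimes q_2]=\epsilon\otimes\{q_1,q_2\}$ together with $\partial(\epsilon\otimes q_1)=1\otimes q_1$. Your added remarks on why everything descends through $K$ are a slight elaboration of the paper's argument, not a departure from it.
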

\begin{proof}
We need to show that the squares \eqref{ctl morphism anchor},
\eqref{ctl morphism coanchor}, \eqref{ctl morphism pairing} and
\eqref{ctl morphism bracket} in Definition \ref{def: CtL morphism}
commute.

Square \eqref{ctl morphism anchor} commutes since the two
compositions $q\mapsto\epsilon\otimes q\mapsto\iota_{\pi(q)}$ and
$q\mapsto\pi(q)\mapsto\iota_{\pi(q)}$ are equal.

Square \eqref{ctl morphism coanchor} commutes since the two
compositions are $\alpha \mapsto \alpha\cdot\mathfrak{c}$ and
$\alpha \mapsto \epsilon\otimes\pi^{\dagger}(\alpha)$ and
$\alpha\cdot\mathfrak{c} = \epsilon\otimes\pi^{\dagger}(\alpha)$
in $\tau\mathcal{Q}$.

Square \eqref{ctl morphism pairing} commutes since
$[\epsilon\otimes q_1,\epsilon\otimes q_2] = \langle
q_1,q_2\rangle\cdot\mathfrak{c}$ by definition of the bracket on
$\tau\mathcal{Q}$ (see \ref{subsubsection: the bracket on tauQ}).

Square \eqref{ctl morphism bracket} commutes since
$\delta(\epsilon\otimes q_1) = 1\otimes q_1$ and $[1\otimes
q_1,\epsilon\otimes q_2] = \epsilon\otimes\{q_1,q_2\}$ by
definition of the bracket on $\tau\mathcal{Q}$ (see
\ref{subsubsection: the bracket on tauQ}).
\end{proof}

\begin{proposition} The object $(\mathcal{Q} \to (\tau\mathcal{Q},\mathfrak{c}))\in\mathcal{Q}/\mathtt{CtL}$ corresponding to the CtL morphism \eqref{init CtL} is initial.
\end{proposition}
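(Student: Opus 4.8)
The plan is to prove that the CtL morphism \eqref{init CtL} is initial by producing, for each object $(\mathcal{Q}\xrightarrow{\phi}(\mathcal{A},\mathfrak{c}_{\mathcal A}))$ of the fiber $\mathtt{CtL}(X)_k/\mathcal{Q}$, a unique morphism to it in this fiber. Such a morphism is a morphism of marked Lie algebroids $\Psi\colon(\tau\mathcal{Q},\mathfrak{c})\to(\mathcal{A},\mathfrak{c}_{\mathcal A})$ whose composite with \eqref{init CtL} is $\phi$; thus $\Psi$ is $\mathcal{O}_{X^\sharp}$-linear, commutes with the internal differentials, preserves anchors and brackets, and satisfies $\Psi(\mathfrak{c})=\mathfrak{c}_{\mathcal A}$ and $\Psi(\epsilon\otimes q)=\phi(q)$. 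Since $\partial(\epsilon\otimes q)=1\otimes q$ in $\tau\mathcal{Q}$ and $\Psi$ is a chain map, necessarily $\Psi(1\otimes q)=\delta\phi(q)$, where $\delta$ denotes the differential of $\mathcal{A}$. As $\tau\mathcal{Q}$ is generated as an $\mathcal{O}_{X^\sharp}$-module by $\mathfrak{c}$ together with the classes of $\epsilon\otimes q$ and $1\otimes q$, these values determine $\Psi$ completely, which gives uniqueness. The substance is existence.

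For existence I would use the cocartesian square \eqref{defn tauQ coCart square}: a map out of $\tau\mathcal{Q}$ is the same as a pair of $\mathcal{O}_{X^\sharp}$-linear maps out of $\mathcal{O}_{X^\sharp}[k]$ and $\pr_*\ev^*\mathcal{Q}$ that agree after restriction along $\int$ and $\pr_*\ev^*(\pi^\dagger)$. I take the first map to be $\cdot\,\mathfrak{c}_{\mathcal A}$ and the second to be the $\mathcal{O}_{X^\sharp}$-linear extension of $\epsilon\otimes q\mapsto\phi(q)$, $1\otimes q\mapsto\delta\phi(q)$; the second is well defined on $\pr_*\ev^*\mathcal{Q}$ because the generators (4) and (5) of $K$ are respected, using only $\mathcal{O}_X$-linearity of $\phi$ and the Leibniz rule for $\delta$. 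Agreement on $\pr_*\ev^*\Omega_X^{k-1}$ amounts to the generators (1)--(3): on $\epsilon\otimes\alpha$ one needs $\alpha\cdot\mathfrak{c}_{\mathcal A}=\phi(\pi^\dagger\alpha)$, which is axiom \eqref{ctl morphism coanchor}; on $1\otimes\alpha$ one needs $d\alpha\cdot\mathfrak{c}_{\mathcal A}=\delta\phi(\pi^\dagger\alpha)=\delta(\alpha\cdot\mathfrak{c}_{\mathcal A})$, which holds provided $\mathfrak{c}_{\mathcal A}$ is a cocycle. The resulting $\Psi\colon\tau\mathcal{Q}\to\mathcal{A}$ is a chain map by construction and preserves the marking, since $\Psi(\mathfrak{c})=1\cdot\mathfrak{c}_{\mathcal A}=\mathfrak{c}_{\mathcal A}$.

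It remains to check that $\Psi$ is a morphism of Lie algebroids. Anchor compatibility $\sigma_{\mathcal A}\circ\Psi=\sigma_{\tau\mathcal Q}$ is verified on the generators $\epsilon\otimes q$ and $1\otimes q$ using axiom \eqref{ctl morphism anchor} and the fact that anchors commute with differentials, so that $\sigma_{\mathcal A}(\delta\phi(q))=[\partial,\iota_{\pi(q)}]=L_{\pi(q)}$, while $\sigma(\mathfrak{c})=0$ by Lemma \ref{lemma: marking in ker of anchor}. For the bracket it then suffices, by the Leibniz rule and this anchor compatibility, to verify $\Psi[a,b]=[\Psi a,\Psi b]$ on the $\mathcal{O}_{X^\sharp}$-module generators, which avoids the explicit formulas \eqref{bracket with scalars-1}--\eqref{bracket with scalars-6} entirely; the brackets involving $\mathfrak{c}$ vanish on both sides by centrality, and the remaining cases are exactly axioms \eqref{ctl morphism pairing} and \eqref{ctl morphism bracket}, namely $[\phi(q_1),\phi(q_2)]=\langle q_1,q_2\rangle\cdot\mathfrak{c}_{\mathcal A}$ matching $[\epsilon\otimes q_1,\epsilon\otimes q_2]=\langle q_1,q_2\rangle$, and $[\delta\phi(q_1),\phi(q_2)]=\phi(\{q_1,q_2\})$ matching $[1\otimes q_1,\epsilon\otimes q_2]=\epsilon\otimes\{q_1,q_2\}$; the case $[\epsilon\otimes q_1,1\otimes q_2]$ follows by applying $\delta$ to \eqref{ctl morphism pairing} and combining with \eqref{ctl morphism bracket} and \eqref{symmetrizer}. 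Thus $\Phi=(\id_\mathcal{Q},\Psi)$ is a morphism in the fiber, unique by the first paragraph, so \eqref{init CtL} is initial. The main obstacle is organizational rather than conceptual: reducing the bracket identity to the scalar-free generators, and pinning down the one genuine hypothesis used, that the marking $\mathfrak{c}_{\mathcal A}$ is a cocycle (automatic for $\tau\mathcal{Q}$, and precisely what reconciles relation (2) with $\Psi$).
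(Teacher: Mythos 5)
Your proposal is correct and follows essentially the same route as the paper: the morphism out of $\tau\mathcal{Q}$ is produced from the cocartesian square \eqref{defn tauQ coCart square} with its values forced on the generators $\mathfrak{c}$, $\epsilon\otimes q$ and $1\otimes q$, and you merely fill in the verifications (well-definedness modulo $K$, compatibility with anchor and bracket, uniqueness) that the paper leaves to the reader. Your observation that the construction needs $\delta\mathfrak{c}_{\mathcal{A}}=0$ is apt — the paper's own commuting square for generator (2) of $K$ uses the same implicit hypothesis on markings.
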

\begin{proof}
Suppose that $\mathcal{Q} \xrightarrow{\phi}
(\mathcal{A},\mathfrak{c}_{\mathcal{A}})$ is a CtL morphism. The morphism
$\phi\colon\mathcal{Q}[1] \to \mathcal{A}$ admits a unique
extension to a morphism of $\mathcal{O}_{X^\sharp}$-modules
$\phi \colon \pr_*\ev^*\mathcal{Q} \to \mathcal{A} $ given by the formula
\[
\phi(\omega\otimes q_1+\beta\epsilon\otimes
q_2)=\omega\cdot\delta(\phi(q_1))+\beta\cdot\phi(q_2),
\]
where $\omega,\beta\in\mathcal{O}_{X^{\sharp}}$ and $q_1,q_2\in\mathcal{Q}$. Since the diagram
\[
\begin{CD}
\pr_*\ev^*\Omega^{k-1} @>{\pr_*\ev^*(\pi^\dagger)}>> \pr_*\ev^*\mathcal{Q} \\
@V{\int}VV @VV{\phi}V \\
\mathcal{O}_{X^\sharp}[k] @>{\cdot\mathfrak{c}}>> \mathcal{A}
\end{CD}
\]
is commutative, there exists a unique morphism of $\mathcal{O}_{X^\sharp}$-modules
\begin{equation}\label{tau to lie}
\widetilde{\phi} \colon \tau\mathcal{Q} \to \mathcal{A}
\end{equation}
It is given by
\[
\widetilde{\phi}(\theta+\omega\epsilon\otimes q_1+\beta\otimes
q_2) =
\theta\cdot\mathfrak{c}_{\mathcal{A}}+\omega\cdot\phi(q_1)+\beta\cdot\delta(\phi(q_2)).
\]
We leave it to the reader to verify that \eqref{tau to lie} is a morphism of marked Lie algebroids. Since, clearly, the diagram
\[
\begin{CD}
\mathcal{Q}[1] @= \mathcal{Q}[1] \\
@V{\eqref{init CtL}}VV @VV{\phi}V \\
\tau\mathcal{Q} @>{\widetilde{\phi}}>> \mathcal{A}
\end{CD}
\]
is commutative, \eqref{tau to lie} is a morphism in $\mathcal{Q}/\mathtt{CtL}$.
\end{proof}

\section{Trasgression for extensions}\label{section: transgression for extensions}

\subsection{From Lie to Courant}\label{From Lie to Courant}
Suppose that $(\mathcal{B},\mathfrak{c}) \in \LA{\mathcal{O}_{X^\sharp}}^\star_n$ satisfies
\begin{enumerate}
\item the sequence
\[
0 \to \mathcal{O}[n] \xrightarrow{\cdot\mathfrak{c}} \mathcal{B} \to \overline{\mathcal{B}} \to 0
\]
is exact, i.e.$(\mathcal{B},\mathfrak{c})$ is an $\mathcal{O}[n]$-extension of $\overline{\mathcal{B}}$;

\item $\overline{\mathcal{B}}^i = 0$ for $i \leqslant -2$, i.e.
the map $\mathcal{O}[n] \xrightarrow{\cdot\mathfrak{c}}
\mathcal{B}$ induces isomorphisms $\Omega^{i+n}_X
\xrightarrow{\cong} \mathcal{B}^i$ for $i \leqslant -2$.
\end{enumerate}

Let $\Cour(\mathcal{B},\mathfrak{c})$ denote the $\mathcal{O}_X$-module $\mathcal{B}^{-1}$ equipped with the following structures:
\begin{enumerate}
\item the map $\pi \colon \mathcal{B}^{-1} \to
\mathcal{T}_{\mathcal{O}_{X^\sharp}}^{-1} = \mathcal{T}_X$ is the
restriction of the anchor map $\mathcal{B} \to
\mathcal{T}_{\mathcal{O}_{X^\sharp}}$;

\item the map $\pi^\dagger \colon \Omega^{n-1} \to
\mathcal{B}^{-1}$ is the restriction of the map $\mathcal{O}[n]
\xrightarrow{\cdot\mathfrak{c}} \mathcal{B}$;

\item the pairing $\langle\ ,\ \rangle \colon
\mathcal{B}^{-1}\otimes\mathcal{B}^{-1} \to \Omega^{n-2}$ is
determined by the equation
\[
\langle a, b\rangle\cdot\mathfrak{c} = [a,b]^{-1,-1} \ ;
\]

\item the binary operation $\{\ ,\ \} \colon
\mathcal{B}^{-1}\otimes\mathcal{B}^{-1} \to \mathcal{B}^{-1}$ is
the derived bracket, i.e. is given by the formula
\[
\{a,b\} = [\delta a, b]^{0,-1} \ .
\]
\end{enumerate}

\begin{lemma}
{~}
\begin{enumerate}
\item $\Cour(\mathcal{B},\mathfrak{c})$ is an $(n-1)$-dimensional Courant algebroid.

\item $\overline{\Cour(\mathcal{B},\mathfrak{c})} =
\overline{\mathcal{B}}^{-1}$; in particular, the derived bracket
defines a structure of a Lie algebroid on
$\overline{\mathcal{B}}^{-1}$ and
$\Cour(\mathcal{B},\mathfrak{c})$ is a Courant extension of
$\overline{\mathcal{B}}^{-1}$.

\item The inclusion $\mathcal{B}^{-1}[1] \to \mathcal{B}$ defines
a CtL morphism $\Cour(\mathcal{B},\mathfrak{c}) \to
(\mathcal{B},\mathfrak{c})$.
\end{enumerate}
\end{lemma}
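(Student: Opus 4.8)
The plan is to recognize $\Cour(\mathcal{B},\mathfrak{c})$ as the standard derived-bracket construction and to verify directly the seven axioms of Definition \ref{defn: courant}; parts (2) and (3) will then follow almost formally. Throughout I write $\sigma$ for the anchor of $\mathcal{B}$ and $\delta$ for its differential, so that $\pi=\sigma|_{\mathcal{B}^{-1}}$. The structural facts I will repeatedly invoke are: the graded antisymmetry and Jacobi identity of $[\ ,\ ]$; the graded Leibniz rule $[a,fb]=\sigma(a)(f)b+(-1)^{af}f[a,b]$ together with the $\mathcal{O}_{X^\sharp}$-linearity of $\sigma$; the fact that $\delta$ is a derivation of the bracket; the centrality $[\mathfrak{c},-]=0$, whence $\sigma(\mathfrak{c})=0$ by Lemma \ref{lemma: marking in ker of anchor}; the identity $\delta\mathfrak{c}=0$, which holds because the injection $\cdot\mathfrak{c}$ in the extension of hypothesis (1) is a map of complexes; and two bookkeeping facts, namely $\sigma(q)=\iota_{\pi(q)}$ and $\sigma(\delta q)=\delta\iota_{\pi(q)}=L_{\pi(q)}$ acting on $\mathcal{O}_{X^\sharp}$ (the Cartan relation in $\mathcal{T}_{X^\sharp}$, using that $\sigma$ is a chain map). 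Crucially, hypothesis (2) says $\cdot\mathfrak{c}\colon\Omega^{i+n}_X\to\mathcal{B}^i$ is an isomorphism for $i\leqslant-2$, which is what legitimizes ``dividing by $\mathfrak{c}$'' in the defining equations of $\langle\ ,\ \rangle$.

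First I would dispose of the formal axioms. Symmetry of $\langle\ ,\ \rangle$ and its $\mathcal{O}_X$-bilinearity come from graded antisymmetry of two degree $-1$ elements (so $[q_1,q_2]^{-1,-1}=[q_2,q_1]^{-1,-1}$) together with Remark \ref{remark: bracket in negative degrees}. Axiom \eqref{complex} is $\pi\pi^\dagger(\alpha)=\sigma(\alpha\mathfrak{c})=\alpha\,\sigma(\mathfrak{c})=0$. Axioms \eqref{adjunction}, \eqref{forms left ideal} and \eqref{leibniz} are each a one-line application of the Leibniz rule: for instance $\langle q,\pi^\dagger\alpha\rangle\mathfrak{c}=[q,\alpha\mathfrak{c}]=\sigma(q)(\alpha)\mathfrak{c}=(\iota_{\pi(q)}\alpha)\mathfrak{c}$, and cancelling $\mathfrak{c}$ via the degree $-2$ isomorphism gives \eqref{adjunction}; likewise $[\delta q,\alpha\mathfrak{c}]=(L_{\pi(q)}\alpha)\mathfrak{c}$ and $[\delta q_1,fq_2]=\pi(q_1)(f)q_2+f[\delta q_1,q_2]$ give \eqref{forms left ideal} and \eqref{leibniz}. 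That $\pi$ is a Leibniz morphism follows from $\sigma([\delta q_1,q_2])=[\sigma(\delta q_1),\sigma(q_2)]=[L_{\pi(q_1)},\iota_{\pi(q_2)}]=\iota_{[\pi(q_1),\pi(q_2)]}$.

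The three substantive identities all flow from the graded Jacobi identity applied to a triple in which one then expands $[q_1,q_2]=\langle q_1,q_2\rangle\mathfrak{c}$ by the Leibniz rule, the central term dropping out. The Leibniz (Loday) identity for $\{\ ,\ \}$ is the usual derived-bracket computation, using $\delta[\delta q,q_1]=[\delta q,\delta q_1]$ and Jacobi for the degree-$0$ pair $\delta q,\delta q_1$. The symmetrizer \eqref{symmetrizer} follows from $[\delta q_1,q_2]+[\delta q_2,q_1]=\delta[q_1,q_2]=\delta(\langle q_1,q_2\rangle\mathfrak{c})=(d\langle q_1,q_2\rangle)\mathfrak{c}=\pi^\dagger(d\langle q_1,q_2\rangle)$, where $\delta\mathfrak{c}=0$ is exactly what makes the last step collapse. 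The invariance \eqref{ip invariance} comes from $[[\delta q,q_1],q_2]+[q_1,[\delta q,q_2]]=[\delta q,[q_1,q_2]]=(L_{\pi(q)}\langle q_1,q_2\rangle)\mathfrak{c}$. Finally \eqref{ip jacobi} comes from expanding $[q,[q_1,q_2]]=(\iota_{\pi(q)}\langle q_1,q_2\rangle)\mathfrak{c}$ via Jacobi into $-(\iota_{\pi(q_1)}\langle q,q_2\rangle+\iota_{\pi(q_2)}\langle q,q_1\rangle)\mathfrak{c}$. This last step is where hypothesis (2) is indispensable: the identity lives in $\mathcal{B}^{-3}$, and one cancels $\mathfrak{c}$ using the isomorphism $\Omega^{n-3}_X\cong\mathcal{B}^{-3}$. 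I expect this to be the main obstacle, both because \eqref{ip jacobi} is the ``new'' axiom (redundant in dimension one) and because keeping the Jacobi signs correct across a degree $-3$ computation is the most error-prone part. Collecting these verifications proves (1).

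For (2), I would take the degree $-1$ component of the exact sequence of hypothesis (1); being a short exact sequence of complexes it is exact in each degree, yielding $0\to\Omega^{n-1}_X\xrightarrow{\pi^\dagger}\mathcal{B}^{-1}\to\overline{\mathcal{B}}^{-1}\to0$. Thus $\coker\pi^\dagger=\overline{\mathcal{B}}^{-1}$, which identifies $\overline{\Cour(\mathcal{B},\mathfrak{c})}$ with $\overline{\mathcal{B}}^{-1}$ and exhibits $\Cour(\mathcal{B},\mathfrak{c})$ as a Courant extension of it; the induced Lie algebroid structure on $\overline{\mathcal{B}}^{-1}$ is then the associated Lie algebroid of a Courant algebroid already established. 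For (3), the map $\phi\colon\mathcal{B}^{-1}[1]\to\mathcal{B}$ is the inclusion in degree $-1$, and each of the squares \eqref{ctl morphism anchor}, \eqref{ctl morphism coanchor}, \eqref{ctl morphism pairing}, \eqref{ctl morphism bracket} of Definition \ref{def: CtL morphism} commutes tautologically: $\pi$, $\pi^\dagger$, $\langle\ ,\ \rangle$ and $\{\ ,\ \}$ were defined to be precisely $\sigma|_{\mathcal{B}^{-1}}$, the degree $-1$ restriction of $\cdot\mathfrak{c}$, the component $[\ ,\ ]^{-1,-1}$ read off against $\mathfrak{c}$, and the derived bracket $[\delta(\ ),\ ]^{0,-1}$, which are exactly the data appearing on the Lie side of those squares.
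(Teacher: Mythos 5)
Your proposal is correct and follows the same route the paper intends: the paper's own proof simply leaves the verification of the Courant axioms for $\Cour(\mathcal{B},\mathfrak{c})$ to the reader, notes that exactness in degree $-1$ gives part (2), and observes that part (3) is tautological from the definitions. Your derived-bracket verification (graded Jacobi plus centrality of $\mathfrak{c}$, $\delta\mathfrak{c}=0$, and cancellation of $\mathfrak{c}$ via the isomorphisms $\Omega^{i+n}_X\cong\mathcal{B}^i$ for $i\leqslant-2$) correctly supplies exactly the omitted details, including the degree $-3$ computation for the axiom \eqref{ip jacobi}.
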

\begin{proof}
We leave it to the reader to verify that $\Cour(\mathcal{B},\mathfrak{c})$ is a Courant algebroid.
Note that the sequence
\[
0 \to \Omega^{n-1}_X \to \mathcal{B}^{-1} \to \overline{\mathcal{B}}^{-1} \to 0
\]
is exact by assumption.

The last claim is a direct consequence of the definition of Courant algebroid structure on $\Cour(\mathcal{B},\mathfrak{c})$.
\end{proof}

The assignment $(\mathcal{B},\mathfrak{c}) \mapsto (\Cour(\mathcal{B},\mathfrak{c}) \to (\mathcal{B},\mathfrak{c}))$ extends to a functor
\begin{equation}\label{OExt to CtL}
\OExt{n}(\overline{\mathcal{B}}) \longrightarrow \Cour(\mathcal{B},\mathfrak{c})/\mathtt{CtL} .
\end{equation}
Composing with the forgetful functor $\Cour(\mathcal{B},\mathfrak{c})/\mathtt{CtL} \to \CExt{\overline{\mathcal{B}}^{-1}}{n-1}$ we obtain the functor
\[
\Cour \colon  \OExt{n}(\overline{\mathcal{B}}) \longrightarrow \CExt{\overline{\mathcal{B}}^{-1}}{n-1} .
\]

\subsection{From Courant to Lie}\label{From Courant to Lie}
Suppose that $\mathcal{A}$ is an $\mathcal{O}_X$-Lie algebroid.
According to \ref{subsection: Transgression for Lie algebroids}
$\mathcal{A}$ gives rise to the $\mathcal{O}_{X^\sharp}$-Lie
algebroid $\mathcal{A}^\sharp$ with
$\left(\mathcal{A}^\sharp\right)^{-1} = \mathcal{A}$. Then, any
$\mathcal{B} \in \OExt{n}(\mathcal{A}^\sharp)$ satisfies the
assumptions of \ref{From Lie to Courant}, whence the functor
\begin{equation}\label{cour ext to ext}
\Cour \colon \OExt{n}(\mathcal{A}^\sharp) \to \CExt{\mathcal{A}}{n-1} .
\end{equation}

On the other hand, according to Proposition \ref{prop: properties of transgression}, the transgression functor restricts to the functor
\begin{equation}\label{tau ext to ext}
\tau \colon \CExt{\mathcal{A}}{n-1} \to \OExt{n}(\mathcal{A}^\sharp) .
\end{equation}

\begin{theorem}\label{thm: ext to ext equiv}
The functors \eqref{cour ext to ext} and \eqref{tau ext to ext} are mutually quasi-inverse equivalences of categories.
\end{theorem}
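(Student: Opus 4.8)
The plan is to produce natural isomorphisms $\Cour\circ\tau\cong\id$ and $\tau\circ\Cour\cong\id$. Since, as observed in Sections \ref{section: transgression for courant algebroids} and \ref{section: transgression for lie algebroids}, both $\CExt{\mathcal{A}}{n-1}$ and $\OExt{n}(\mathcal{A}^\sharp)$ are \emph{groupoids}, this suffices to conclude that \eqref{cour ext to ext} and \eqref{tau ext to ext} are mutually quasi-inverse equivalences. The recurring mechanism is that a morphism of Courant extensions of $\mathcal{A}$ (resp. of $\mathcal{O}_{X^\sharp}[n]$-extensions of $\mathcal{A}^\sharp$) inducing the identity on the quotient is automatically invertible; so for each object I must merely produce a \emph{morphism} of extensions compatible with the structure maps and then invoke the five lemma.

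For the direction $\Cour\circ\tau\cong\id$, fix $\mathcal{Q}\in\CExt{\mathcal{A}}{n-1}$ and take the degree $-1$ component of the initial CtL morphism \eqref{init CtL}, an $\mathcal{O}_X$-linear map $\eta_\mathcal{Q}\colon\mathcal{Q}\to(\tau\mathcal{Q})^{-1}=\Cour(\tau\mathcal{Q})$. The Courant data on $\Cour(\tau\mathcal{Q})$ are, by Section \ref{From Lie to Courant}, the restriction of the anchor $\sigma$, the restriction of $\cdot\mathfrak{c}$, the pairing determined by $\langle a,b\rangle\cdot\mathfrak{c}=[a,b]^{-1,-1}$, and the derived bracket $\{a,b\}=[\delta a,b]^{0,-1}$. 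The four squares \eqref{ctl morphism anchor}--\eqref{ctl morphism bracket} of Definition \ref{def: CtL morphism} assert precisely that $\eta_\mathcal{Q}$ intertwines $\pi$, $\pi^\dagger$, $\ip$, $\{\ ,\ \}$ on $\mathcal{Q}$ with these four operations, so $\eta_\mathcal{Q}$ is a morphism of Courant algebroids. By Proposition \ref{prop: properties of transgression}(1),(2) the identifications $\overline{\mathcal{Q}}\cong\mathcal{A}$ and $\overline{\Cour(\tau\mathcal{Q})}=(\overline{\tau\mathcal{Q}})^{-1}\cong\mathcal{A}$ are compatible with $\eta_\mathcal{Q}$; hence $\eta_\mathcal{Q}$ is a morphism in $\CExt{\mathcal{A}}{n-1}$, and therefore an isomorphism.

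For the direction $\tau\circ\Cour\cong\id$, fix $\mathcal{B}\in\OExt{n}(\mathcal{A}^\sharp)$. The CtL morphism $\Cour(\mathcal{B})\to(\mathcal{B},\mathfrak{c})$ of \eqref{OExt to CtL} lies over $\Cour(\mathcal{B})$, so by Theorem \ref{thm: Q/CtL has initial object} the initiality of $\tau\Cour(\mathcal{B})$ in $\Cour(\mathcal{B})/\mathtt{CtL}$ yields a unique morphism of marked Lie algebroids $\epsilon_\mathcal{B}\colon\tau\Cour(\mathcal{B})\to\mathcal{B}$ over $\id_{\Cour(\mathcal{B})}$. Being a marked morphism, $\epsilon_\mathcal{B}$ is $\mathcal{O}_{X^\sharp}$-linear, carries marking to marking, and hence commutes with both maps $\cdot\mathfrak{c}$; it is thus a morphism of $\mathcal{O}_{X^\sharp}[n]$-extensions and restricts to the identity on $\mathcal{O}_{X^\sharp}[n]$. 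It remains to see that the induced map $\overline{\epsilon_\mathcal{B}}\colon\overline{\tau\Cour(\mathcal{B})}\to\overline{\mathcal{B}}=\mathcal{A}^\sharp$ is the identity. In degree $-1$ the commuting triangle from initiality reads $\epsilon_\mathcal{B}^{-1}\circ\eta_{\Cour(\mathcal{B})}=\id_{\mathcal{B}^{-1}}$ (the inclusion $\mathcal{B}^{-1}[1]\to\mathcal{B}$ being the identity in degree $-1$), so with the previous paragraph $\overline{\epsilon_\mathcal{B}}$ is $\id_\mathcal{A}$ in degree $-1$. Since $\overline{\epsilon_\mathcal{B}}$ is an $\mathcal{O}_{X^\sharp}$-linear chain map and $\mathcal{A}^\sharp=\pr_*\ev^*\mathcal{A}$ is generated over $\mathcal{O}_{X^\sharp}$ by its degree $-1$ part together with its differential (Section \ref{subsection: Transgression for Lie algebroids}; note $(\mathcal{A}^\sharp)^i=0$ for $i\leqslant -2$ by Proposition \ref{prop: properties of transgression}(2)), it is the identity. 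The five lemma applied to the two short exact sequences $0\to\mathcal{O}_{X^\sharp}[n]\to\tau\Cour(\mathcal{B})\to\mathcal{A}^\sharp\to0$ and $0\to\mathcal{O}_{X^\sharp}[n]\to\mathcal{B}\to\mathcal{A}^\sharp\to0$, with identities on the outer terms, then shows $\epsilon_\mathcal{B}$ is an isomorphism.

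Naturality of $\eta$ and $\epsilon$ in $\mathcal{Q}$ and $\mathcal{B}$ follows from the functoriality of $\tau$ (Corollary \ref{cor: tau section}) together with the uniqueness clauses in the universal properties defining them: any morphism of extensions induces a commuting square of the initial/universal arrows. The only genuinely laborious point is the bookkeeping in the first direction, where the Courant structure of $\Cour(\tau\mathcal{Q})$ must be read off the bracket formulas of \ref{subsubsection: the bracket on tauQ} and matched against \eqref{ctl morphism anchor}--\eqref{ctl morphism bracket}; this is essentially forced once those formulas are unwound. I expect the main conceptual obstacle to be the assertion in the second direction that $\overline{\epsilon_\mathcal{B}}=\id_{\mathcal{A}^\sharp}$ in \emph{all} degrees: the degree $-1$ case is handled above and degrees $\leqslant -2$ are pinned down by Proposition \ref{prop: properties of transgression}, so the real content is that the generation of $\mathcal{A}^\sharp=\pr_*\ev^*\mathcal{A}$ from degree $-1$ by the differential propagates the identification to non-negative degrees.
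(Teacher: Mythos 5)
Your proposal is correct and follows essentially the same route as the paper: identify $\Cour\circ\tau$ with the identity via the degree $-1$ component of the initial CtL morphism, obtain $\tau\Cour(\mathcal{B},\mathfrak{c})\to\mathcal{B}$ from initiality applied to the CtL morphism $\Cour(\mathcal{B},\mathfrak{c})\to(\mathcal{B},\mathfrak{c})$, check it is a morphism of $\mathcal{O}_{X^\sharp}[n]$-extensions of $\mathcal{A}^\sharp$, and invoke the groupoid property of the extension categories. You merely spell out the verifications (the four CtL squares, and the propagation of $\overline{\epsilon_{\mathcal{B}}}=\id$ from degree $-1$ to all degrees via generation of $\mathcal{A}^\sharp$ over $\mathcal{O}_{X^\sharp}$ by its degree $-1$ part and the differential) that the paper leaves implicit.
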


\begin{proof}
It is clear that $\Cour\circ\tau=\id$.

For $(\mathcal{B},\mathfrak{c}) \in \OExt{n}(\mathcal{A}^\sharp)$
the CtL morphism $\Cour(\mathcal{B},\mathfrak{c}) \to
(\mathcal{B},\mathfrak{c})$ gives rise to the morphism
$\tau\Cour(\mathcal{B},\mathfrak{c}) \to \mathcal{B}$ of
$\mathcal{O}_X$-Lie algebroids and, in fact, of
$\mathcal{O}_X[n]$-extensions of $\mathcal{A}^\sharp$. Therefore,
it is an isomorphism. It is clearly natural in
$(\mathcal{B},\mathfrak{c})$, hence $\tau\circ\Cour\cong id$.
\end{proof}

\subsection{Transgression and symplectic NQ-manifolds of degree 2}
Below we sketch the relationship between the transgression functor and the construction of a NQ-manifold of degree 2 associated to a (1-dimensional) Courant algebroid of \cite{R} and attributed to A.~Weinstein in \cite{S3}.

Suppose that $\mathcal{E}$ is a vector bundle on $X$ and $\ip \colon \mathcal{E}\otimes_{\mathcal{O}_X}\mathcal{E} \to \mathcal{O}_X$ is a non-degenerate symmetric pairing. Let $\mathfrak{so}(\mathcal{E},\ip)$ denote the subsheaf of endomorphisms $\phi\in\shEnd_{\mathcal{O}_X}(\mathcal{E})$ such that $\langle \phi(e), f\rangle + \langle e, \phi(f) \rangle = 0$ for all $e,f \in \mathcal{E}$. Let $\theta \colon \textstyle{\bigwedge}^2\mathcal{E} \to \mathfrak{so}(\mathcal{E},\ip)$ denote the map defined by $\theta(e_1\wedge e_2)(f) = \dfrac12\left(\langle e_2,f\rangle e_1 - \langle e_1,f\rangle e_2\right)$. The map $\theta$ is an isomorphism since the pairing $\ip$ is non-degenerate.

Let $\mathcal{A}_{\mathcal{E},\ip}$ denote the subsheaf of operators $D \in \mathcal{A}_\mathcal{E}$ which satisfy $\langle D(e), f\rangle + \langle e, D(f) \rangle = \sigma(D)(\langle e, f\rangle)$. The sheaf $\mathcal{A}_{\mathcal{E},\ip}$ is a $\mathcal{O}_X$-algebroid. The anchor map $\sigma\colon \mathcal{A}_{\mathcal{E},\ip} \to \mathcal{T}_X$ is surjective with kernel $\mathfrak{so}(\mathcal{E},\ip)$.

Let $I \subset \Symm_{\mathcal{O}_X}(\mathcal{E}[-1]\oplus\mathcal{A}_{\mathcal{E},\ip}[-2])$ denote the ideal generated by the sections of the form $\theta(g) - g \in S_{\mathcal{O}_X}(\mathcal{E}[-1]\oplus\mathcal{A}_{\mathcal{E},\ip}[-2])^2$, $g\in\bigwedge^2\mathcal{E}$. Let
\[
e_3(\mathcal{E},\ip) := S_{\mathcal{O}_X}(\mathcal{E}[-1]\oplus\mathcal{A}_{\mathcal{E},\ip}[-2])/I .
\]
Since the relations are homogeneous $e_3(\mathcal{E},\ip)$ inherits a structure of a graded commutative $\mathcal{O}_X$-algebra. Moreover, $e_3(\mathcal{E},\ip)^0 = \mathcal{O}_X$, $e_3(\mathcal{E},\ip)^1 = \mathcal{E}$, $e_3(\mathcal{E},\ip)^2 = \mathcal{A}_{\mathcal{E},\ip}$.

The bracket on $\mathcal{A}_{\mathcal{E},\ip}$, the action of the latter on $\mathcal{E}$ and the paring $\ip$ extend canonically to a structure of a Lie algebra on $S_{\mathcal{O}_X}(\mathcal{E}[-1]\oplus\mathcal{A}_{\mathcal{E},\ip}[-2])[2]$ so that the bracket is a bi-derivation of the product. Since $\{S_{\mathcal{O}_X}(\mathcal{E}[-1]\oplus\mathcal{A}_{\mathcal{E},\ip}[-2])[2], I\} \subset I$, the bracket descends to $e_3(\mathcal{E},\ip)[2]$. The commutative algebra $e_3(\mathcal{E},\ip)$ equipped with the bracket $\{\ ,\ \}$ is a $e_3$-algebra.

A structure on a Courant algebroid on $\mathcal{E}$ with the symmetric pairing given by $\ip$ gives rise to a derivation of the $e_3$-algebra $e_3(\mathcal{E},\ip)$ of degree one and square zero. As a consequence, there is a canonical map of DGA $\mathcal{O}_{X^\sharp} \to e_3(\mathcal{E},\ip)$ which extends the identity map of $\mathcal{O}_X$ in degree zero.

Suppose that $\mathcal{Q}$ is a Courant algebroid. Thus, $e_3(\mathcal{Q},\ip)$ is a differential $e_3$-algebra equipped with the morphism of DGA $\pi^\dagger \colon \mathcal{O}_{X^\sharp} \to (e_3(\mathcal{Q},\ip)$ \emph{which we assume to be a monomorphism}. Let $F_1 e_3(\mathcal{Q},\ip)[2]$ denote the normalizer of $\im(\pi^\dagger)$. The sheaf $F_1 e_3(\mathcal{Q},\ip)[2]$ has a canonical structure of a $\mathcal{O}_{X^\sharp}$-Lie algebroid marked by $\pi^\dagger(1)$. Note that $\mathcal{Q} = F_1 e_3(\mathcal{Q},\ip)^1 = e_3(\mathcal{Q},\ip)^1$ and $\mathcal{A}_{\mathcal{Q},\ip} \subset F_1 e_3(\mathcal{Q},\ip)^2$.

The map $\mathcal{Q}[1] \to F_1 e_3(\mathcal{Q},\ip)[2]$ is a CtL morphism $\mathcal{Q} \to (F_1 e_3(\mathcal{Q},\ip)[2], \pi^\dagger(1))$ hence extends to a morphism of marked $\mathcal{O}_{X^\sharp}$-Lie algebroids
\[
(\tau\mathcal{Q},\mathfrak{c}) \to (F_1 e_3(\mathcal{Q},\ip)[2], \pi^\dagger(1)) .
\]

\subsection{Some examples}
\begin{example}{\emph{(Exact Courant algebroids)}}
Applying Theorem \ref{thm: ext to ext equiv} to $\mathcal{T}_X$ we obtain the equivalence of \cite{CR}, namely the equivalence of categories $\ECA(X)_{n-1} \cong \OExt{n}(\mathcal{T}_{X^\sharp})$ (denoted $\mathcal{O}_{X^\sharp}-\operatorname{LA}(X^\sharp)$ in loc. cit.).
\end{example}

\begin{example}{\emph{(Commutative Courant algebroids)}}
The sheaf of differential forms of degree $n-1$ has a canonical structure of a commutative $(n-1)$-dimensional Courant algebroid. The $\mathcal{O}_{X^\sharp}$-Lie algebroid $\tau\Omega^n_X$ is $\mathcal{O}_{X^\sharp}[n]$ with trivial bracket and anchor.
\end{example}

\begin{example}{\emph{(Quadratic Lie algebras)}}
Suppose that $\mathcal{A}$ is a transitive $\mathcal{O}_X$-Lie algebroid. Let $\mathfrak{g}$ denote the kernel of the anchor map. Let $\ip$ be a $\mathcal{A}$-invariant symmetric $\mathcal{O}_X$-bilinear pairing on $\mathfrak{g}$. This data determines a Courant algebroid with trivial anchor $\widehat{\mathfrak{g}}$ which is a Courant extension of $\mathfrak{g}$ (see \cite{B}, 3.2 for details).
The Courant algebroid $\widehat{\mathfrak{g}}$ transgresses to an abelian extension of the DGLA $\mathfrak{g}^\sharp$ by $\mathcal{O}_{X^\sharp}[2]$.

The construction applies when $X$ is a connected manifold of dimension zero, in which case $\mathfrak{g}$ is a $\mathbb{C}$-Lie algebra equipped with an invariant symmetric bilinear form. In this case the DGLA $\tau\mathfrak{g}$ is easily seen to be $\mathbb{C}[2]\oplus\mathfrak{g}[1]\oplus\mathfrak{g}$ with the bracket given by the symmetric form, the adjoint action and the bracket on $\mathfrak{g}$ and the only non-trivial component of the differential the identity map of $\mathfrak{g}$.

\end{example}

\section{Proofs}\label{section: proofs}
\subsection{Lemma \ref{lemma: LA structure on pr-ev}}\label{subsection: proof of LA structure descends}
Suppose that
$f\in\mathcal{O}_X,\,\omega_1,\omega_2\in\mathcal{O}_{X^{\sharp}},\,a_1,a_2\in\mathcal{A}$.
The identities
\begin{multline}\label{lemma: LA structure on pr-ev 1}
[\omega_1\otimes1\otimes fa_1,\omega_2\otimes1\otimes a_2]= \\
=[f\omega_1\otimes1\otimes a_1,\omega_2\otimes1\otimes
a_2]+[\omega_1\wedge df\otimes\epsilon\otimes
a_1,\omega_2\otimes1\otimes a_2];
\end{multline}
\begin{multline}\label{lemma: LA structure on pr-ev 2}
[\omega_1\otimes1\otimes fa_1,\omega_2\otimes\epsilon\otimes a_2]= \\
=[f\omega_1\otimes1\otimes a_1,\omega_2\otimes\epsilon\otimes
a_2]+[\omega_1\wedge df\otimes\epsilon\otimes
a_1,\omega_2\otimes\epsilon\otimes a_2];
\end{multline}
\begin{equation}\label{lemma: LA structure on pr-ev 3}
[f\omega_1\otimes\epsilon\otimes a_1,
\omega_2\otimes\epsilon\otimes
a_2]=[\omega_1\otimes\epsilon\otimes fa_1,
\omega_2\otimes\epsilon\otimes a_2];
\end{equation}
\begin{equation}\label{lemma: LA structure on pr-ev 4}
[f\omega_1\otimes\epsilon\otimes a_1, \omega_2\otimes1\otimes
a_2]=[\omega_1\otimes\epsilon\otimes fa_1, \omega_2\otimes1\otimes
a_2];
\end{equation}
show that the bracket on
$\mathcal{O}_{X^{\sharp}}[\epsilon]\otimes_{\mathbb{C}}\mathcal{A}$
descends to $\pr_*\ev^*\mathcal{A}$. We will verify equations
\eqref{lemma: LA structure on pr-ev 1} and \eqref{lemma: LA
structure on pr-ev 4} leaving \eqref{lemma: LA structure on pr-ev
2} and \eqref{lemma: LA structure on pr-ev 3} to the reader.
\begin{align*}
[\omega_1\otimes1&\otimes fa_1,\omega_2\otimes1\otimes a_2]=\omega_1\wedge(fL_{\sigma(a_1)}\omega_2+df\wedge(\iota_{\sigma(a_1)}\omega_2)\otimes1\otimes a_2+\\
&+(-1)^{\omega_1\omega_2}\omega_2\wedge\omega_1\otimes1\otimes
f[a_1,a_2]-(-1)^{\omega_1\omega_2}\omega_2\wedge\omega_1\otimes1\otimes
L_{\sigma(a_2)}f\cdot a_1+\\
& -(-1)^{\omega_1\omega_2}\omega_2\wedge
L_{\sigma(a_2)}\omega_1\otimes 1 \otimes f\cdot a_1 = \\
& = [f\omega_1\otimes1\otimes a_1,\omega_2\otimes1\otimes
a_2]+[\omega_1\wedge df\otimes\epsilon\otimes
a_1,\omega_2\otimes1\otimes a_2]
\end{align*}
\begin{align*}
[f\omega_1&\otimes\epsilon\otimes a_1,
\omega_2\otimes1\otimes a_2] = f\omega_1\wedge(\iota_{\sigma(a_1)}\omega_2)\otimes1\otimes a_2+ \\
& +(-1)^{(\omega_1-1)\omega_2}\omega_2\wedge
f\omega_1\otimes\epsilon\otimes
[a_1,a_2]-(-1)^{(\omega_1-1)\omega_2}\omega_2\wedge
L_{\sigma(a_2)}(f\omega_1)\otimes\epsilon\otimes a_1=\\
& = [\omega_1\otimes\epsilon\otimes fa_1, \omega_2\otimes1\otimes
a_2].
\end{align*}
The calculations
\[
\sigma(1\otimes\epsilon\otimes
fa)=\iota_{\sigma(fa)}=f\iota_{\sigma(a)}=\sigma(f\otimes\epsilon\otimes
a),
\]
\begin{align*}
\sigma(1\otimes1\otimes fa) & =
L_{\sigma(fa)}=fL_{\sigma(a)}+df\wedge\iota_{\sigma(a)}\\
& = \sigma(f\otimes1\otimes a)+\sigma(df\otimes\epsilon\otimes a)
\end{align*}
show that the anchor map descends to $\pr_*\ev^*\mathcal{A}$.

\subsection{Lemma \ref{lemma: iota descends}}\label{subsection: proof of iota descends}
{~}\\
(1) Suppose that $f\in\mathcal{O}_{X}$, $\alpha\in\Omega^{k+1}_X$
and $B\in(\pr_*\ev^{*}\mathcal{E})^{0}$. From the computations
\begin{align*}
\widetilde{\iota_{D}}(f\alpha\otimes B) & =
\iota_{\sigma(D)}(f\alpha)\otimes
B+(-1)^{\alpha}f\alpha\otimes\iota_{D}B\\
& = f\iota_{D}(\alpha)\otimes
B+(-1)^{\alpha}f\alpha\otimes\iota_{D}B\\
& = f(\widetilde{\iota_{D}}\alpha\otimes B)
\end{align*}
and
\begin{align*}
\widetilde{\iota_{D}}(\alpha\otimes fB)& =
\iota_{\sigma(D)}(\alpha)\otimes
fB+(-1)^{\alpha}\alpha\otimes\iota_{D}fB\\
& = f\widetilde{\iota_{D}}(\alpha\otimes B),
\end{align*} it follows that $\widetilde{\iota_{D}}(\alpha\otimes fB) = \widetilde{\iota_{D}}(f\alpha\otimes
B)$. \\

\noindent (2)  Suppose that $\gamma\in\Omega^{k}_X$ and
$B\in(\pr_*\ev^*\mathcal{E})$. The principal symbol of $\widetilde{\iota_D}$ is equal to
\begin{align*}
[\widetilde{\iota_{D}},\gamma](B) & =
(\widetilde{\iota_{D}}\circ\gamma-(-1)^{\gamma}\gamma\circ\widetilde{\iota_{D}})B\\
& =
\iota_{\sigma(D)}\gamma\otimes(B)+(-1)^{\gamma}\gamma\widetilde{\iota_{D}}(B)-(-1)^{\gamma}\gamma\widetilde{\iota_{D}}(B)\\
& = \iota_{\sigma(D)}\gamma\otimes(B)\\
& = (\sigma(D)\otimes\epsilon)\gamma\otimes B.
\end{align*}

\subsection{Lemma \ref{lemma: Atiyah action}}\label{subsection: proof of Atiyah action}
{~}\\
(1)
\begin{align*}
\widetilde{D}(A\otimes e) & =
[\partial,\widetilde{\iota_{D}}](A\otimes e)\\
& =
(\partial\circ\widetilde{\iota_{D}}+\widetilde{\iota_{D}}\circ\partial)(A\otimes
e)\\
& = d(\iota_{\sigma(D)}A\otimes e+(-1)^A A\epsilon\otimes
De)+\widetilde{\iota_{D}}(\partial A\otimes e)\\
& = d\iota_{\sigma(D)}A\otimes e+A\otimes
De+\iota_{\sigma(D)}d A\otimes e\\
& = L_{\sigma(D)}A\otimes e + A\otimes De
\end{align*}

\noindent (2) Suppose that $D_1,D_2\in\mathcal{A}_{\mathcal{E}}$. The calculation
\begin{align*}
[\widetilde{D_1},\widetilde{D_2}](A\otimes e) & = (\widetilde{D_1}\circ\widetilde{D_2}-\widetilde{D_2}\circ\widetilde{D_1})(A\otimes e) \\
& = \widetilde{D_1}(L_{\sigma(D_2)}A\otimes e+ A\otimes D_2e)-\widetilde{D_2}(L_{\sigma(D_1)}A\otimes e+ A\otimes D_1e)\\
& = L_{\sigma(D_1)}L_{\sigma(D_2)}A\otimes e + L_{\sigma(D_2)}A\otimes D_1e \\
& + L_{\sigma(D_1)}A\otimes D_2e+ A\otimes D_1D_2e - L_{\sigma(D_2)}L_{\sigma(D_1)}A\otimes e\\
& - L_{\sigma(D_1)}A\otimes D_2e- L_{\sigma(D_2)}A\otimes D_1e- A\otimes D_2D_1e\\
& = [L_{\sigma(D_1)},L_{\sigma(D_2)}]A\otimes e+ A\otimes[D_1,D_2]e\\
& = L_{[\sigma(D_1),\sigma(D_2)]}A\otimes e+ A\otimes[D_1,D_2]e\\
& = \widetilde{[D_1,D_2]}(A\otimes e)
\end{align*}
shows that the map $D\mapsto\widetilde{D}$ is compatible with
brackets.\\

\noindent (3) The commutativity of the diagram follows from the first item.\\

\noindent (4) Left to the reader.

\subsection{Lemma \ref{lemma: morphism of LA}}\label{subsection: proof of morphism of LA}
{~}\\
(1) Suppose that $D\in\mathcal{A}_{\mathcal{E}}$ and
$\omega_1,\omega_2\in\mathcal{O}_{X^{\sharp}}$. By definition of
$L^{\mathcal{E}}$
\[
L^{\mathcal{E}}((\omega_1+\omega_2\otimes\epsilon)\otimes
D)=\omega_1\otimes\widetilde{D}+\omega_2\otimes\widetilde{\iota_{D}},
\]
For each $f\in\mathcal{O}_X$ the formula $\widetilde{fD}=df\otimes\widetilde{\iota_{D}}+f\widetilde{D}$ holds.
The calculation
\begin{align*}
L^{\mathcal{E}}((\omega_1+\omega_2\otimes\epsilon)\otimes fD) & =
\omega_1\otimes\widetilde{fD}+\omega_2\otimes f\widetilde{\iota_D}\\
& = f\omega_1\otimes\widetilde{D}+(\omega_1+ f\omega_2)\otimes\widetilde{\iota_{D}}\\
& = L^{\mathcal{E}}((f\omega_1+(f\omega_2+\omega_1\wedge df)\epsilon)\otimes D)\\
& = L^{\mathcal{E}}((\omega_1+\omega_2\otimes\epsilon)(f+df\otimes\epsilon)\otimes D),
\end{align*}
shows that the map $L^{\mathcal{E}}$ descends to the map
$\mathcal{A}^{\sharp}_{\mathcal{E}}\to\mathcal{A}_{\pr_*\ev^*\mathcal{E}}$.\\

\noindent (2) Let
$\omega_1,\omega_2,\gamma_1,\gamma_2\in\mathcal{O}_{X^{\sharp}}$
and $D_1,D_2\in\mathcal{A}_{\mathcal{E}}$. We leave verification
of the identities
$[\widetilde{D_1},\widetilde{\iota_{D_2}}]=\widetilde{\iota_{[D_1,D_2]}}$
and $[\widetilde{\iota_{D_1}},\widetilde{\iota_{D_2}}]=0$ to the
reader. The calculation
\begin{align*}
L^{\mathcal{E}} & ([\omega_1\otimes D_1+ \gamma_1\otimes\epsilon\otimes D_1, \omega_2\otimes D_2+\gamma_2\otimes\epsilon\otimes D_2])=\\
& =L^{\mathcal{E}}(\omega_1\wedge L_{\sigma(D_1)}\omega_2\otimes D_2+(-1)^{\omega_1\omega_2}\omega_2\wedge\omega_1\otimes[D_1,D_2]+ \\
& -(-1)^{\omega_1\omega_2}\omega_2\wedge L_{\sigma(D_2)}\omega_1\otimes D_1)+L^{\mathcal{E}}(\omega_1\wedge L_{\sigma(D_1)}\gamma_2\otimes\epsilon\otimes D_2+\\
& +(-1)^{\omega_1\omega_2}\gamma_2\wedge\omega_1\otimes\epsilon\otimes[D_1,D_2]-(-1)^{\omega_1\omega_2+\omega_1}\gamma_2\wedge\iota_{\sigma(D_2)}\omega_1\otimes D_1)+\\
& +L^{\mathcal{E}}(\gamma_1\wedge\iota_{\sigma(D_1)}\omega_2\otimes D_2-(-1)^{\omega_2\omega_1}\omega_2\wedge\gamma_1\otimes\epsilon\otimes[D_1,D_2]+\\
& +(-1)^{\omega_1\omega_2+\omega_1}\omega_2\wedge L_{\sigma(D_2)}\gamma_1\otimes\epsilon\otimes D_1)+L^{\mathcal{E}}(\gamma_1\wedge\iota_{\sigma(D_1)}\gamma_2\otimes\epsilon\otimes D_2+\\
&+(-1)^{\omega_1\omega_2+\omega_2+\omega_1}\gamma_2\wedge\iota_{\sigma(D_2)}\gamma_1\otimes\epsilon\otimes D_1)=\\
& = \omega_1\wedge L_{\sigma(D_1)}\omega_2\otimes\widetilde{D_2} +(-1)^{\omega_1\omega_2}\omega_2\wedge\omega_1\otimes\widetilde{[D_1,D_2]}+\\
& -(-1)^{\omega_1\omega_2}\omega_2\wedge L_{\sigma(D_2)}\omega_1\otimes\widetilde{D_1} +\omega_1\wedge L_{\sigma(D_1)}\gamma_2\otimes\widetilde{\iota_{D_2}}+\\
& +(-1)^{\omega_1\omega_2}\gamma_2\wedge\omega_1\otimes\widetilde{\iota_{[D_1,D_2]}}-(-1)^{\omega_1\omega_2+\omega_1}\gamma_2\wedge\iota_{\sigma(D_2)}\omega_1\otimes\widetilde{D_1}+\\
& +\gamma_1\wedge\iota_{\sigma(D_1)}\omega_2\otimes\widetilde{D_2}+(-1)^{\omega_2\omega_1}\omega_2\wedge\gamma_1\otimes\widetilde{\iota_{[D_1,D_2]}}+\\
& -(-1)^{\omega_1\omega_2+\omega_1}\omega_2\wedge L_{\sigma(D_2)}\gamma_1\otimes\widetilde{\iota_{D_1}}+ \gamma_1\wedge\iota_{\sigma(D_1)}\gamma_2\otimes\widetilde{\iota_{D_2}}+\\
&+(-1)^{\omega_1\omega_2+\omega_2+\omega_1}\gamma_2\wedge\iota_{\sigma(D_2)}\gamma_1\otimes\widetilde{\iota_{D_1}}= \\
& =[L^{\mathcal{E}}(\omega_1\otimes D_1+ \gamma_1\otimes\epsilon\otimes D_1), L^{\mathcal{E}}(\omega_2\otimes D_2+\gamma_2\otimes\epsilon\otimes D_2)]
\end{align*}
shows that the map $L^{\mathcal{E}}$ is a map of Lie algebras. We
leave to the reader to check that \eqref{Lie-E} commutes with
respective anchor maps. Hence it is a map of
$\mathcal{O}_{X^{\sharp}}$-Lie algebroids.
\subsection{Lemma \ref{lemma: bracket Leibniz}}\label{subsection: proof bracket Leibniz}
Suppose that $a=\omega_1+\alpha_1\epsilon\otimes
q_1+\beta_1\otimes r_1$ and $b=\omega_2+\alpha_2\epsilon\otimes
q_2+\beta_2\otimes r_2$ are homogeneous elements, where
$\omega_i,\alpha_i,\beta_i\in\mathcal{O}_{X^{\sharp}}$ and
$q_i,r_i\in\mathcal{Q}$ for $i=1,2$. The calculation
\begin{align*}
\partial & ([\alpha_1\epsilon\otimes q_2,\alpha_2\epsilon\otimes q_2]) = \partial(\alpha_1\wedge(\iota_{\pi(q_1)}\alpha_2)\epsilon\otimes q_2) \\
&
+(-1)^{\alpha_1\alpha_2+\alpha_1+\alpha_2}\partial(\alpha_2\wedge(\iota_{\pi(q_2)}\alpha_1)\epsilon\otimes
q_1)+(-1)^{\alpha_1\alpha_2+\alpha_2}d(\alpha_2\wedge\alpha_1\wedge\langle
q_1,q_2\rangle)\\
&=d(\alpha_1\wedge(\iota_{\pi(q_1)}\alpha_2))\epsilon\otimes
q_2+(-1)^{\alpha_1+\alpha_2-1}\alpha_2\wedge(\iota_{\pi(q_2)})\alpha_1\otimes
q_2\\
&
+(-1)^{(\alpha_1-1)\alpha_2}d(\alpha_2\wedge\alpha_1\wedge\langle
q_1,q_2\rangle)+(-1)^{\alpha_1\alpha_2+\alpha_1-\alpha_2}(d(\alpha_2\wedge(\iota_{\pi_{q_2}}\alpha_1))\epsilon\otimes
q_1 \\
&
+(-1)^{\alpha_2+\alpha_1-1}\alpha_2\wedge(\iota_{\pi(q_2)}\alpha_1)\otimes
q_1) \\
& = [\partial(\alpha_1\epsilon\otimes q_1),\alpha_2\epsilon\otimes
q_2]+(-1)^{(\alpha_1-1)}[\alpha_1\epsilon\otimes q_1,
\partial(\alpha_2\epsilon\otimes q_2)],
\end{align*}
shows that \eqref{bracket and differential} holds in the case
\eqref{bracket with scalars-1} of the operation \eqref{bracket no
scalars}. We leave the cases \eqref{bracket with
scalars-2}-\eqref{bracket with scalars-4} to the reader.

\subsection{Lemma \ref{lemma: two-sided ideal}} \label{subsection: two-sided ideal}
Suppose that $a=\omega+\alpha\epsilon\otimes q+\beta\otimes
r\in\mathcal{O}_{X^\sharp}[k]\oplus
\mathcal{O}_{X^\sharp}[\epsilon]\otimes_\mathbb{C}\mathcal{Q}$ is
a homogeneous element and $b=\gamma\otimes fs-f\gamma\otimes
s-\gamma\wedge df \epsilon \otimes s\in K$, where
$\omega,\alpha,\beta,\gamma\in\mathcal{O}_{X^{\sharp}}$,
$f\in\mathcal{O}_X$ and $q,r,s\in\mathcal{Q}$. Then,
$$[a,b]=[\omega,b]+[\alpha\epsilon\otimes q,b]+[\beta\otimes r,b].$$ The formulas \eqref{bracket with
scalars-5} and \eqref{bracket with scalars-6} imply that
$[\omega,b]\in K$. The calculations,
\begin{multline*}
[\alpha\epsilon\otimes q,b]=[\alpha\epsilon\otimes q,
\gamma\otimes fs-f\gamma\otimes s-\gamma\wedge df
\epsilon \otimes s] \\
=\alpha\wedge\iota_{\pi(q)}\gamma(1\otimes fs-f\otimes s-df
\epsilon \otimes
s)+(-1)^{(\alpha-1)\gamma}\gamma\wedge\alpha(1\epsilon
f\{q,s\} \\
-f \epsilon \{q,s\})+(-1)^{\gamma}\alpha\wedge\gamma(1 \epsilon
\otimes (L_{\pi(q)})fs-\iota_{\pi(q)}(df) \epsilon \otimes s)
\end{multline*}
and
\begin{multline*}
[\beta\otimes r, b]=[\beta\otimes r, \gamma\otimes
fs-f\gamma\otimes s-\gamma\wedge df
\epsilon \otimes s] \\
=\beta\wedge (L_{\pi(r)}\gamma)(1\otimes fs-f\otimes
s-df\epsilon\otimes
s)+(-1)^{\beta\gamma}\gamma\wedge\beta(1\otimes(L_{\pi(r)}f)s \\
-(L_{\pi(r)}f)\otimes s-d(L_{\pi(r)}f) \epsilon \otimes
s)+(-1)^{\beta\gamma}\gamma\wedge\beta(1\otimes
f\{r,s\}-f\otimes \{r,s\}\\
-df \epsilon \otimes \{r,s\})
\end{multline*}
show that $[\alpha\epsilon\otimes q,b],[\beta\otimes r, b]\in K$.
We leave it to the reader to verify that $[b,a]\in K$.

\subsection{Proposition \ref{tau as a marked Lie}}\label{subsection: tau as a marked
Lie} The proof of the proposition proceeds in several steps. All
elements are assumed to be homogeneous. Suppose that
$\omega,\alpha_i \in\mathcal{O}_{X^{\sharp}}$,
$q,q_i\in\mathcal{Q}$ for $i=1,2$.
\subsubsection{Skew-symmetry}
Calculations
\[
[\alpha_1\epsilon\otimes q_1,\alpha_2\epsilon\otimes
q_2]+(-1)^{(\alpha_1-1)(\alpha_2-1)}[\alpha_2\epsilon\otimes
q_2,\alpha_1\epsilon\otimes q_1]=0,
\]
\begin{multline*}
[\alpha_1\otimes q_1,\alpha_2\epsilon\otimes
q_2]+(-1)^{\alpha_1(\alpha_2-1)}[\alpha_2\epsilon\otimes
q_2,\alpha_1\otimes
q_1]=\\
(-1)^{\alpha_1\alpha_2}\alpha_2\wedge\alpha_1(\epsilon\otimes\pi^{\dagger}(d\langle
q_1,q_2\rangle)-d\langle q_1,q_2\rangle)=0,
\end{multline*}
\begin{multline*}
[\alpha_1\otimes q_1,\alpha_2\otimes
q_2]+(-1)^{\alpha_1\alpha_2}[\alpha_2\otimes q_2,\alpha_1\otimes
q_1]= \\
(-1)^{\alpha_1\alpha_2}\alpha_2\wedge\alpha_1\otimes
\pi^{\dagger}(d\langle q_1,q_2\rangle)=0,
\end{multline*}
\[
[\alpha\epsilon\otimes
q,\omega]+(-1)^{(\alpha-1)\omega}[\omega,\alpha\epsilon\otimes
q]=0,
\]
\[
[\alpha\otimes q,\omega]+(-1)^{\alpha\omega}[\omega,\alpha\otimes
q]=0
\]
show that symmetrization of the brackets given by formulas
\eqref{bracket with scalars-1}--\eqref{bracket with scalars-6} is
trivial. Thus the operation \eqref{bracket no scalars} is
skew-symmetric on $\tau\mathcal{Q}$.

\subsubsection{Jacobi identity} We outline the proof of the Jacobi identity leaving details to the reader. It is easy to
see that Jacobi identity holds for the rules $(1)$-$(6)$ that
define the operation \eqref{bracket no scalars} on
$\tau\mathcal{Q}$. By the linearity of the operation
\eqref{bracket no scalars}, it is enough to check on it the Jacobi
identity for the combination of elements:
\begin{multline}\label{jacobi identity 1}
[\alpha_1\epsilon\otimes q_1,[\alpha_2\epsilon\otimes
q_2,\alpha_3\epsilon\otimes q_3]]=\\
[[\alpha_1\epsilon\otimes q_1,\alpha_2\epsilon\otimes
q_2],\alpha_3\epsilon\otimes q_3]\pm
[\alpha_2\epsilon\otimes q_2,[\alpha_1\epsilon\otimes
q_1,\alpha_3\epsilon\otimes q_3]]
\end{multline}
\begin{multline}\label{jacobi identity 2}
[\alpha_1\otimes q_1,[\alpha_2\otimes
q_2,\alpha_3\otimes q_3]]=\\
[[\alpha_1\otimes
q_1,\alpha_2\otimes q_2],\alpha_3\otimes
q_3]\pm[\alpha_2\otimes q_2,[\alpha_1\otimes
q_1,\alpha_3\otimes q_3]]
\end{multline}
\begin{multline}\label{jacobi identity 3}
[\alpha_1\epsilon\otimes q_1,[\alpha_2\otimes
q_2,\alpha_3\otimes q_3]]= \\
[[\alpha_1\epsilon\otimes
q_1,\alpha_2\otimes q_2],\alpha_3\otimes
q_3]\pm[\alpha_2\otimes q_2,[\alpha_1\epsilon\otimes
q_1,\alpha_3\otimes q_3]]
\end{multline}
\begin{multline}\label{jacobi identity 4}
[\alpha_1\epsilon\otimes q_1,[\alpha_2\epsilon\otimes q_2,\alpha_3\otimes q_3]]= \\
[[\alpha_1\epsilon\otimes q_1,\alpha_2\epsilon\otimes
q_2],\alpha_3\otimes q_3]\pm[\alpha_2\epsilon\otimes
q_2,[\alpha_1\epsilon\otimes q_1,\alpha_3\otimes q_3]]
\end{multline}
\begin{multline}\label{jacobi identity 5}
[\alpha_1 \epsilon \otimes q_1,[\alpha_2\otimes
q_2,\alpha_3 \epsilon \otimes q_3]]= \\
[[\alpha_1 \epsilon \otimes q_1,\alpha_2\otimes
q_2],\alpha_3\epsilon \otimes q_3] \pm [\alpha_2
\otimes q_2,[\alpha_1\epsilon\otimes q_1,\alpha_3
\epsilon \otimes q_3]] \\
\end{multline}
\begin{multline}\label{jacobi identity 6}
[\alpha_1 \otimes q_1,[\alpha_2\otimes q_2,\alpha_3\epsilon\otimes q_3]]=\\
[[\alpha_1\otimes q_1,\alpha_2\otimes q_2],\alpha_3\epsilon\otimes q_3]\pm[\alpha_2\otimes q_2,[\alpha_1\otimes q_1,\alpha_3\epsilon\otimes q_3]]
\end{multline}
\begin{multline}\label{jacobi identity 7}
[\alpha_1 \otimes q_1,[\alpha_2\epsilon\otimes q_2,\alpha_3\otimes q_3]]=\\
[[\alpha_1\otimes q_1,\alpha_2\epsilon\otimes q_2],\alpha_3\otimes q_3]\pm[\alpha_2\epsilon\otimes q_2,[\alpha_1\otimes q_1,\alpha_3\otimes q_3]]
\end{multline}
\begin{multline}\label{jacobi identity 8} [\alpha_1\otimes q_1,[\alpha_2\epsilon\otimes
q_2,\alpha_3 \epsilon \otimes q_3]]= \\
[[\alpha_1\otimes q_1,\alpha_2\epsilon\otimes
q_2],\alpha_3\epsilon \otimes q_3] \pm [\alpha_1
\otimes q_1,[\alpha_2\epsilon\otimes q_2,\alpha_3
\epsilon \otimes q_3]] \\
\end{multline}
Since \eqref{ip invariance} holds in $\mathcal{Q}$, \eqref{jacobi
identity 1} and \eqref{jacobi identity 2} are deduced using
formulas \eqref{bracket with scalars-1}-\eqref{bracket with
scalars-6} and the Leibniz rule. The verification of \eqref{jacobi identity
3}-\eqref{jacobi identity 7} is reported in the next
calculations.\\
The left hand side of equation \eqref{jacobi identity 3} is equal
to:
\begin{align*}
[\alpha_1 & \epsilon\otimes q_1,[\alpha_2\otimes
q_2,\alpha_3\otimes q_3]]= \alpha_1\wedge\iota_{\pi(q_1)}(\alpha_2 L_{\pi(q_2)}\alpha_3)\otimes q_3\pm \\
& \alpha_2\wedge(L_{\pi(q_2)}\alpha_3)\wedge\alpha_1 d\langle
q_1,q_3\rangle\pm
\alpha_2\wedge(L_{\pi(q_2)}\alpha_3)\wedge\alpha_1\epsilon\otimes\{q_1,q_3\}\pm\\
&
\alpha_2\wedge(L_{\pi(q_2)}\alpha_3)\wedge(L_{\pi(q_3)}\alpha_1)\epsilon\otimes
q_1
\alpha_1\wedge\iota_{\pi(q_1)}(\alpha_3\alpha_2)\{q_2,q_3\}\pm\alpha_3\wedge\alpha_2\wedge\alpha_1\wedge
d\langle q_1,\{q_2,q_3\}\rangle\pm\\
& \alpha_3\wedge\alpha_2\wedge\alpha_1\epsilon\otimes\{q_1,\{q_2,q_3\}\}
\pm
\alpha_3\wedge\alpha_2\wedge(L_{[\pi(q_2),\pi(q_3)]}\alpha_1)\epsilon\otimes
q_1\pm\\
&
\alpha_1\wedge\iota_{\pi(q_1)}(\alpha_3\wedge L_{\pi(q_3)}\alpha_2)\otimes
q_2\pm \alpha_3\wedge(L_{\pi(q_3)}\alpha_2)\wedge\alpha_1\wedge d\langle q_1,q_2
\rangle\pm\\
&
\alpha_3\wedge(L_{\pi(q_3)}\alpha_2)\wedge\alpha_1\epsilon\otimes\{q_1,q_2\}\pm
\alpha_3\wedge(L_{\pi(q_3)}\alpha_2)\wedge(L_{\pi(q_2)}\alpha_1)\epsilon\otimes
q_1.
\end{align*}
In the right hand side of \eqref{jacobi identity 3} the first
summand is equal to,
\begin{align*}
[[\alpha_1 & \epsilon\otimes q_1,\alpha_2\otimes
q_2],\alpha_3\otimes q_3]=
\alpha_1\wedge(\iota_{\pi(q_1)}\alpha_2)\wedge(L_{\pi(q_2)}\alpha_3)\otimes q_3\pm \\
&
\alpha_3\wedge\alpha_1\wedge(\iota_{\pi(q_1)}\alpha_2)\otimes\{q_2,q_3\}\pm
\alpha_3\wedge L_{\pi(q_3)}(\alpha_1\wedge\iota_{\pi(q_1)}\alpha_2)\otimes
q_2\pm\\
& \alpha_3\wedge L_{\pi(q_3)}(\alpha_2\wedge\alpha_1\wedge d\langle
q_1,q_2\rangle)\pm
\alpha_2\wedge\alpha_1\wedge(\iota_{\{q_1,q_2\}}\alpha_3)\otimes q_3\pm\\
& \alpha_3\wedge\alpha_2\wedge\alpha_1\wedge d\langle\{q_1,q_2\},q_3\rangle\pm
\alpha_3\wedge\alpha_2\wedge\alpha_1\epsilon\otimes\{\{q_1,q_2\},q_3\}\pm\\
&
\alpha_3\wedge L_{\pi(q_3)}(\alpha_2\wedge\alpha_1)\epsilon\otimes\{q_1,q_2\}\pm
\alpha_2\wedge(L_{\pi(q_2)}\alpha_1)\wedge(\iota_{\pi(q_1)}\alpha_3)\otimes
q_3\pm\\
& \alpha_3\wedge\alpha_2\wedge(L_{\pi(q_2)}\alpha_2)\wedge d\langle q_1,q_3\rangle\pm
\alpha_3\wedge\alpha_2\wedge(L_{\pi(q_2)}\alpha_1)\epsilon\otimes
\{q_1,q_3\} \pm\\
& \alpha_3\wedge L_{\pi(q_3)}(\alpha_2\wedge
L_{\pi(q_2)}\alpha_1)\epsilon\otimes q_1.
\end{align*}
The second summand is equal to:
\begin{align*}
 [\alpha_2 & \otimes q_2,[\alpha_1\epsilon\otimes
q_1,\alpha_3\otimes q_3]]=
\alpha_2\wedge L_{\pi(q_2)}(\alpha_1\wedge\iota_{\pi(q_1)}\alpha_3)\otimes
q_3\pm\\
&
\alpha_1\wedge(\iota_{\pi(q_1)}\alpha_3)\wedge\alpha_2\otimes\{q_2,q_3\}\pm
\alpha_1\wedge(\iota_{q_1}\alpha_3)\wedge(L_{\pi(q_3)}\alpha_2)\otimes
q_2\pm \\
& \alpha_{2}\wedge L_{\pi(q_2)}(\alpha_3\wedge\alpha_1\wedge d\langle q_1,q_3\rangle)
\pm
\alpha_2\wedge L_{\pi(q_2)}(\alpha_3\wedge\alpha_1)\epsilon\otimes\{q_1,q_3\}\pm
\\
&
\alpha_3\wedge\alpha_1\wedge\alpha_2\epsilon\otimes\{q_2,\{q_1,q_3\}\}\pm
\alpha_3\wedge\alpha_1\wedge(\iota_{[\pi(q_1),\pi(q_3)]}\alpha_2)\otimes
q_2\pm\\
& \alpha_2\wedge L_{\pi(q_2)}(\alpha_3\wedge
L_{\pi(q_3)}\alpha_1)\epsilon\otimes q_1\pm
\alpha_3\wedge(L_{\pi(q_3)}\alpha_1)\wedge\alpha_2\epsilon\otimes\{q_2,q_1\}\pm\\
&
\alpha_3\wedge(L_{\pi(q_3)}\alpha_1)\wedge(\iota_{\pi(q_1)}\alpha_2)\otimes
q_2.
\end{align*}
Using Leibniz rule the above calculations reduce \eqref{jacobi identity 3} to
\begin{multline*}
\alpha_3\wedge(L_{\pi(q_3)}\alpha_2)\wedge\alpha_1\wedge d\langle q_1,q_2
\rangle\pm
\alpha_3\wedge(L_{\pi(q_3)}\alpha_2)\wedge\alpha_1\epsilon\otimes\{q_1,q_2\}=\\
\alpha_3\wedge L_{\pi(q_3)}(\alpha_2\wedge\alpha_1)\epsilon\otimes\{q_1,q_2\}\pm\alpha_3\wedge(L_{\pi(q_3)}\alpha_1)\wedge\alpha_2\epsilon\otimes\{q_2,q_1\}
\end{multline*}
The latter equation follows from $\epsilon\otimes\{q_2,q_1\}=\epsilon\otimes(-\{q_1,q_2\}+\pi^{\dagger}d\langle
q_1,q_2\rangle)$.\\
In the case of equation \eqref{jacobi identity 4}, the left hand
side is equal to,
\begin{align*}
& [\alpha_1\epsilon\otimes q_1,[\alpha_2\epsilon\otimes q_2,\alpha_3\otimes q_3]]= \\
&
\alpha_1\wedge\iota_{\pi(q_1)}(\alpha_2\wedge\iota_{\pi(q_2)}\alpha_3)\otimes
q_3\pm \alpha_2\wedge(\iota_{\pi(q_2)}\alpha_3)\wedge\alpha_1\wedge d\langle
q_1,q_3\rangle \pm \\
&
\alpha_2\wedge(\iota_{\pi(q_2)}\alpha_3)\wedge\alpha_1\epsilon\otimes\{q_1,q_3\}\pm
\alpha_2\wedge(\iota_{\pi(q_2)}\alpha_3)\wedge(L_{\pi(q_3)}\alpha_1)\epsilon\otimes
q_1\pm \\
& \alpha_1\wedge\iota_{\pi(q_1)}(\alpha_3\wedge\alpha_2\wedge d\langle
q_2,q_3\rangle) \pm
\alpha_1\wedge\iota_{\pi(q_1)}(\alpha_3\wedge\alpha_2)\epsilon\otimes\{q_2,q_3\}\pm
\\
& \alpha_3\wedge\alpha_2\wedge\alpha_1\wedge\langle q_1,\{q_2,q_3\}\rangle\pm
\alpha_3\wedge\alpha_2\wedge\iota_{[\pi(q_2),\pi(q_3)]}\alpha_3\epsilon\otimes
q_1\pm \\
&
\alpha_1\wedge\iota_{\pi(q_1)}(\alpha_3\wedge L_{\pi(q_3)}\alpha_2)\epsilon\otimes
q_2\pm \alpha_3\wedge(L_{\pi(q_3)}\alpha_2)\wedge\alpha_1\langle
q_1,q_2\rangle \pm \\
&
\alpha_3\wedge(L_{\pi(q_3)}\alpha_2)\wedge(\iota_{\pi(q_2)}\alpha_1)\epsilon\otimes
q_1
\end{align*}
In the right hand side the first summand is equal to
\begin{align*}
& [[\alpha_1\epsilon\otimes q_1,\alpha_2\epsilon\otimes
q_2],\alpha_3\otimes q_3]= \\
&
\alpha_1\wedge(\iota_{\pi(q_1)}\alpha_2)\wedge(\iota_{\pi(q_2)}\alpha_3)\otimes
q_3\pm \alpha_3\wedge\alpha_1\wedge(\iota_{\pi(q_1)}\alpha_2)\wedge d\langle
q_2,q_3\rangle \pm \\
&
\alpha_3\wedge\alpha_1\wedge(\iota_{\pi(q_1)}\alpha_2)\epsilon\otimes\{q_2,q_3\}\pm
\alpha_3\wedge L_{\pi(q_3)}(\alpha_1\wedge\iota_{\pi(q_1)}\alpha_2)\epsilon\otimes
q_2\pm \\
& \alpha_3\wedge L_{\pi(q_3)}(\alpha_2\wedge\alpha_1\wedge\langle q_1,q_2\rangle)\pm
\alpha_2\wedge(\iota_{\pi(q_2)}\alpha_1)\wedge(\iota_{\pi(q_1)}\alpha_3)\otimes
q_3\pm\\
& \alpha_3\wedge\alpha_2\wedge(\iota_{\pi(q_2)}\alpha_1)\wedge d\langle
q_1,q_3\rangle \pm
\alpha_3\wedge\alpha_2\wedge(\iota_{\pi(q_2)}\alpha_1)\epsilon\otimes\{q_1,q_3\}\pm\\
&
\alpha_3\wedge L_{\pi(q_3)}(\alpha_2\wedge(\iota_{\pi(q_2)}))\epsilon\otimes
q_1
\end{align*}
The second summand is equal to,
\begin{align*}
& [\alpha_2\epsilon\otimes q_2,[\alpha_1\epsilon\otimes
q_1,\alpha_3\otimes q_3]]= \\
&
\alpha_2\wedge\iota_{\pi(q_2)}(\alpha_1\wedge\iota_{\pi(q_1)}\alpha_3)\otimes
q_3\pm \alpha_1\wedge(\iota_{\pi(q_1)}\alpha_3)\wedge\alpha_2\wedge d\langle
q_2,q_3\rangle \pm \\
&
\alpha_1\wedge(\iota_{\pi(q_1)}\alpha_3)\wedge\alpha_2\epsilon\otimes\{q_2,q_3\}\pm
\alpha_1\wedge(\iota_{\pi(q_1)}\alpha_3)\wedge(L_{\pi(q_3)}\alpha_2)\epsilon\otimes
q_2\pm \\
& \alpha_2\wedge\iota_{\pi(q_2)}(\alpha_3\wedge\alpha_1\wedge d\langle
q_1,q_3\rangle) \pm
\alpha_2\wedge\iota_{\pi(q_2)}(\alpha_3\wedge\alpha_1)\epsilon\otimes\{q_1,q_3\}\pm
\\
& \alpha_3\wedge\alpha_1\wedge\alpha_2\wedge\langle q_2,\{q_1,q_3\}\rangle\pm
\alpha_3\wedge\alpha_1\wedge\iota_{[\pi(q_1),\pi(q_3)]}\alpha_2\epsilon\otimes
q_2\pm \\
&
\alpha_2\wedge\iota_{\pi(q_2)}(\alpha_3\wedge L_{\pi(q_3)}\wedge\alpha_1)\epsilon\otimes
q_1\pm \alpha_3\wedge(L_{\pi(q_3)}\alpha_1)\wedge\alpha_2\wedge\langle
q_1,q_2\rangle \pm \\
&
\alpha_3\wedge(L_{\pi(q_3)}\alpha_1)\wedge(\iota_{\pi(q_1)}\alpha_2)\epsilon\otimes
q_2
\end{align*}
Now, equation \eqref{jacobi identity 4} follows from the Leibniz rule.\\
The left hand side of equation \eqref{jacobi identity 5} is equal to
\begin{align*}
& [\alpha_1 \epsilon \otimes q_1,[\alpha_2\otimes
q_2,\alpha_3 \epsilon \otimes q_3]]= \\
& \alpha_1\wedge\iota_{\pi(q_1)}(\alpha_2\wedge L_{\pi(q_2)}\alpha_3)\epsilon\otimes q_3 \pm \alpha_2\wedge(L_{\pi(q_2)}\alpha_3)\wedge\alpha_1\wedge\langle q_1,q_3\rangle \pm \\
& \alpha_{2}\wedge(L_{\pi(q_2)}\alpha_3)\wedge(\iota_{\pi(q_3)}\alpha_1)\epsilon\otimes q_1\pm \alpha_1\wedge\iota_{\pi(q_1)}(\alpha_3\wedge\alpha_2)\epsilon\otimes\{q_2,q_3\}\pm \\
& \alpha_3\wedge\alpha_2\wedge\alpha_1\wedge\langle q_1,\{q_2,q_3\}\rangle \pm \alpha_3\wedge\alpha_2\wedge(\iota_{[\pi(q_1),\pi(q_3)]}\alpha_1)\epsilon\otimes q_1\pm\\
& \alpha_{1}\wedge\iota_{\pi(q_1)}(\alpha_3\wedge\iota_{\pi(q_3)}\alpha_2)\otimes q_2\pm \alpha_3\wedge(\iota_{\pi(q_3)}\alpha_2)\wedge\alpha_1 \wedge d\langle q_1,q_2\rangle\pm \\
&
\alpha_3\wedge(\iota_{\pi(q_3)}\alpha_2)\wedge\alpha_1\epsilon\otimes\{q_1,q_2\}\pm
\alpha_3\wedge(\iota_{\pi(q_3)}\alpha_2)\wedge(L_{\pi(q_2)}\alpha_1)\epsilon\otimes
q_1
\end{align*}
The first summand on the right hand side is equal to
\begin{align*}
& [[\alpha_1 \epsilon \otimes q_1,\alpha_2\otimes
q_2],\alpha_3\epsilon \otimes q_3]= \\
& \alpha_1\wedge(\iota_{\pi(q_1)}\alpha_2)\wedge(L_{\pi(q_2)}\alpha_3)\epsilon\otimes q_3\pm \alpha_3\wedge\alpha_1\wedge(\iota_{\pi(q_1)}\alpha_2)\epsilon\otimes\{q_2,q_3\}\pm\\
& \alpha_3\wedge\iota_{\pi(q_3)}(\alpha_1\wedge\iota_{\pi(q_1)}\alpha_2)\otimes q_2 \pm \alpha_3\wedge\iota_{\pi(q_3)}(\alpha_2\wedge\alpha_1\wedge d\langle q_1,q_2\rangle)\pm\\
& \alpha_2\wedge\alpha_1\wedge(\iota_{[\pi(q_1),\pi(q_2)]}\alpha_3)\epsilon\otimes q_3\pm \alpha_3\wedge\alpha_2\wedge\alpha_1\wedge\langle q_1,\{q_2,q_3\} \rangle \pm\\
& \alpha_3\wedge\iota_{\pi(q_3)}(\alpha_2\wedge\alpha_1)\epsilon\otimes\{q_1,q_2\}\pm \alpha_2\wedge(L_{\pi(q_2)}\alpha_1)\wedge(\iota_{\pi(q_1)}\alpha_3)\epsilon q_3\pm\\
& \alpha_3\wedge\alpha_2\wedge(L_{\pi(q_2)}\alpha_1)\wedge\langle q_1,q_3\rangle \pm \alpha_3\wedge\iota_{\pi(q_3)}(\alpha_2\wedge L_{\pi(q_2)}\alpha_1)\epsilon\otimes q_1
\end{align*}
The second summand on the right hand side is equal to
\begin{align*}
& [\alpha_2
\otimes q_2,[\alpha_1\epsilon\otimes q_1,\alpha_3
\epsilon \otimes q_3]]=\\
& \alpha_2\wedge L_{\pi(q_2)}(\alpha_1\wedge\iota_{\pi(q_1)}\alpha_3)\epsilon\otimes q_3\pm \alpha_1\wedge(\iota_{\pi(q_1)}\alpha_3)\wedge\alpha_2\epsilon\otimes\{q_2,q_3\}\pm\\
& \alpha_1\wedge(\iota_{\pi(q_1)}\alpha_3)\wedge(\iota_{\pi(q_3)}\alpha_2)\otimes q_2\pm \alpha_2\wedge L_{\pi(q_2)}(\alpha_3\wedge\alpha_1\wedge \langle q_1,q_3\rangle)\pm\\
& \alpha_2\wedge L_{\pi(q_2)}(\alpha_3\wedge\iota_{\pi(q_3)}\alpha_1)\epsilon\otimes q_1\pm \alpha_3\wedge(\iota_{\pi(q_3)}\alpha_1)\wedge\alpha_2\epsilon\otimes \{q_1,q_2\}\pm\\
& \alpha_3\wedge(\iota_{\pi(q_3)}\alpha_1)\wedge\alpha_2\epsilon\otimes \pi^{\dagger}(d\langle q_1,q_2\rangle)\pm \alpha_3\wedge(\iota_{\pi(q_3)}\alpha_1)\wedge(\iota_{\pi(q_1)}\alpha_2)\otimes q_2
\end{align*}
Using Leibniz rule the above calculations reduce \eqref{jacobi identity 5} to
\[
0=\alpha_3\wedge(\iota_{\pi(q_3)}\alpha_1)\wedge\alpha_2\wedge d\langle q_1,q_2\rangle-\alpha_3\wedge(\iota_{\pi(q_3)}\alpha_1)\wedge\alpha_2\epsilon\otimes \pi^{\dagger}(d\langle q_1,q_2\rangle)
\]
which is one of the defining relations in $\tau\mathcal{Q}$.\\

In the case of equation \eqref{jacobi identity 6} the left hand side is equal to
\begin{align*}
& [\alpha_1 \otimes q_1,[\alpha_2\otimes q_2,\alpha_3\epsilon\otimes q_3]]=\\
& \alpha_1\wedge L_{\pi(q_1)}(\alpha_2\wedge L_{\pi(q_2)}\alpha_3)\epsilon\otimes q_3\pm \alpha_2\wedge (L_{\pi(q_2)}\alpha_3)\wedge \alpha_1\epsilon\otimes \{q_1,q_3\}\pm \\
& \alpha_2\wedge (L_{\pi(q_2)}\alpha_3)\wedge (\iota_{\pi(q_3)}\alpha_1)\otimes q_1\pm \alpha_1\wedge L_{\pi(q_1)}(\alpha_3\wedge \alpha_2)\epsilon\otimes\{q_2,q_3\}\pm\\
& \alpha_3\wedge \alpha_2\wedge \alpha_1\epsilon\otimes \{q_1,\{q_2,q_3\}\}\pm \alpha_3\wedge \alpha_2\wedge (\iota_{[\pi(q_2),\pi(q_3)]}\alpha_1)\otimes q_1\pm\\
& \alpha_1\wedge L_{\pi(q_1)}(\alpha_3\wedge \iota_{\pi(q_3)}\alpha_2)\otimes q_2\pm \alpha_3\wedge (\iota_{\pi(q_3)}\alpha_2)\wedge \alpha_1\otimes\{q_1,q_2\}\pm\\
& \alpha_3\wedge (\iota_{\pi(q_3)}\alpha_2)\wedge (L_{\pi(q_2)}\alpha_1)\otimes q_1
\end{align*}
The first summand of the right hand side is equal to
\begin{align*}
& [[\alpha_1\otimes q_1,\alpha_2\otimes q_2],\alpha_3\epsilon\otimes q_3]=\\
&\alpha_1\wedge (L_{\pi(q_1)}\alpha_2)\wedge (L_{\pi(q_2)}\alpha_3)\epsilon\otimes q_3\pm \alpha_3\wedge \alpha_1\wedge (L_{\pi(q_1)}\alpha_2)\epsilon\otimes\{q_2,q_3\}\pm\\
& \alpha_3\wedge \iota_{\pi(q_3)}(\alpha_1\wedge L_{\pi(q_1)}\alpha_2)\otimes q_2\pm \alpha_2\wedge \alpha_1\wedge (L_{[\pi(q_1),\pi(q_2)]}\alpha_3)\epsilon\otimes q_3 \pm \\
& \alpha_3\wedge \alpha_2\wedge \alpha_1\epsilon\otimes\{\{q_1,q_2\},q_3\}\pm \alpha_3\wedge \iota_{\pi(q_3)}(\alpha_2\wedge \alpha_1)\otimes\{q_1,q_2\}\pm\\
& \alpha_2\wedge (L_{\pi(q_2)}\alpha_1)\wedge (L_{\pi(q_1)}\alpha_3)\epsilon\otimes q_3\pm \alpha_3\wedge \alpha_2\wedge (L_{\pi(q_2)}\alpha_1)\otimes\{q_1,q_3\}\pm \\
& \alpha_3\wedge \iota_{\pi(q_3)}(\alpha_2\wedge L_{\pi(q_2)}\alpha_1)\otimes q_1
\end{align*}
The second summand is equal to
\begin{align*}
& [\alpha_2\otimes q_2,[\alpha_1\otimes q_1,\alpha_3\epsilon\otimes q_3]]=\\
& \alpha_2\wedge L_{\pi(q_2)}(\alpha_1\wedge L_{\pi(q_1)}\alpha_3)\epsilon\otimes q_3\pm \alpha_1\wedge (L_{\pi(q_1)}\alpha_3)\wedge \alpha_2\epsilon\otimes \{q_2,q_3\}\pm \\
& \alpha_1\wedge (L_{\pi(q_1)}\alpha_3)\wedge (\iota_{\pi(q_3)}\alpha_2)\otimes q_2\pm \alpha_2\wedge L_{\pi(q_2)}(\alpha_3\wedge \alpha_1)\epsilon\otimes\{q_1,q_3\}\pm\\
& \alpha_3\wedge \alpha_1\wedge \alpha_2\epsilon\otimes \{q_2,\{q_1,q_3\}\}\pm \alpha_3\wedge \alpha_1\wedge (\iota_{[\pi(q_1),\pi(q_3)]}\alpha_2)\otimes q_2\pm\\
& \alpha_2\wedge L_{\pi(q_2)}(\alpha_3\wedge \iota_{\pi(q_3)}\alpha_1)\otimes q_1\pm \alpha_3\wedge (\iota_{\pi(q_3)}\alpha_1)\wedge \alpha_2\otimes\{q_1,q_2\}\pm\\
& \alpha_3\wedge (\iota_{\pi(q_3)}\alpha_1)\wedge \alpha_2\otimes\pi^{\dagger}(d\langle q_1,q_2\rangle) \pm \alpha_3\wedge (\iota_{\pi(q_3)}\alpha_1)\wedge (L_{\pi(q_1)}\alpha_2)\otimes q_2
\end{align*}
Using the Leibniz rule the above calculations reduce \eqref{jacobi identity 6} to
\[
0=\alpha_3\wedge(\iota_{\pi(q_3)}\alpha_1)\wedge\alpha_2\otimes\pi^{\dagger}(d\langle q_1,q_2\rangle)
\]
which holds since $\alpha_3\wedge(\iota_{\pi(q_3)}\alpha_1)\wedge\alpha_2\otimes\pi^{\dagger}(d\langle q_1,q_2\rangle)=\alpha_3\wedge(\iota_{\pi(q_3)}\alpha_1)\wedge\alpha_2(dd\langle q_1,q_2\rangle)$.\\
The left hand side of equation \eqref{jacobi identity 7} is equal to:
\begin{align*}
& [\alpha_1 \otimes q_1,[\alpha_2\epsilon\otimes q_2,\alpha_3\otimes q_3]]=\\
& \alpha_1\wedge L_{\pi(q_1)}(\alpha_2\wedge\iota_{\pi(q_2)}\alpha_3)\otimes q_3 \pm \alpha_2\wedge(\iota_{\pi(q_2)}\alpha_3)\wedge\alpha_1\otimes\{q_1,q_3\}\pm\\
& \alpha_2\wedge(\iota_{\pi(q_2)}\alpha_3)\wedge(L_{\pi(q_3)}\alpha_1)\otimes q_1\pm \alpha_1\wedge L_{\pi(q_1)}(\alpha_3\wedge \alpha_2\wedge d\langle q_2,q_3\rangle)\pm\\
& \alpha_1\wedge L_{\pi(q_1)}(\alpha_3\wedge\alpha_2)\epsilon\otimes \{q_2,q_3\}\pm \alpha_3\wedge\alpha_2\wedge\alpha_1\epsilon\otimes\{q_1,\{q_2,q_3\}\}\pm\\
& \alpha_3\wedge\alpha_2\wedge(\iota_{[\pi(q_2),\pi(q_3)]}\alpha_1)\otimes q_1\pm \alpha_1\wedge L_{\pi(q_1)}(\alpha_3\wedge L_{\pi(q_3)}\alpha_2)\epsilon\otimes q_2 \pm \\
& \alpha_3\wedge(L_{\pi(q_3)}\alpha_2)\wedge\alpha_1\epsilon\otimes[q_1,q_2]\pm \alpha_3\wedge(L_{\pi(q_3)}\alpha_2)\wedge(\iota_{\pi(q_2)}\alpha_1)\otimes q_1
\end{align*}
The first summand of the right hand side is equal to:
\begin{align*}
& [[\alpha_1\otimes q_1,\alpha_2\epsilon\otimes q_2],\alpha_3\otimes q_3]=\\
& \alpha_1\wedge(L_{\pi(q_1)}\alpha_2)\wedge(\iota_{\pi(q_2)}\alpha_3)\otimes q_3\pm \alpha_3\wedge\alpha_1\wedge(L_{\pi(q_1)}\alpha_2)\wedge d\langle q_2,q_3\rangle\pm\\
& \alpha_3\wedge\alpha_1\wedge(L_{\pi(q_1)}\alpha_2)\epsilon\otimes[q_2,q_3]\pm \alpha_3\wedge L_{\pi(q_3)}(\alpha_1\wedge L_{\pi(q_1)}\alpha_2)\epsilon\otimes q_2\pm\\
& \alpha_2\wedge\alpha_1\wedge(\iota_{[\pi(q_1),\pi(q_2)]}\alpha_3)\otimes q_3\pm \alpha_3\wedge\alpha_2\wedge\alpha_1\wedge d\langle [q_1,q_2],q_3\rangle\pm\\
& \alpha_3\wedge\alpha_2\wedge\alpha_1\epsilon\otimes\{\{q_1,q_2\},q_3\}\pm \alpha_3\wedge L_{\pi(q_3)}(\alpha_2\wedge\alpha_1)\epsilon\otimes \{q_1,q_2\}\pm\\
& \alpha_2\wedge(\iota_{\pi(q_2)}\alpha_1)\wedge L_{\pi(q_1)}\alpha_3\otimes q_3\pm \alpha_3\wedge\alpha_2\wedge(\iota_{\pi(q_2)}\alpha_1)\otimes\{q_1,q_3\}\pm\\
& \alpha_3\wedge L_{\pi(q_3)}(\alpha_2\wedge\iota_{\pi(2)}\alpha_1)\otimes q_1
\end{align*}
The second summand is equal to
\begin{align*}
& [\alpha_2\epsilon\otimes q_2,[\alpha_1\otimes q_1,\alpha_3\otimes q_3]]=\\
& \alpha_2\wedge\iota_{\pi(q_2)}(\alpha_1\wedge L_{\pi(q_1)}\alpha_3)\otimes q_3\pm \alpha_1\wedge(L_{\pi(q_1)}\alpha_3)\wedge\alpha_2\wedge d\langle q_2,q_3\rangle\pm\\
& \alpha_1\wedge(L_{\pi(q_1)}\alpha_3)\wedge\alpha_2\epsilon\otimes\{q_2,q_3\}\pm \alpha_1\wedge(L_{\pi(q_1)}\alpha_3)\wedge L_{\pi(q_3)}\alpha_2\epsilon\otimes q_2\\
& \alpha_2\wedge\iota_{\pi(q_2)}(\alpha_3\alpha_1)\otimes \{q_1,q_3\}\pm \alpha_3\wedge\alpha_1\wedge\alpha_2\wedge d\langle q_2,\{q_1,q_3\}\rangle\pm\\
& \alpha_3\wedge\alpha_1\wedge\alpha_2\epsilon\otimes\{q_2,\{q_1,q_3\}\}\pm \alpha_3\wedge\alpha_1\wedge(L_{[\pi(q_1),\pi(q_3)]}\alpha_2)\epsilon\otimes q_2\pm\\
& \alpha_2\wedge\iota_{\pi(q_2)}(\alpha_3\wedge L_{\pi(q_3)}\alpha_1)\otimes q_1\pm \alpha_3\wedge(L_{\pi(q_3)}\alpha_1)\wedge\alpha_2\wedge d\langle q_1,q_2\rangle\pm\\
& \alpha_3\wedge(L_{\pi(q_3)}\alpha_1)\wedge\alpha_2\epsilon\otimes\{q_1,q_2\}\pm \alpha_3\wedge(L_{\pi(q_3)}\alpha_1)\wedge\alpha_2\wedge d\langle q_1,q_2\rangle \pm \\
& \alpha_3\wedge(L_{\pi(q_3)}\alpha_1)\wedge(L_{\pi(q_1)}\alpha_2)\epsilon\otimes q_2
\end{align*} Now, equation \eqref{jacobi identity 7} follows from the Leibniz rule.
It is left to the reader verification of equation \eqref{jacobi
identity 8} using Leibniz rule.

\subsubsection{Leibniz rule} Suppose that $a=\omega_1+\alpha_1\epsilon\otimes
q_1+\beta_1\otimes r_1$ and
$b=\omega_2+\alpha_2\epsilon\otimes q_2+\beta_2\otimes
r_2$ are homogeneous elements, where
$\omega_i,\alpha_i,\beta_i\in\mathcal{O}_{X^{\sharp}}$ and
$q_i,r_i\in\mathcal{Q}$ for $i=1,2$. For a homogeneous $\psi\in\mathcal{O}_{X^{\sharp}}$, the Leibniz rule says
\begin{align}\label{Leibniz in tau Q}
[a,\psi\cdot b]=\sigma(a)(\psi)\cdot
b+(-1)^{a\psi}\psi\cdot[a,b].
\end{align}
We verify \eqref{Leibniz in tau Q} in the case \eqref{bracket with
scalars-1} leaving the cases \eqref{bracket with
scalars-2}-\eqref{bracket with scalars-4} to the reader.
\begin{align*}
[\alpha_1&\epsilon\otimes q_1,\gamma\cdot(\alpha_2\epsilon\otimes q_2)]=[\alpha_1\epsilon\otimes q_1,(\gamma\wedge\alpha_2)\epsilon\otimes q_2]=\\
& =\alpha_1\wedge\iota_{\pi(q_1)}(\gamma\wedge\alpha_2)\epsilon\otimes q_2\pm \gamma\wedge\alpha_2\wedge\alpha_1\wedge\langle q_1,q_2\rangle \pm \gamma\wedge\alpha_2\wedge\iota_{\pi(q_2)}\alpha_1\epsilon\otimes q_1\\
& =\gamma\cdot[\alpha_1\epsilon\otimes q_1,\alpha_2\epsilon\otimes q_2] \pm \alpha_1\wedge(\iota_{\pi(q_1)}\gamma)\wedge(\alpha_2\epsilon\otimes q_2)\\
& = \gamma\cdot[\alpha_1\epsilon\otimes q_1,\alpha_2\epsilon\otimes q_2] \pm (\pi(\alpha_1\epsilon\otimes q_1)\gamma)\cdot(\alpha_2\epsilon\otimes q_2).
\end{align*}


\begin{thebibliography}{}
\bibitem[B]{B}
P.~Bressler, The first Pontryagin class. Compositio Math.,
\textbf{143}, 1127--1163 (2007).

\bibitem[C]{C}
\newblock T.~Courant, Dirac manifolds. Trans. Amer. Math. Soc., \textbf{319} (2), 631--661 (1990).

\bibitem[D]{D}
\newblock I.~Dorfman, Dirac structures and integrability of nonlinear evolution
equa- tions. Nonlinear Science: Theory and Applications, John Wiley and Sons Ltd., Chichester (1993).

\bibitem[FN]{FN} 
\newblock A.~Fr\"olicher, A.~Nijenhuis, Theory of vector valued differential forms
Part I. Indagationes Mathematicae \textbf{18}, 338--360 (1956).

\bibitem[KS1]{YKS1} Y.~Kosmann-Schwarzbach, Derived brackets. Lett. Math. Phys., \textbf{69}, 61--87 (2004).

\bibitem[KS2]{YKS2} Y.~Kosmann-Schwarzbach, From Poisson algebras to Gerstenhaber
algebras. Ann. Inst. Fourier, Grenoble \textbf{46} (5), 1243--1274 (1996).

\bibitem[K-S]{K-S} A.~Kotov, T.~Strobl, Generalizing Geometry - Algebroids and Sigma
Models. Handbook on Pseudo-Riemannian Geometry and
Supersymmetry, ed. V. Cortes, Chapter \textbf{7}, European
Society of Mathematics (2010).

\bibitem[LWX]{LWX} Z.-J.~Liu, A.~Weinstein, and P.~Xu, Manin triples for Lie bialgebroids. Journal of  Differential Geometry, \textbf{45}, 547--574 (1997).

\bibitem[CR]{CR}
C.~Rengifo, On higher dimensional exact Courant algebroids. Journal of Geometry and Physics. \textbf{107}, 1--14 (2016).

\bibitem[KS]{S2} D.~Kochan, P.~\v{S}evera, Differential gorms and differential worms. Preprint arXiv:math/0606645 [math.DG].


\bibitem[R]{R} D.~Roytenberg, On the structure of graded
symplectic supermanifolds and Courant algebroids. Quantization
Poisson bracket and Beyond, Manchester (2001), Th Voronov (ed.),
Contemp. Math., Vol. \textbf{315}, Amer. Math. Soc.,
Providence, RI (2002).

\bibitem[S1]{S1} P.~\v{S}evera, Some title containing the words "homotopy" and "symplectic", e.g. this one. Travaux mathematiques. Fasc. \textbf{XVI}, Univ. Luxemb. Luxembourg (2005).


\bibitem[S2]{S3} P.~\v{S}evera, Letters to Alan Weinstein about Courant algebroids. arXiv:1707.00265 [math.DG]

\bibitem[U]{U}
B.~Uribe, Group actions on dg-manifolds and Exact Courant Algebroids. Communications in Mathematical Physics, \textbf{318} (1), 35--67 (2013).

\bibitem[V1]{V1}
T.~Voronov, Graded manifolds and Drinfeld doubles for Lie bialgebroids.
In Quantization, Poisson brackets and beyond, Manchester (2001),
Th Voronov (ed.), Contemp. Math., \textbf{315}, Amer.
Math. Soc., Providence, RI, 131--168 (2002).

\bibitem[V2]{V2} T.~Voronov, Q-manifolds and Higher Analogs of Lie
Algebroids. AIP Conf.Proc., \textbf{1307}, 191--202 (2010).

\bibitem[Z]{Z}
M.~Zambon, $L_{\infty}$ algebras and higher analogues of Dirac structures and Courant
algebroids. Journal of Symplectic Geometry, \textbf{10} (4), 563--599 (2012).

\end{thebibliography}
\end{document}